\def\makeautorefname#1#2{\expandafter\def\csname#1autorefname\endcsname{#2}}
\newtheorem{theorem}{Theorem}[section]
\newtheorem{corollary}[theorem]{Corollary}
\newtheorem{lemma}[theorem]{Lemma}
\newtheorem{proposition}[theorem]{Proposition}
\theoremstyle{definition}
\newtheorem{definition}[theorem]{Definition}
\newtheorem*{thma}{Theorem A}
\newtheorem*{thmb}{Theorem B}
\newtheorem*{conjecture}{Conjecture}
\theoremstyle{definition}
\newtheorem{example}[theorem]{Example}
\newtheorem{remark}[theorem]{Remark}
\newcommand{\comments}[1]{}
\def\Z{\mathbb{Z}}
\def\C{\mathbb{C}}
\def\Q{\mathbb{Q}}
\def\R{\mathbb{R}}
\def\SS{\mathbb{S}}
\def\A{\mathcal{A}}
\def\H{\mathcal{H}}
\def\I{\mathcal{I}}
\def\lR{\mathcal{R}}
\def\U{\mathcal{U}}
\def\Fred{\text{Fred}}
\def\g{\mathfrak{g}}
\def\t{\mathfrak{t}}
\DeclareMathOperator{\Hom}{Hom}
\begin{document}

\title[Twisted equivariant K-theory of actions with maximal rank isotropy]
{Twisted equivariant K-theory of compact Lie group actions with
maximal rank isotropy}

\author[A.~Adem]{Alejandro Adem}
\address{Department of Mathematics,
University of British Columbia, Vancouver BC V6T 1Z2, Canada}
\email{adem@math.ubc.ca}

\author[J.~Cantarero]{Jos\'e Cantarero}
\address{Consejo Nacional de Ciencia y Tecnolog\'ia, 
Centro de Investigaci\'on en Matem\'aticas, A.C., Unidad M\'erida,
Parque Cient\'ifico y Tecnol\'ogico de Yucat\'an,
Carretera Sierra Papacal-Chuburn\'a Puerto Km 5.5,
Sierra Papacal, M\'erida, Yucat\'an 
CP 97302, Mexico}
\email{cantarero@cimat.mx}

\author[J.~M.~G\'omez]{Jos\'e Manuel G\'omez}
\address{Escuela de Matem\'aticas,
Universidad Nacional de Colombia, Medell\'in, AA 3840, Colombia}
\email{jmgomez0@unal.edu.co}
\thanks{The first author was partially supported by NSERC. The second author
was partially supported by CONACYT-SEP grant 242186. The third author acknowledges and thanks 
the financial support provided by the Max Planck Institute for Mathematics and by 
COLCIENCIAS through grant numbers FP44842-617-2014  
and FP44842-013-2018 of the 
Fondo Nacional de Financiamiento para la Ciencia, la Tecnolog\'ia y la Innovaci\'on.}

\begin{abstract}
We consider twisted equivariant K--theory for actions of a compact Lie group $G$
on a space $X$ where all the isotropy subgroups are connected and of maximal rank. We show that 
the associated rational spectral sequence \`a la Segal has a simple $E_2$--term 
expressible as invariants under the Weyl group of $G$. Namely, if $T$ is a maximal
torus of $G$, they are invariants of the $\pi_1(X^T)$-equivariant Bredon cohomology 
of the universal cover of $X^T$ with suitable coefficients. In the case of the inertia 
stack $\Lambda Y$ this term can be expressed using the 
cohomology of $Y^T$ and algebraic invariants associated to the Lie group and the twisting. 
A number of calculations are provided. In particular, we recover the rational Verlinde
algebra when $Y=\{*\}$.
\end{abstract}

\maketitle

\section{Introduction}
Let $G$ denote a compact Lie group with torsion--free fundamental group acting on a space $X$ 
such that all the isotropy subgroups are connected and contain a maximal torus for $G$, i.e., 
they have maximal rank. Let $T\subset G$ denote a maximal torus with Weyl group $W=N_G(T)/T$. 
In \cite{AG} it was shown that if the fixed--point set $X^T$ has the homotopy type of a finite
$W$--CW complex, then the rationalized complex equivariant K--theory of $X$
is a free module over the representation ring of $G$ of rank equal to 
$\sum_{i\ge 0} \textrm{dim}_{\Q} H^i(X^T;\Q)$. Moreover, assuming that every isotropy 
subgroup has torsion--free fundamental group, it was shown 
that if $\Lambda X$ denotes the \textsl{inertia stack} for the $G$--space $X$,
then $K_G^*(\Lambda X)\otimes\Q$ is a free module over $R(G)\otimes\Q$ of rank equal to
$2^r (\sum_{i\ge 0} \textrm{dim}_{\Q} H^i(X^T;\Q))$, where $r$ denotes the rank of the group $G$. 
The proofs of these results are based on 
the collapse over $\Q$ of a spectral sequence arising from a skeletal filtration of $X$ which was
first introduced by Segal \cite{Segal}. 

Based on this, a natural question arises: given a similar
hypothesis of maximal rank isotropy and a suitable
twisting $P$, under what conditions can we use analogous spectral sequence methods to
effectively compute $^PK^*_G(X)\otimes\Q$ or $^PK^*_G(\Lambda X)\otimes\Q$? 
In this paper we
provide a partial affirmative answer to this question. 

\begin{thma}
\textsl{Suppose $G$ is a compact Lie group acting on a space $X$ with 
connected maximal rank isotropy subgroups and with a fixed point $x_{0}$. 
Let $p\colon P\to X$ be a $G$-equivariant principal $PU(\H)$-bundle 
whose restriction to $x_{0}$ is trivial. 
Assume that $X^{T}$ is a $W$-CW complex of finite type.  Then there is a  
spectral sequence with $E_{2}$-term given by 
\begin{equation*}
E_{2}^{p,q}= \left\{
\begin{array}{ccc}
H^{p}_{\pi_{1}(X^{T})}(\widetilde{X^{T}};\lR_{\Q})^{W} 
&\text{ if } & q \text{ is even},\\
0& \text{ if } & q \ \ \text{ is odd}
\end{array}
\right.
\end{equation*}
converging to ${}^{P}K^{*}_{G}(X)\otimes \Q$.}
\end{thma}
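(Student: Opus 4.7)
The plan is to adapt the untwisted argument of \cite{AG} by running Segal's skeletal-filtration spectral sequence for twisted equivariant K-theory and simplifying the $E_{1}$-page using the maximal-rank isotropy hypothesis.

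First I would equip $X$ with a $G$-CW structure compatible with the given $W$-CW structure on $X^{T}$. The key geometric identification, used throughout \cite{AG}, is that for any connected maximal-rank subgroup $H\subset G$ one has $(G/H)^{T}=N_{G}(T)/N_{H}(T)=W/W_{H}$, so $W$-cells of $X^{T}$ correspond bijectively to $G$-cells of $X$ of the form $G\times_{H}D^{p}$. Applying Segal's construction to ${}^{P}K^{*}_{G}$ then produces a conditionally convergent spectral sequence $E_{1}^{p,q}\Rightarrow {}^{P}K^{p+q}_{G}(X)\otimes\Q$, with $E_{1}^{p,q}$ a direct sum of one summand per $G$-cell of dimension $p$.

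Next I would compute the $E_{1}$-page. For a cell $G\times_{H}D^{p}$, the twisted K-theory contribution is, up to suspension, $R^{\tau_{H}}(H)\otimes\Q$ in even $q$ and zero in odd $q$, where $\tau_{H}$ is the twisting inherited from $P$ at the cell. Because $P$ is trivialized at $x_{0}$ and $X$ is path-connected, $\tau_{H}$ is controlled by a well-defined equivariant class per cell. Since $H\supset T$ is connected with torsion-free $\pi_{1}$, a Weyl-reduction argument (parallel to the identification $R(H)\cong R(T)^{W_{H}}$ used in \cite{AG}) gives the rational isomorphism $R^{\tau_{H}}(H)\otimes\Q\cong (R^{\tau_{T}}(T)\otimes\Q)^{W_{H}}$, reducing everything to $T$-level data together with a $W_{H}$-action.

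I would then reorganize the $E_{1}$-complex using the cell bijection into a $W$-equivariant cellular cochain complex on $X^{T}$ whose coefficients form a sheaf of rationalized twisted $T$-representation rings; the twisting's monodromy around loops in $X^{T}$ upgrades this to a $\pi_{1}(X^{T})$-equivariant sheaf $\lR_{\Q}$ on the universal cover $\widetilde{X^{T}}$. Since $W$ is finite and we work over $\Q$, taking $W$-invariants is exact and commutes with passage to cohomology, producing the claimed $E_{2}$-page. The main obstacle will be this final identification: extracting from the equivariant $PU(\H)$-bundle $P$ the correct local coefficient system $\lR_{\Q}$ with compatible $\pi_{1}(X^{T})$- and $W$-actions, and verifying that the reorganized Segal complex really computes $\pi_{1}(X^{T})$-equivariant Bredon cohomology of $\widetilde{X^{T}}$. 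Convergence is routine given the finite-type hypothesis on $X^{T}$, and the rational Weyl-invariants step follows the pattern of \cite{AG}.
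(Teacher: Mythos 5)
Your outline follows the same overall architecture as the paper's argument (localize over cells/slices with maximal rank isotropy, identify each local twisted K-group with invariants of a twisted representation ring of $T$, reassemble into a Bredon cochain complex on $\widetilde{X^{T}}$, and pass to $W$-invariants rationally), with the cosmetic difference that you use the skeletal filtration where the paper uses a $G$-equivariant good cover by contractible slices; either filtration yields the required $E_{1}$-page. The rational $W$-invariance step at the end is also fine: over $\Q$ with $W$ finite, invariants are exact, which is exactly how the paper converts $H^{*}_{\widetilde{W}}(\widetilde{X^{T}};\lR_{\Q})$ into $H^{*}_{\pi_{1}(X^{T})}(\widetilde{X^{T}};\lR_{\Q})^{W}$.

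However, the step you defer as ``the main obstacle'' is not a technicality to be checked at the end; it is the mathematical content of the theorem, and your proposal does not supply it. Concretely, two things are missing. First, you never construct the action of $\pi_{1}(X^{T})$ on $R(T)$ that defines $\lR_{\Q}$. The paper obtains it by assembling, from the local trivializations of $P$ over slices, a locally trivial bundle $L\to X^{T}$ whose fiber at $x$ is the maximal torus $\widetilde{T}_{x}$ of the central extension $\widetilde{G}_{x}$; since each extension $\SS^{1}\to\widetilde{T}_{x}\to T$ is trivializable ($H^{3}(BT;\Z)=0$), the holonomy of $L$ around loops gives a $W$-equivariant homomorphism $\phi_{P}\colon\pi_{1}(X^{T})\to\Hom(T,\SS^{1})$, and it is this monodromy (nontrivial in general even though each fiber is trivializable) that twists the coefficients. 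Without this construction the statement of the theorem is not even well posed. Second, and more seriously, you must show that your cellwise identifications $R^{\tau_{H}}(H)\otimes\Q\cong (R^{\tau_{x}}(T)\otimes\Q)^{W_{x}}$ are \emph{natural} with respect to the attaching/restriction maps, so that the reorganized $E_{1}$-complex is literally the $\widetilde{W}$-equivariant Bredon complex of $\widetilde{X^{T}}$ with coefficients in $\lR$. This requires choosing, for each cell, a lift $\tilde{x}$ to the universal cover and a path from the basepoint, showing the resulting isomorphism $R^{\tau_{x}}(T)^{W_{x}}\cong R(T)^{\widetilde{W}_{\tilde{x}}}$ is independent of these choices, and verifying equivariance for the isotropy group $\widetilde{W}_{\tilde{x}}\subset\pi_{1}(X^{T})\rtimes W$, whose elements are pairs $(a_{w},w)$ determined by a cocycle $w\mapsto a_{w}$. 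This is the content of the paper's Theorem 3.7 and is where the twisting genuinely interacts with the topology of $X^{T}$; a proof that flags it as an obstacle but does not carry it out has not proved Theorem A. (A minor additional point: the isomorphism $R^{\tau_{H}}(H)\otimes\Q\cong R^{\tau}(T)^{W_{H}}\otimes\Q$ needs only that $H$ is connected of maximal rank; the torsion-free $\pi_{1}(H)$ hypothesis you invoke is not part of Theorem A and is not needed here.)
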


Here $\lR_{\Q}$ denotes the coefficient system for Bredon cohomology defined
on the orbit category of $\pi_1(X^T)$ by $\pi_1(X^T)/V\mapsto (R(T)\otimes\Q)^V$ and 
$\widetilde{X^{T}}$ denotes the universal cover of $X^{T}$. This coefficient system is induced by 
an action of $\pi_1(X^T)$ on $R(T)$ that is constructed in Proposition \ref{pi1action} 
using the $G$-equivariant principal $PU(\H)$-bundle $p\colon P\to X$. 

The case of the inertia stack is especially interesting since $\Lambda (\{*\})\cong G$ with
the conjugation action. In particular, any $G$-space $X$ is equipped with a $G$-equivariant map 
$\Lambda X \to G$. The isomorphism classes of $G$-equivariant principal $PU(\H)$-bundles over a 
$G$-space $Y$ are in bijective correspondence with $H^3_G(Y;\Z)$. When $G$ is a compact, simple 
and simply connected Lie group, it is well known that $H^3_G(G;\Z) \cong \Z$. After fixing
a generator, pullback with respect to the $G$-equivariant map $\Lambda X \to G$ determines a 
$G$-equivariant $PU(\H)$-bundle $Q_n \to \Lambda X$ for each integer $n$. In this case we obtain 
the following.

\begin{thmb}
\textsl{Let $G$ be a compact, simple and simply connected Lie group of rank equal to $r$ and 
$n\ne 0$  an integer. Suppose that $X$ is a compact $G$-CW complex such that $G_{x}$ is a 
connected subgroup of maximal rank that has torsion free fundamental group for every 
$x\in X$ and that there is a point fixed by the action of $G$. 
Then the $E_{2}$-term in the spectral sequence computing 
${}^{Q_{n}}K_{G}^{*}(\Lambda X)\otimes \Q$ is given by 
\begin{equation*}
E_{2}^{p+r,q}= \left\{
\begin{array}{ccc}
\left[ H^{p}(X^{T};\Q)\otimes (R(T)^{sgn}\otimes \Q/J_{n}) \right]^{W}
&\text{ if } &p\ge 0\text{ and } q \text{ is even},\\
0& \text{ if } &q \ \ \text{ is odd.}
\end{array}
\right.
\end{equation*}} 
\end{thmb}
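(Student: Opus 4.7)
The strategy is to apply Theorem A to the $G$-space $\Lambda X$ with the bundle $P=Q_{n}$, and to identify each ingredient of the resulting $E_{2}$-term. First I would compute $(\Lambda X)^{T}$. A pair $(g,x)\in \Lambda X\subset G\times X$ is $T$-fixed exactly when $g$ lies in the centralizer $Z_{G}(T)$ and $x\in X^{T}$; the condition $gx=x$ is then automatic. Since $G$ is simply connected, $Z_{G}(T)=T$, so $(\Lambda X)^{T}\cong T\times X^{T}$. Consequently $\pi_{1}((\Lambda X)^{T})\cong \Z^{r}\times \pi_{1}(X^{T})$ and the universal cover is $\t\times \widetilde{X^{T}}$, with the $\Z^{r}$-factor acting freely on $\t$.

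Next, I would identify the $\pi_{1}((\Lambda X)^{T})$-action on $R(T)$ defining the coefficient system $\lR_{\Q}$ of Theorem A. The bundle $Q_{n}$ is pulled back from $G$ along the first projection $\Lambda X\to G$, whose restriction to the fixed locus is the projection $T\times X^{T}\to T$. Hence the $\pi_{1}(X^{T})$-factor acts trivially on $R(T)$, while the $\Z^{r}=\pi_{1}(T)$-factor acts via the level-$n$ twist---the standard action whose coinvariants give the Verlinde-type quotient $R(T)\otimes\Q/J_{n}$. Because the $\pi_{1}$-action on the universal cover is free, all stabilizers appearing in the orbit category are trivial, so the Bredon cohomology reduces to ordinary group cohomology of $\pi_{1}((\Lambda X)^{T})$ with coefficients in this twisted module. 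A K\"unneth decomposition, using triviality of the $\pi_{1}(X^{T})$-factor on $R(T)$, then yields
\[
H^{*}_{\pi_{1}((\Lambda X)^{T})}\bigl(\widetilde{(\Lambda X)^{T}};\lR_{\Q}\bigr)\;\cong\; H^{*}(\Z^{r};R(T)\otimes\Q)\,\otimes\, H^{*}(X^{T};\Q).
\]

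The core computational lemma is that, for $n\ne 0$, the twisted cohomology $H^{*}(\Z^{r};R(T)\otimes\Q)$ is concentrated in top degree $r$ and is isomorphic to $R(T)\otimes\Q/J_{n}$. This follows by decomposing $R(T)\otimes\Q$ into $\Z^{r}$-isotypic components: on every nontrivial isotypic summand each generator of $\Z^{r}$ acts by a scalar $\ne 1$, so $(g-1)$ is invertible and the cohomology of that summand vanishes, while the surviving piece assembles via Poincar\'e duality for $\Z^{r}$ into the claimed quotient. Finally, I would take $W$-invariants. The Weyl group acts on $H^{r}(\Z^{r};\Q)\cong \Lambda^{r}\t^{*}$ by the sign character (since $W$ acts on the coroot lattice with determinant $sgn$), so tensoring this factor with $R(T)\otimes\Q/J_{n}$ and passing to $W$-invariants replaces $R(T)$ by $R(T)^{sgn}$. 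The shift $p\mapsto p+r$ is simply the contribution of the top degree from $\Z^{r}$. Combining these yields the stated formula
\[
E_{2}^{p+r,q}=\bigl[H^{p}(X^{T};\Q)\otimes(R(T)^{sgn}\otimes\Q/J_{n})\bigr]^{W}
\]
for $p\ge 0$ and $q$ even. The main obstacle will be the precise identification of the level-$n$ $\pi_{1}(T)$-action on $R(T)$ with the ideal $J_{n}$ as defined in the paper, and careful verification that the K\"unneth splitting is $W$-equivariant so that the sign twist lands on the intended tensor factor.
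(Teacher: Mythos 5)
Your architecture is exactly the paper's: identify $(\Lambda X)^{T}\cong T\times X^{T}$ (your computation of the fixed points, using $Z_{G}(T)=T$, is correct), note that $\phi_{Q_{n}}$ factors through the projection to $\pi_{1}(T)=\Lambda$ so that $\pi_{1}(X^{T})$ acts trivially on $R(T)$, apply a K\"unneth decomposition, show that the $\t$-factor contributes a module concentrated in degree $r$ isomorphic to $R(T)^{sgn}_{\Q}/J_{n}$, and take $W$-invariants via Theorem \ref{Bredon rational}. The sign twist coming from $\bigwedge^{r}\t^{*}$ is also identified correctly and matches the paper's observation that $\Hom_{\Z[\Lambda]}(C_{r}(\t),R(T))\cong R(T)^{sgn}$ is the $W$-equivariant identification.

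There is, however, a genuine gap in your justification of the core lemma. You propose to compute $H^{*}(\Z^{r};R(T)\otimes\Q)$ by decomposing $R(T)\otimes\Q$ into $\Z^{r}$-isotypic components on which each generator acts by a scalar $\ne 1$, so that $g-1$ is invertible. This step fails: the action of $K_{\alpha_{i}}\in\Lambda$ on $R(T)_{\Q}=\Q[\Pi]$ is multiplication by the group-like element $\theta_{\alpha_{i}}^{nd_{i}}$, which \emph{permutes} the character basis $\{\theta_{f}\}_{f\in\Pi}$ rather than acting by scalars; multiplication by a nontrivial monomial in a Laurent polynomial ring has no eigenvectors, so there is no isotypic decomposition of the kind you describe. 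The correct argument (and the paper's, Theorem \ref{ideal-J}) is that for $n\ne 0$ the homomorphism $\phi_{n}\colon\Lambda\to\Pi$ is injective with finite cokernel, so the elements $\theta_{\alpha_{i}}^{nd_{i}}-1$ form a regular sequence on $R(T)_{\Q}$ (equivalently, $R(T)_{\Q}$ is a finite free $\Q[\Lambda]$-module); the Koszul complex then shows the cohomology vanishes below degree $r$ and equals the coinvariants $R(T)_{\Q}/J_{n}$ in degree $r$. Separately, the precise formula $\phi_{n}(K_{\alpha_{i}})=\theta_{\alpha_{i}}^{nd_{i}}$ — which is what makes the coinvariants equal to $R(T)_{\Q}/J_{n}$ with the paper's definition of $J_{n}$ — is not "standard" but is the content of Proposition \ref{Inducedhom}, proved by comparing the restriction of the class $n\eta$ to $T$ with the basic inner product; you correctly flag this as outstanding, but without it the identification of the twist is incomplete.
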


In the above theorem $R(T)^{sgn}\otimes \Q$ denotes $R(T) \otimes \Q$ with the action of 
$W$ given by
\[ w \bullet x = (-1)^{\ell(w)} w \cdot x, \]
where $w \cdot x$ denotes the usual action of $W$ on $R(T) \otimes \Q$ and $\ell(w)$ denotes
the length of $w$. Also, if $\{\alpha_{1},\dots, \alpha_{r}\}$ denotes a set of simple roots in the 
corresponding root system, then $J_{n}$ denotes the ideal in $R(T)^{sgn}$ generated by the elements 
$\theta_{\alpha_{i}}^{nd_{i}}-1$ for $1\le i\le r$, where $\theta_{\alpha_{i}}$ denotes 
the global root associated to $\alpha_{i}$ and $d_{i}$ is an integer. 
(See Section \ref{Section 4} for the details).

For the particular case of $X=\{*\}$ the previous theorem shows that the $E_{2}$-term in the 
spectral sequence computing ${}^{Q_{n}}K^{p}_{G}(G)\otimes \Q$ is such that 
$E_{2}^{r,q}=(R(T)^{sgn}\otimes \Q/J_{n})^{W}$ for $q$ even and $0$ in other cases. Therefore 
the spectral sequence collapses at the $E_{2}$-term for trivial reasons in this case. 
Moreover, if  $k\ge 0$ is an integer such that $k=n-h^{\vee}$, where $h^{\vee}$ 
is the dual Coxeter number of the group $G$, then we show that $(R(T)^{sgn}\otimes \Q/J_{n})^{W}$ 
can be identified with the rational Verlinde algebra 
$V_{k}(G)_{\Q}:= (R(G)\otimes \Q )/I_{k}$ at level $k=n-h^{\vee}$.  
In particular, when $p$ has the same parity as the rank of the group, 
we conclude that ${}^{Q_{n}}K^{p}_{G}(G)\otimes \Q$ is isomorphic 
as a module over $R(G)\otimes \Q$ to the Verlinde algebra $V_{k}(G)_{\Q}$ at 
level $k=n-h^{\vee}$, and we recover the 
celebrated result due to Freed--Hopkins--Teleman (see \cite[Theorem 1]{FHTIII}). 
We are also able to provide a complete calculation for the inertia space $\Lambda \SS^{\g}$, 
where $\SS^{\g}$ denotes the one point compactification of the Lie algebra $\g$ (see 
Subsection \ref{inertia sphere}). We observe that in these examples the spectral 
sequence collapses; we provide a number of other examples where this also holds, leading
us to formulate:

\begin{conjecture}
Under the hypotheses of Theorem B, the spectral sequence for computing 
$^{Q_n}K_G^*(\Lambda X)\otimes\Q$ always collapses at the $E_2$ level.
\end{conjecture}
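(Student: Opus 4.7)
The plan is to prove collapse by exhibiting enough permanent cocycles to cover the $E_{2}$-term. A parity argument provides a first reduction: since $E_{2}^{p+r,q}$ vanishes for odd $q$, every even differential is automatically zero, and collapse reduces to showing that $d_{2k+1}=0$ for all $k\geq 1$. Because the Atiyah--Hirzebruch--Segal spectral sequence arising from the $G$-skeletal filtration of $\Lambda X$ is multiplicative with respect to the cup product on twisted equivariant K-theory, it is enough to exhibit a set of multiplicative generators for the $E_{2}$-term consisting of permanent cocycles.

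I would then attack the two tensor factors of $E_{2}$ separately. For the Verlinde factor $(R(T)^{sgn}\otimes\Q/J_{n})^{W}$: because $X$ has a $G$-fixed point, the constant map $c\colon X\to\{*\}$ induces $\Lambda c\colon \Lambda X\to \Lambda\{*\}=G$, and by construction $Q_{n}=(\Lambda c)^{*}\tau_{n}$ with $\tau_{n}$ the level-$n$ twist on $G$. Pullback yields a ring homomorphism ${}^{\tau_{n}}K^{*}_{G}(G)\otimes\Q\to {}^{Q_{n}}K^{*}_{G}(\Lambda X)\otimes\Q$ compatible with the spectral sequences. The source collapses trivially (only one column is nonzero, as already observed in the paper), so the whole Verlinde algebra is realized by permanent cocycles, filling the column $p=0$ of the target $E_{2}$. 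For the cohomology factor $H^{p}(X^{T};\Q)^{W}$: under the maximal-rank isotropy hypothesis one has the classical identification $H^{*}_{G}(X;\Q)\cong H^{*}(X^{T};\Q)^{W}$, and these classes lift via the equivariant Chern character to $K_{G}^{*}(X)\otimes\Q$, whose own Atiyah--Hirzebruch--Segal spectral sequence is known to collapse by the main theorem of \cite{AG}. Pulling these classes back along the natural map $\Lambda X\to X$ and multiplying by the Verlinde classes above then produces permanent cocycles throughout the $E_{2}$-page.

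The principal obstacle is that $W$-invariants in a tensor product are \emph{not} in general generated by tensor products of $W$-invariants, since the sign-twisted $W$-action on $R(T)^{sgn}\otimes\Q/J_{n}$ mixes the two factors nontrivially. To overcome this, I would work first with the analogous $T$-equivariant spectral sequence for ${}^{Q_{n}}K^{*}_{T}(\Lambda X)\otimes\Q$, whose $E_{2}$-term is the full tensor product $H^{p}(X^{T};\Q)\otimes R(T)^{sgn}\otimes\Q/J_{n}$ with no $W$-invariants taken, and argue collapse there directly by combining the multiplicative permanent cocycle argument above with freeness of the relevant $R(T)\otimes\Q$-modules. Descending to $G$ via the averaging identification $[{}^{Q_{n}}K^{*}_{T}(\Lambda X)\otimes\Q]^{W}\cong {}^{Q_{n}}K^{*}_{G}(\Lambda X)\otimes\Q$---which one expects to extend to the twisted setting under the torsion-freeness hypothesis on the isotropy fundamental groups---would then yield the $G$-equivariant collapse. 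Verifying that this $W$-action on the $T$-spectral sequence is strict and compatible with the ideal $J_{n}$, together with a sufficiently functorial construction of the Verlinde permanent cocycles at the $T$-level, is the technical heart of the argument and, I expect, the main obstacle to a complete proof.
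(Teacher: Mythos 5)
The statement you are addressing is stated in the paper only as a \emph{conjecture}: the authors verify collapse in a handful of examples (a point, the inertia sphere $\Lambda\SS^{\g}$, inertia of commuting varieties, commuting pairs in $SU(2)$), in each case by ad hoc comparison with the one-column spectral sequence for $G$ acting on itself, and they explicitly leave the general case open. Your text is a plan rather than a proof, and the two places where you yourself flag difficulties are exactly where it breaks down. First, the multiplicative step: ${}^{Q_n}K_G^*(\Lambda X)$ is not a ring (products of twisted classes add twists), so its spectral sequence is at best a \emph{module} over the untwisted spectral sequence for $K_G^*(\Lambda X)\otimes\Q$, and "multiplicative generators that are permanent cocycles" must be replaced by "module generators over permanent cocycles of the untwisted sequence." With that correction, the classes you construct are products of elements of $(R(T)^{sgn}_{\Q}/J_n)^W$ (pulled back from $G$; note these sit in column $p=r$, not $p=0$) with classes pulled back from $K_G^*(X)\otimes\Q$. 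These do not generate the $E_2$-page: as you note, $[A\otimes B]^W$ is not spanned by products of $A^W$ and $B^W$, and the paper's own computations make the failure concrete --- for $\Lambda\SS^{\g}$ one has $E_2^{2r,q}\cong(\Q^{sgn}\otimes R(T)^{sgn}_{\Q}/J_n)^W\cong (R(T)_{\Q}/K_n)^W\neq 0$ even though $H^r(\SS^{\t};\Q)^W=(\Q^{sgn})^W=0$, so your permanent cocycles miss that entire column.

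Second, the $T$-equivariant detour you propose to repair this does not have the $E_2$-term you assign to it. The restricted $T$-action on $\Lambda X$ does not have maximal rank isotropy as a $T$-space (that would force the action to be trivial), so neither Theorem \ref{ss rational coeff} nor Theorem \ref{TheoremE2-term} applies to ${}^{Q_n}K_T^*(\Lambda X)$, and the identification of its $E_2$-term with the full tensor product $H^p(X^T;\Q)\otimes R(T)^{sgn}_{\Q}/J_n$ is unsupported: strata with isotropy smaller than $T$ contribute. The descent isomorphism $\bigl[{}^{Q_n}K_T^*(\Lambda X)\otimes\Q\bigr]^W\cong{}^{Q_n}K_G^*(\Lambda X)\otimes\Q$ is likewise only asserted, not established, in the twisted setting. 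Until both points are supplied, together with an actual argument that the putative $T$-equivariant sequence collapses, the proposal does not settle the conjecture, which remains open.
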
 

\medskip

This paper is organized as follows: in \S 2 we provide a detailed definition of twisted 
equivariant K--theory; in \S3 we define the spectral sequence \`a la Segal that will be used in 
our calculations; in \S 4 we discuss facts from root systems and cohomology that are required 
to provide a description of the $E_2$--term in the applications;
in \S5 we apply our approach to inertia spaces, providing a number of explicit examples; and 
finally in \S6 we provide an appendix on the Verlinde algebra where we explain how it can be 
identified in terms of the invariants from our calculations. We are grateful to the referee for providing
very helpful comments and suggestions.

\section{Definition of Twisted Equivariant K-theory}

In this section we briefly review the definition of twisted equivariant 
K-theory that we will use throughout this article. We remark that in this work all spaces in sight 
are $G$-CW complexes unless otherwise specified and $G$ denotes a compact Lie group.

We start by recalling the definition of $G$-equivariant principal 
$\Pi$-bundles. 

\begin{definition}
Suppose that $G$ is a compact Lie group and let $\Pi$  be a topological group. A 
$G$-equivariant principal $\Pi$-bundle consists of a $G$-equivariant map $p\colon E\to X$, 
where $G$ is acting on the left on both $E$ and $X$, together with 
a right action of $\Pi$ on $E$ that commutes with the left $G$-action in such a 
way that the map $p\colon E\to X$ is a principal $\Pi$-bundle. 
\end{definition}

Suppose that $X$ is a $G$-space. Let $U_{x}$ be a $G_{x}$-invariant open 
neighborhood of $x$. Recall that $U_{x}$ is called a slice through $x$ 
if the map 
\begin{align*}
\mu \colon G\times_{G_{x}}U_{x}\to X\\
[g,y]\mapsto gy
\end{align*} 
is a homeomorphism onto $GU_{x}$. In this case we can identify equivariantly 
the tube $GU_{x}$ with $G\times_{G_{x}}U_{x}$. As in \cite{Lashof}
the $G$-equivariant principal $\Pi$-bundles that we work 
with will satisfy the following local triviality condition.

\begin{definition}
A $G$-equivariant principal $\Pi$-bundle  $p\colon E\to X$ is $G$-locally trivial if for every 
$x\in X$ we can find a $G_{x}$-invariant open slice $U_{x}$ through $x$ and a 
local trivialization $\varphi_{x}\colon p^{-1}(GU_{x})\to G\times_{G_{x}} (U_{x}\times \Pi)$ 
of $p$ as a  principal $\Pi$-bundle which is also $G$-equivariant. 
Here the action of $G_{x}$ on $ U_{x}\times \Pi$  is given by:
\[
h\cdot (y, \sigma):=(hy,\gamma_{x}(h)\sigma)
\]
for a fixed continuous homomorphism
$\gamma_{x}\colon G_{x}\to \Pi$.
\end{definition}

In the above definition the homomorphism $\gamma_{x}$ is called a local representation 
and it is well defined up to conjugation. Such local representations 
can be constructed directly from the bundle $p\colon E\to X$ as follows. 
Assume that $x\in X$ and fix an element $e_{x}\in E_{x}$. For every $g\in G_{x}$ we can write 
$g\cdot e_{x}=e_{x}\cdot \gamma_{x}(g)$ for a unique $\gamma_{x}(g)\in \Pi$. 
The assignment $g\mapsto  \gamma_{x}(g)$  defines a local representation and 
is independent of the choice of $e_{x}$ up to conjugation. Such local representations 
play a key role in twisted equivariant K-theory as we shall see later on.
 
In this work we are mainly interested in the particular case where $\Pi =PU(\H)$ for a 
suitable Hilbert space $\H$, as such bundles can be used to twist equivariant K-theory. 
Fix a compact Lie group $G$ acting continuously on a space $X$. Let $\H$ be a separable 
infinite-dimensional Hilbert space. Denote by  $\Fred^{(0)}(\H)$ the space of 
self-adjoint degree one Fredholm operators $F\colon \H\oplus \H \to \H\oplus \H$ 
such that $F^{2}-I$ is a compact operator with the topology described in  
\cite[Definition 3.2]{AS}. The group $PU(\H)$ endowed with the compact-open topology 
acts continuously on $\Fred^{(0)}(\H)$ by conjugation and we have a central extension 
\[
1\to \SS^{1}\to U(\H)\to PU(\H)\to 1.
\]
If $K\subset G$ is a closed subgroup and $\gamma\colon K\to PU(\H)$ is a homomorphism, 
via pullback we obtain a compact Lie group $\widetilde{K}:=\gamma^{*}U(\H)$ that fits 
into a central extension making the following diagram commutative 
\[
\xymatrix{
1\ar[r]&\SS^1 \ar[r] \ar[d]^{\text{id}} & \widetilde{K} 
\ar[r] \ar[d]^{\widetilde{\gamma}} & K \ar[d]^{\gamma}\ar[r]& 1 \\
1\ar[r]& \SS^1 \ar[r]& U(\H) \ar[r]  & PU(\H)\ar[r]& 1.
}
\] 
Using the homomorphism $\widetilde{\gamma}\colon \widetilde{K}\to U(\H)$, we can see 
$\H$ as a $\widetilde{K}$-representation in such a way that the central circle $\SS^{1}$ 
acts by multiplication of scalars.  We say that $\gamma$ is a stable homomorphism 
if $\H$ contains infinitely many copies of all the irreducible representations of 
$\widetilde{K}$ on which $\SS^{1}$ acts by multiplication of scalars.
Following \cite[Definition 3.1]{BEJU} we have the next definition.

\begin{definition}
A $G$-stable principal $PU(\H)$-bundle is a $G$-equivariant principal $PU(\H)$-bundle 
$p\colon P\to X$ that is $G$-locally trivial and such that for every $x\in X$ the 
local representation $\gamma_{x}\colon G_{x}\to PU(\H)$ is a stable homomorphism.
\end{definition}

Given a  $G$-equivariant principal $PU(\H)$-bundle 
$p\colon P\to X$ we can associate to it a $G$-equivariant bundle of Fredholm operators 
$\Fred^{(0)}(P)$ over $X$ defined by 
\[
\pi\colon \Fred^{(0)}(P):=P\times_{PU(\H)}\Fred^{(0)}(\H)\to X.
\]
The bundle $\Fred^{(0)}(P)$ has a basepoint in each fiber. For each positive integer $n$, 
let $\Fred^{(-n)}(P)$ be the fiberwise $n$-th loop space of $\Fred^{(0)}(P)$.

\begin{definition}
Suppose that $p\colon P\to X$ is a $G$-stable $G$-equivariant principal 
$PU(\H)$-bundle and $n \geq 0$ is an integer. The $(-n)$-th $P$-twisted 
$G$-equivariant K-theory of $X$, denoted by ${}^{P}K_{G}^{-n}(X)$, 
is defined to be the group of $G$-equivariant homotopy classes of $G$-equivariant sections of 
$\pi\colon \Fred^{(-n)}(P)\to X$.
\end{definition}

Since there is a fiberwise $G$-homotopy equivalence $\Fred^{(-n)}(P) \to \Fred^{(-n-2)}(P)$, 
we can extend this definition to positive integers in a natural way.

\begin{example}
Suppose that $G$ is a compact Lie group and that $X=\{*\}$ is a point. 
Let $P\to \{*\}$ be a $G$-stable $G$-equivariant principal 
$PU(\H)$-bundle. Fix some element
$e\in P=PU(\H)$ and let $\gamma\colon G\to PU(\H)$ be the local representation 
obtained using the element $e$ as explained above. 
The homomorphism $\gamma$ determines the  $G$-equivariant principal 
$PU(\H)$-bundle $P$ up to isomorphism. In this case the associated bundle of Fredholm operators  
\[ \Fred^{(0)}(P):=P\times_{PU(\H)}\Fred^{(0)}(\H)\cong PU(\H)\times_{PU(\H)}\Fred^{(0)}(\H) \]
can be identified with $\Fred^{(0)}(\H)$. With this identification the action of 
$G$ on $\Fred^{(0)}(\H)$ is obtained via the homomorphism $\gamma\colon G\to PU(\H)$ 
and the conjugation action of $PU(\H)$ on $\Fred^{(0)}(\H)$. Also, a 
$G$-equivariant section of the bundle $\Fred^{(0)}(P)\to \{*\}$ corresponds precisely to an 
element in $\Fred^{(0)}(\H)^{G}$. By definition ${}^{P}K_{G}^{0}(\{*\})$ corresponds to the group 
$\pi_{0}(\Fred^{(0)}(\H)^{G})$.

Let $\widetilde{G}$ be the pullback of $U(\H)\to PU(\H)$ along $\gamma$ so that we 
have a central extension
\[
1\to \SS^{1}\to \widetilde{G}\stackrel{\tau} \to G\to 1.
\]
The group $\widetilde{G}$ comes equipped with a homomorphism 
$\widetilde{\gamma}\colon \widetilde{G}\to U(\H)$ that covers $\gamma$.
As explained before, $\H$ is a $\widetilde{G}$ representation in such a way that the central 
circle $\SS^{1}$ acts by multiplication of scalars. With this action the group $\widetilde{G}$ 
acts on $\Fred^{(0)}(\H)$ in such a way that the central circle $\SS^{1}$ acts trivially.  
Moreover, the space $\Fred^{(0)}(\H)^{\widetilde{G}}$ is equivalent 
to the space $\Fred^{(0)}(\H)^{G}$. Since the homomorphism $\gamma$ is stable 
the group $\pi_{0}(\Fred^{(0)}(\H)^{G})$ can be identified with 
the Grothendieck group of all complex representations of $\widetilde{G}$ 
on which the central $\SS^{1}$ acts by multiplication of scalars. This is by definition 
the $\tau$-twisted complex representation ring of $G$ that we denote by  $R^{\tau}(G)$. 
We conclude that  ${}^{P}K_{G}^{0}(\{*\})\cong R^{\tau}(G)$.

In this case we also have that $\Fred^{(1)}(P)$ is $G$-homotopy equivalent to 
$\Omega \Fred^{(0)}(\H)$, from here we see ${}^P K_G^1(\{ *\})$ as a direct summand of 
$K_{\tilde{G}}^1(\{ *\})$, which is known to vanish. Hence ${}^P K_G^1(\{ * \}) = 0$.

Suppose now that $G$ is connected. 
Fix $T\subset G$ a maximal torus and let $W=N_{G}(T)/T$ be the corresponding Weyl group. 
Consider the central extension 
\[
1\to \SS^{1}\to \widetilde{G}\stackrel{\tau} \to G\to 1
\]
associated to the bundle $P\to \{*\}$ as above. Let $\widetilde{T}=\tau^{-1}(T)$, a maximal 
torus in $\widetilde{G}$. The space $\widetilde{T}$ also fits into a central extension 
\[
1\to \SS^{1}\to \widetilde{T}\stackrel{\tau}{\rightarrow} T\to 1.
\]
The Weyl group of $\widetilde{G}$ can be identified with $W$ and thus 
$W$ acts naturally on $R^{\tau}(T)$. In a similar way as in the case of 
untwisted K-theory we have a natural isomorphism $R^{\tau}(G)\cong R^{\tau}(T)^{W}$. 
Therefore when $G$ is a compact, connected Lie group 
we have a natural isomorphism  
\[
{}^{P}K_{G}^{0}(\{*\})\cong R^{\tau}(T)^{W}.
\] 
\end{example}

In a similar way as was done in \cite[Proposition 6.3]{AS} we can associate 
to each $G$-equivariant principal $PU(\H)$-bundle $p\colon P\to X$  an equivariant cohomology 
class $\eta_{P}\in H^{3}_{G}(X;\Z)$. This assignment defines a one-to-one correspondence 
between isomorphism classes of $G$-equivariant principal $PU(\H)$-bundles over $X$ 
and cohomology classes in $H^{3}_{G}(X;\Z)$. 

\section{A twisted spectral sequence for actions with maximal rank isotropy}

In this section we study a spectral sequence for twisted equivariant K-theory
analogous to the classical Segal spectral sequence \cite{Segal}
for equivariant K--theory associated to an appropriate covering. It is a formal 
consequence of \cite[Theorem 22.4.4]{MS}. Variations on this spectral sequence appear in 
\cite{BEJU}, \cite{Douglas}, \cite{Dwyer} and \cite{FHT}. 

We will show that in the particular case when we have actions of compact Lie groups with the 
property that all the isotropy groups are connected, of maximal rank, the $E_2$--term can be
succinctly described using Bredon cohomology with suitable coefficients, in a manner analogous 
to what holds in the untwisted case (see \cite{AG}).

We start by setting up some notation that will be used throughout this section. 
Fix  a compact, connected 
Lie group $G$ and $T$ a maximal torus in $G$.
Let  $W=N_{G}(T)/T$ be the corresponding Weyl group. We will assume 
that $G$ acts continuously on a space $X$ with connected maximal rank subgroups. This means that 
for every $x\in X$ the isotropy subgroup $G_{x}$ is connected and 
contains a maximal torus in $G$.  With these hypotheses the action of 
$G$ on $X$ induces an action of $W=N_{G}(T)/T$ on $X^{T}$ by passing to the $T$-fixed points. 
Our main goal is to compute the $E_{2}$-term in a spectral sequence for twisted equivariant 
K-theory in terms of the $W$-action on $X^{T}$. We will assume that $X^{T}$ is a $W$-CW complex. 
By \cite[Theorem 2.2]{AG} this is  equivalent to assuming that $X$ is a $G$-CW complex.  
Furthermore we will assume that there is a point $x_{0}\in X$ fixed by the $G$-action 
and that $X^{T}$ is path--connected although this last condition can be 
removed with obvious modifications.

Let $p\colon P\to X$ be a $G$-equivariant principal $PU(\H)$-bundle that we assume 
$G$-stable from now on. We also assume that the restriction of $p \colon P\to X$ over the 
base point $x_{0}$ is trivial. If the group $G$ is such that $\pi_{1}(G)$ 
is torsion--free then this condition holds for any bundle as $H^{3}(BG;\Z)=0$ 
for such groups. Since $p$ is $G$-locally trivial, for every $x\in X$ we can find a
$G_{x}$-invariant open slice $U_{x}$ through $x$ and a 
trivialization 
$$\varphi_{x}\colon p^{-1}(GU_{x})\to G\times_{G_{x}} (U_{x}\times PU(\H))$$ 
of $p$ as a principal $PU(\H)$-bundle which is also $G$-equivariant 
via  the local representation $\gamma_{x}\colon G_{x}\to PU(\H)$. 
Since the bundle $P$ is assumed to be $G$-stable then the local representations $\gamma_{x}$ 
is injective for all $x\in X$.  Let $\widetilde{G}_{x}=\gamma_{x}^{*}U(\H)$. In this way we get 
a central extension 
\[
1\to \SS^{1}\to \widetilde{G}_{x}\stackrel{\tau_{x}} \to G_{x}\to 1
\]
for every $x\in X$.  Notice that these central extensions depend on the 
choices made above and this family of central extensions does not necessarily 
vary continuously as $x$ moves in $X^{T}$.

\begin{proposition}\label{principalbundle}
Suppose that $G$ is a compact connected Lie group that acts on a $G$-CW complex $X$ 
with connected maximal rank isotropy and that $p\colon P\to X$ is a $G$-equivariant principal 
$PU(\H)$-bundle that is $G$-stable. Associated to $p$ there is a locally trivial bundle 
$q\colon L\to X^{T}$ such that the fiber over $x$ can be identified with a maximal torus
of $\widetilde{G}_x$. Fur-thermore, there is an action of $W$ on $L$ such that $q$ is a 
$W$-equivariant map.
\end{proposition}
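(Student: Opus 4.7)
The plan is to build $L$ by gluing local models and then lift the natural $W$-action on $X^T$ to $L$ using the $N_G(T)$-action on $P$. For the local construction, I would exploit $G$-local triviality: for each $x\in X^T$, choose a slice $U_x$, trivialization $\varphi_x$, and injective local representation $\gamma_x\colon G_x\to PU(\H)$ as in the setup. The open sets $V_x:=U_x\cap X^T$ cover $X^T$, and $\varphi_x$ identifies $P|_{V_x}$ with $V_x\times PU(\H)$ on which $T$ acts by $t\cdot(y,a)=(y,\gamma_x(t)a)$. Set
\[\widetilde{T}^{(x)}:=\pi^{-1}(\gamma_x(T))\subset U(\H),\]
where $\pi\colon U(\H)\to PU(\H)$. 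Since $\pi$ is a principal $\SS^1$-bundle with connected fiber and $\gamma_x(T)$ is connected, $\widetilde{T}^{(x)}$ is a closed connected compact abelian Lie group, i.e.\ a torus, and under the projection $\tau_x$ it is exactly the maximal torus $\tau_x^{-1}(T)$ of $\widetilde{G}_x=\gamma_x^{*}U(\H)$. Define $L|_{V_x}:=V_x\times\widetilde{T}^{(x)}$ with projection onto the first factor.

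Next I would glue these local pieces. The transition function $g_{12}\colon V_{x_1}\cap V_{x_2}\to PU(\H)$ of $P$ satisfies $\gamma_{x_2}(t)=g_{12}(y)^{-1}\gamma_{x_1}(t)g_{12}(y)$, so conjugation by $g_{12}(y)$ carries $\gamma_{x_1}(T)$ onto $\gamma_{x_2}(T)$. Although $g_{12}(y)$ need not lift canonically to $U(\H)$, any two local lifts differ by a scalar in $\SS^1$ and scalar conjugation is trivial; hence conjugation by a local continuous lift descends to a \emph{canonical} continuous isomorphism $\widetilde{T}^{(x_1)}\to\widetilde{T}^{(x_2)}$. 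These transition data inherit the cocycle condition from those of $P$ and assemble the local pieces into a locally trivial bundle $q\colon L\to X^T$ whose fiber over $y$ is canonically identified with the maximal torus of $\widetilde{G}_y$.

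For the $W$-equivariant structure, given $w\in W$ choose a lift $n\in N_G(T)$. With a base point $e\in P_y$ (defining $\gamma_y$) and $n\cdot e\in P_{wy}$ as the base point at $wy$ (defining $\gamma_{wy}$), the identity $tne=n(n^{-1}tn)e$ gives $\gamma_{wy}(t)=\gamma_y(n^{-1}tn)$. In particular $\gamma_{wy}(T)=\gamma_y(T)$ as subgroups of $PU(\H)$, so $\widetilde{T}_y=\widetilde{T}_{wy}$ as subsets of $U(\H)$, and sending $u$ to itself covers the Weyl conjugation $t\mapsto ntn^{-1}$ on $T$. Replacing $n$ by $nt'$ ($t'\in T$) changes the base point at $wy$ by an element of $\gamma_y(T)$; the resulting modification of the fiber map is conjugation by an element of the abelian group $\widetilde{T}_y$, and hence trivial. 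This yields a well-defined continuous $W$-action on $L$ for which $q$ is $W$-equivariant. The main obstacle is managing the tension between the $PU(\H)$-valued transition data of $P$ and the $U(\H)$-valued information required to realise tori as fibers; the saving observation is that conjugation by $U(\H)$ factors through $PU(\H)$, converting the potential $\SS^1$-ambiguity of lifts into no ambiguity at all, and once this is noted both the bundle gluing and the definition of the $W$-action reduce to routine local bookkeeping.
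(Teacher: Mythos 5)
Your proposal is correct and follows essentially the same route as the paper: the same local models $\widetilde{T}_x=\tau_x^{-1}(T)$ over the slices $V_x=U_x\cap X^T$, the same observation that conjugation by a $PU(\H)$-valued transition function acts canonically on $U(\H)$ (the $\SS^1$-ambiguity of lifts being killed because scalars are central), and the same resolution of the $T$-ambiguity in the lift $n$ of $w$ via the commutativity of the torus $\widetilde{T}_y$. The only difference is packaging: the paper builds $L$ directly as a quotient of $\coprod_x W\times_{W_x}(V_x\times\widetilde{T}_x)$ with the $W$-action built in, whereas you glue over $\{V_x\}$ first and define the $W$-action fiberwise afterwards; your fiberwise formula, after composing with the base-point identification $n\cdot e=e\cdot\gamma_x(n)$, recovers exactly the paper's formula $(t,u)\mapsto(ntn^{-1},\gamma_x(n)u\gamma_x(n)^{-1})$.
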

\begin{proof}[\bf Proof: ]
For each $x\in X^{T}$, fix a $G_{x}$-invariant open slice $U_{x}$ through $x$ in $X$,   
a local representation $\gamma_{x}\colon G_{x}\to PU(\H)$ and a $G$-equivariant trivialization 
\[
\varphi_{x} \colon p^{-1}(GU_{x})\to G\times_{G_{x}} (U_{x}\times PU(\H)).
\] 
We write elements in $G\times_{G_{x}} (U_{x}\times PU(\H))$ in the form 
$[g,(z,\sigma)]$, where $g\in G$, $z\in U_{x}$ and $\sigma \in PU(\H)$. Therefore 
in  $G\times_{G_{x}} (U_{x}\times PU(\H))$ we have 
$[g,(z,\sigma)]=[gh^{-1},(hz,\gamma_{x}(h)\sigma)]$ for all $h\in G_{x}$. 
If $x,y \in X^{T}$ are such that $U_{x}\cap U_{y}\ne \emptyset$,  
let $\rho_{x,y} \colon U_{x}\cap U_{y}\to PU(\H)$ be the transition function defined by the equation 
\[
\varphi_{x}(\varphi_{y}^{-1}[g,(z,\sigma)])=[g,(z,\rho_{x,y}(z)\sigma)]
\]
for all $z\in U_{x}\cap U_{y}$, all $g\in G$ and all $\sigma\in PU(\H)$. The transition functions 
and the local representations are compatible in the sense that 
\begin{equation}\label{eqcompatible}
\gamma_{y}(t)=\rho_{x,y}(tz)^{-1}\gamma_{x}(t)\rho_{x,y}(z)
\end{equation}
for all $z\in U_{x}\cap U_{y}$ and all $t\in T$. Given  $x\in X^{T}$ 
define $V_{x}:=U_{x}\cap X^{T}$. By \cite[Theorem 1.1]{Hauschild}  
the Weyl group of $G_{x}$ is $W_{x}$ so that 
each $V_{x}$ is $W_{x}$-invariant open set in $X^{T}$. Moreover, as the 
action of $G$ on $X$ has connected maximal rank isotropy subgroups, using 
\cite[Theorem 2.1]{Hauschild} we see that $V_{x}$ is a $W_{x}$-invariant slice 
through $x$ in $X^{T}$ and $(GU_{x})\cap X^{T}$ can be identified 
$W$-equivariantly with $W\times_{W_{x}}V_{x}$. 

On the other hand, notice that for each $x\in X^{T}$, the group 
$\widetilde{T}_{x}=(\gamma_{x})_{|}^{*}U(\H)$ is a maximal torus in $\widetilde{G}_{x}$.  
Recall that 
\[
\widetilde{T}_{x}=(\gamma_{x})_{|}^{*}U(\H)=\{(t,u)\in T\times U(\H) ~|~ \gamma_{x}(t)=\pi(u)\},
\]
where $\pi \colon U(\H)\to PU(\H)$ denotes the canonical map. Next we construct an action 
of $W_{x}$ on $\widetilde{T}_{x}$ by automorphisms. Let $n \in N_{G_{x}}(T)$ and $(t,u)\in \widetilde{T}_{x}$ so that 
$\gamma_{x}(t)=\pi(u)$. We define
\[
n\cdot (t,u):=(ntn^{-1}, \gamma_{x}(n)u\gamma_{x}(n)^{-1})\in \widetilde{T}_{x}.
\]
The above assignment is well defined because 
\[
\pi(\gamma_{x}(n)u\gamma_{x}(n)^{-1})=\gamma_{x}(n)\pi(u)\gamma_{x}(n)^{-1}=
\gamma_{x}(n)\gamma_{x}(t)\gamma_{x}(n)^{-1}=\gamma_{x}(ntn^{-1})
\]
so that $n\cdot (t,u)  \in \widetilde{T}_{x}$. It is easy to see that this defines a continuous 
action of $N_{G_{x}}(T)$ on $\widetilde{T}_{x}$ by group automorphisms. Moreover, for every 
$n \in N_{G_{x}}(T)$ we have a commutative diagram 
\[
\xymatrix{
1\ar[r]  &\SS^{1}\ar[d]_{\text{id}} \ar[r] & \widetilde{T}_x 
\ar[d]^{n\cdot } \ar[r]^{\tau_x} &T\ar[r] \ar[d]^{c_{n}} &1\\
1\ar[r] &\SS^{1} \ar[r] &\widetilde{T}_{x}\ar[r]^{\tau_{x}} & T\ar[r] &1.}
\]
In the above diagram $c_{n}$ denotes the map given by $c_{n}(t)=ntn^{-1}$ for all $t\in T$.
Since the group of automorphisms of $\widetilde{T}_x$ is discrete and $T$ is path-connected, 
this action factors through an action of $W_x=N_{G_x}(T)/T$ on $\widetilde{T}_x$. This is 
also an action by group automorphisms and which fits into an analogous commutative diagram.
%The action of $N_{G_{x}}(T)$ on $\widetilde{T}_{x}$ can be seen 
%as a group homomorphism  $F \colon N_{G_{x}}(T)\to \text{Aut}(\widetilde{T}_{x})$, 
%Suppose now that $t\in T$ and let $\alpha \colon [0,1]\to T$ be a path in $T$ from 
%$1$ to $t$. The assignment $r\mapsto F(\alpha(r))$ defines a continuous path in 
%$\text{Aut}(\widetilde{T}_{x})$  from $\text{id}$ to $F(s)$.  Since $\text{Aut}(\widetilde{T}_{x})$ 
%is a discrete group this path must be the constant map so that the action of $t$
%on $\widetilde{T}_{x}$ is the trivial automorphism for all $t\in T$. 
%This proves that the action of $N_{G_{x}}(T)$ on $\widetilde{T}_{x}$ factors through an action of 
%$W_{x}=N_{G_{x}}(T)/T$ on $\widetilde{T}_{x}$ by automorphisms.  

Using this action we can consider the space $W\times_{W_{x}}(V_{x}\times \widetilde{T}_{x})$, 
where $W_{x}$ acts on $V_{x}\times \widetilde{T}_{x}$ by the assignment 
$w\cdot (z,t,u)=(wz, wt, \gamma_{x}(n)u\gamma_{x}(n)^{-1})$. Here $n\in N_{G_{x}}(T)$ 
is any element such that $w=[n]$. We denote elements in 
$W\times_{W_{x}}(V_{x}\times \widetilde{T}_{x})$ 
in the form $[w, (z,t,u)]$ with $w\in W$, $z\in V_{x}$ and $(t,u)\in \widetilde{T}_{x}$. 
Suppose now that $z\in V_{x}\cap V_{y}$ and assume that $(z,t,u)\in V_{x}\times \widetilde{T}_{x}$. 
We claim that $(z,t,\rho_{x,y}(z)^{-1}u\rho_{x,y}(z))\in V_{y}\times \widetilde{T}_{y}$. 
Indeed, using (\ref{eqcompatible}) we have 
\[
\pi(\rho_{x,y}(z)^{-1}u\rho_{x,y}(z))=\rho_{x,y}(z)^{-1}\pi(u)\rho_{x,y}(z)
=\rho_{x,y}(z)^{-1}\gamma_{x}(t)\rho_{x,y}(z)=\gamma_{y}(t).
\]
With this in mind we define 
\[
L = \left( \coprod_{x\in X^{T}} W\times_{W_{x}}(V_{x}\times \widetilde{T}_{x}) \right) \Big{/} {\sim}.
\]
Here if $[w, (z,t,u)]\in W\times_{W_{x}}(V_{x}\times \widetilde{T}_{x})$ 
we define 
\[
[w, (z,t,u)]\sim [w,(z,t,\rho_{x,y}(z)^{-1}u\rho_{x,y}(z))]\in 
W\times_{W_{y}}(V_{y}\times \widetilde{T}_{y}).
\] 
This is well defined by the above comment. We denote by $\llbracket w, (z,t,u)\rrbracket$ 
the equivalence class in $L$ of the element $[w, (z,t,u)]$. 
With this definition the map $q\colon L\to X^{T}$  given 
by $q(\llbracket w, (z,t,u)\rrbracket)=wz$ is a locally trivial bundle.

To finish, we endow $L$ with an action of $W$. Suppose that 
$[w, (z,t,u)]\in W\times_{W_{x}}(V_{x}\times \widetilde{T}_{x})$ and that 
$w'\in W$. Then $[w'w, (z,t,u)]\in W\times_{W_{x}}(V_{x}\times \widetilde{T}_{x})$ 
and we define 
\[
w'\cdot \llbracket w, (z,t,u)\rrbracket=\llbracket w'w, (z,t,u)\rrbracket.
\] 
It is straightforward to check that this defines a continuous action of $W$ 
on $L$ and that $q \colon L\to X^{T}$ is a $W$-equivariant map. 
\end{proof}

\medskip

For the next proposition, note that any one-dimensional complex representation of $T$ is
a unit in the ring $R(T)$, hence multiplication by such a representation determines an 
automorphism of $R(T)$. Under tensoring of representations, the set $\Hom(T,\SS^1)$ of 
one-dimensional complex representations of $T$ is a subgroup of the group of units $R(T)^{\times}$.

\begin{proposition}\label{pi1action}
Suppose that $p\colon P\to X$ is $G$-equivariant principal $PU(\H)$-bundle such 
that the restriction of $p\colon P\to X$ to $x_{0}$ is trivial. 
Then $p\colon P\to X$ induces an action of $\pi_{1}(X^{T})$ on $R(T)$ given by 
a $W$-equivariant homomorphism   
\[
\phi_{P}\colon \pi_{1}(X^{T})\to \Hom(T,\SS^{1}).
\]
\end{proposition}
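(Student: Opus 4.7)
The strategy is to use the $W$-equivariant bundle $q\colon L \to X^T$ built in Proposition~\ref{principalbundle}, whose fiber over $x\in X^T$ is the central extension $1 \to \SS^1 \to \widetilde{T}_x \to T \to 1$. The crucial observation is that the transition maps constructed there have the form
\[
(z,t,u) \longmapsto \bigl(z,\, t,\, \rho_{x,y}(z)^{-1}u\,\rho_{x,y}(z)\bigr),
\]
which fix the $T$-coordinate and, because $\SS^1\subset U(\H)$ is central, fix the subgroup $\SS^1\subset \widetilde{T}_x$ pointwise. Thus they are morphisms of central extensions inducing the identity on both the subgroup $\SS^1$ and the quotient $T$. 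The set of such automorphisms of a central extension $1\to \SS^1\to \widetilde{T}\to T\to 1$ is canonically the discrete group $\Hom(T,\SS^1)$, via $\phi\leftrightarrow\bigl((t,u)\mapsto(t,\phi(t)u)\bigr)$.

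Given a loop $\alpha\colon [0,1]\to X^T$ based at $x_0$, I would cover its image by finitely many charts $V_{x_0}=V_{y_0},V_{y_1},\dots,V_{y_n}=V_{x_0}$ with consecutive charts overlapping near the transit points, and define $\phi_P(\alpha)\in \Hom(T,\SS^1)$ as the character encoding the monodromy: the composition of the transition isomorphisms around the loop, acting on $\widetilde{T}_{x_0}$ by the observation above, has the form $(t,u)\mapsto(t,\phi_P(\alpha)(t)u)$. Multiplicativity of $\phi_P(\alpha)$ in $t$ follows from computing this monodromy on a product lift $u_{t_1}u_{t_2}$ of $\gamma_{x_0}(t_1 t_2)$, using the centrality of the resulting scalars. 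The triviality of $p$ at $x_0$ serves to fix a reference trivialization of $\widetilde{T}_{x_0}$ so that the construction is unambiguous at the basepoint.

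Well-definedness on $\pi_1(X^T,x_0)$ is the main point, and it follows from the fact that $\Hom(T,\SS^1)$ is discrete: a continuous deformation of $\alpha$ yields a continuous family of monodromies in a discrete group, hence a constant family. Refining the covering telescopes the composite of transitions without changing the result; different lifts of the transition elements $\rho_{x,y}(z)\in PU(\H)$ to $U(\H)$ differ by central scalars that cancel in the conjugation $\tilde{\rho}^{-1}u\tilde{\rho}$. The homomorphism property $\phi_P(\alpha\beta)=\phi_P(\alpha)\phi_P(\beta)$ is immediate from the composition of monodromies under loop concatenation.

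For the $W$-equivariance, the construction above is natural with respect to the $W$-action on $L$ from Proposition~\ref{principalbundle}. Since $x_0$ is $G$-fixed, each $w\in W$ permutes loops at $x_0$, and one verifies $\phi_P(w\cdot\alpha)=w\cdot\phi_P(\alpha)$ with $W$ acting on $\Hom(T,\SS^1)$ through its conjugation action on $T$. Finally, the embedding $\Hom(T,\SS^1)\hookrightarrow R(T)^{\times}$ as one-dimensional characters yields the claimed action of $\pi_1(X^T)$ on $R(T)$ by multiplication. The main step is the first observation, that the composite of the transitions around a loop lands in the discrete subgroup $\Hom(T,\SS^1)\subset\Aut(\widetilde{T}_{x_0})$; the remaining verifications of naturality, well-definedness, and $W$-equivariance are then routine.
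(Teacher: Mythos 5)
Your proposal is correct and follows essentially the same route as the paper: both arguments take holonomy in the bundle $L$ of Proposition~\ref{principalbundle}, using the key fact that the transition maps act on fibers as automorphisms of the central extension $\widetilde{T}_x$ inducing the identity on $\SS^1$ and on $T$, so that the monodromy lands in the discrete group $\Hom(T,\SS^1)$ and descends to $\pi_1(X^T)$. The paper merely packages your chart-by-chart monodromy as the holonomy of an associated covering space $L^{aut}\to X^T$ of extension automorphisms, with $W$-equivariance obtained from the conjugation action exactly as you describe.
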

\begin{proof}[\bf Proof: ]
Let $q\colon L\to X^{T}$ be the bundle constructed in 
Proposition \ref{principalbundle}. Thus for every $x\in X^{T}$ we have  a local representation 
$\gamma_{x}\colon G_{x}\to PU(\H)$ together with central extensions 
\[
1\to \SS^{1}\to \widetilde{G}_{x}\stackrel{\tau_{x}}{\rightarrow} G_{x}\to 1
\]
in such a way that $\widetilde{T}_{x}=q^{-1}(x)$ is a maximal torus in $\widetilde{G}_{x}$.  
The space  $\widetilde{T}_{x}$ fits into a central extension of the form 
\[
1\to \SS^{1}\to \widetilde{T}_{x}\stackrel{\tau_{x}}{\rightarrow} T\to 1.
\]
For every $x\in X^{T}$ the above central extension is trivializable as $H^{3}(BT;\Z)=0$. 
All possible identifications of $\widetilde{T}_{x}$ with  $T\times \SS^{1}$
can be used to construct an action of $\pi_{1}(X^{T})$ on $R(T)$ via holonomy. 
To see this notice that associated to the bundle $L$ we have a 
covering space $r \colon L^{aut}\to X^{T}$  whose fiber over $x$ is the set of automorphisms 
of central extensions from $\widetilde{T}_x$ to $\widetilde{T}_x$. 
Let $\alpha\colon [0,1]\to X^{T}$ be any loop in $X^{T}$ based at $x_{0}$. 
Via holonomy associated to $\alpha$ there is an isomorphism of central 
extensions
\[
\xymatrix{
1\ar[r]  &\SS^{1}\ar[d]_{\text{id}} \ar[r] & \widetilde{T}_{x_{0}} 
\ar[d]^{\Phi_{\alpha}}\ar[r]^{\tau_{x_{0}}}&T\ar[r] \ar[d]^{\text{id}}&1\\
1\ar[r] &\SS^{1} \ar[r] &\widetilde{T}_{x_{0}} \ar[r]^{\tau_{x_{0}}}&T\ar[r] &1.}
\]
This isomorphism only depends on the class $[\alpha]\in \pi_{1}(X^T)$. 

The action of $W$ on $L$ constructed in Proposition \ref{principalbundle} induces an action 
of $W$ on $L^{aut}$. Namely, if $w \in W$ and $\varphi$ is an automorphism
of the central extension $\widetilde{T}_x$, we consider $ w \cdot \varphi = w \varphi w^{-1}$.
The map $r$ is $W$-equivariant with respect to this action, hence 
$\Phi_{w\alpha}=w\Phi_{\alpha}w^{-1}$ for any $w\in W$. In other words, 
the assignment $[\alpha]\mapsto \Phi_{\alpha}$ is $W$-equivariant with 
$W$ acting by conjugation on the group of automorphisms of the central extension 
$\widetilde{T}_{x_{0}}$. Observe that the group of automorphisms of the central 
extension $\widetilde{T}_{x_{0}}$ can be identified $W$-equivariantly 
with the group $\Hom(T,\SS^{1})$ as the restriction of $p$ over $x_{0}$ is 
assumed to be trivial. Thus the above assignment defines a $W$-equivariant 
group homomorphism 
\[
\phi_{P}\colon \pi_{1}(X^{T})\to \Hom(T,\SS^{1}).  \qedhere
\]
\end{proof}

\medskip

The homomorphism $\phi_{P}\colon \pi_{1}(X^{T})\to \Hom(T,\SS^{1})$ associated to a
$G$-equivariant principal $PU(\H)$-bundle $p\colon P\to X$ constructed in the 
previous proposition is a key part in the calculation of the
spectral sequence for twisted equivariant K-theory as we shall see below. 
On the other hand, notice that $\Hom(T,\SS^{1})\cong H^{2}(BT;\Z)$ as an abelian group. 
Therefore the homomorphism $\phi_{P}$ associated to the bundle $p\colon P\to X$  
induces a group homomorphism 
$\bar{\phi}_{P}\colon H_{1}(X^{T};\Z)\to \Hom(T,\SS^{1})\cong H^{2}(BT;\Z)$.  Using the 
universal coefficient theorem we can identify $\bar{\phi}_P$ with an element in 
$H^{1}(X^{T};H^{2}(BT;\Z))\cong H^{1}(X^{T};\Z)\otimes H^{2}(BT;\Z)$ which 
by abuse of notation we denote also by 
$\phi_{P}$. On the other hand, let 
$$h_{T}\colon H^{3}_{G}(X;\Z)\to H^{1}(X^{T};\Z)\otimes H^{2}(BT;\Z)$$ 
be the composite of the restriction map to $T$-fixed points 
\[
r_{T}\colon H^{3}_{G}(X;\Z)\to H_{T}^{3}(X^{T};\Z)\cong H^{3}(X^{T}\times BT;\Z)
\]
with the projection map 
\[
\pi_{T}\colon H^{3}(X^{T}\times BT;\Z)\cong H^{3}(X^{T};\Z)\oplus 
H^{1}(X^{T};\Z)\otimes H^{2}(BT;\Z) 
\to H^{1}(X^{T};\Z)\otimes H^{2}(BT;\Z).
\]
Using arguments similar to those used in \cite[Section 2.2]{Meinrenken} we can 
deduce the next proposition.

\begin{proposition}
Let $\eta_{P}\in H_{G}^{3}(X;\Z)$ be the cohomology class corresponding to the 
bundle $p\colon P\to X$. Then the element 
$h_{T}(\eta_{P})\in H^{1}(X^{T};\Z)\otimes H^{2}(BT;\Z)$ corresponds to $\phi_{P}$.
\end{proposition}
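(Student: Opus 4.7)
The plan is to compare $h_T(\eta_P)$ and $\phi_P$ by reducing both to monodromy along loops in $X^T$.

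First, I would unpack the target. Since $T$ acts trivially on $X^T$, there is a natural identification $H^3_T(X^T;\Z) \cong H^3(X^T \times BT;\Z)$; the K\"unneth formula, together with $H^1(BT;\Z)=0$, $H^3(BT;\Z)=0$, and the torsion-freeness of $H^2(BT;\Z)$, produces the splitting used to define $\pi_T$. Under the identifications $H^2(BT;\Z) \cong \Hom(T,\SS^1)$ and $H^1(X^T;\Z) \cong \Hom(H_1(X^T;\Z),\Z)$, the cross term $H^1(X^T;\Z) \otimes H^2(BT;\Z)$ is naturally isomorphic to $\Hom(H_1(X^T;\Z), \Hom(T,\SS^1))$, i.e.\ the same ambient group in which the homomorphism $\bar{\phi}_P$ defined just before the proposition lives.

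Second, I would verify agreement loop by loop. For each loop $\alpha\colon \SS^1 \to X^T$, pullback yields a $T$-equivariant principal $PU(\H)$-bundle $\alpha^*P$ over $\SS^1$ whose equivariant cohomology class lies in $H^3_T(\SS^1;\Z)$. By naturality, $\alpha^*(r_T(\eta_P))$ is this class. Applying the K\"unneth decomposition over $\SS^1$, in which the $H^3(\SS^1;\Z)$ summand vanishes, I would identify the image under $\pi_T$ with the element of $H^1(\SS^1;\Z)\otimes H^2(BT;\Z) \cong \Hom(T,\SS^1)$ produced by the monodromy of $\alpha^*P$. A direct inspection, using that $\alpha^*P$ is trivializable as a non-equivariant bundle, shows that this monodromy is the automorphism $\Phi_{\alpha}$ of the central extension $\widetilde{T}_{\alpha(0)}$ constructed in the proof of Proposition \ref{pi1action}, and hence equals $\phi_P([\alpha])$.

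The main technical obstacle lies in this last identification: translating the topological content of $\alpha^*(r_T(\eta_P))$ into the explicit holonomy automorphism $\Phi_{\alpha}$. This is a variation on the fact that a degree-two integral class on $\SS^1 \times BT$ is detected by the monodromy of an associated circle bundle, applied fiberwise to the bundle of central extensions $L \to X^T$ of Proposition \ref{principalbundle}. The reference to \cite[Section 2.2]{Meinrenken} supplies essentially the same calculation in the context of equivariant cohomology of Lie groups; transporting the argument to our setting should be routine once one observes that the cross term in the Borel--K\"unneth decomposition only records the variation of $\widetilde{T}_x$ along loops in $X^T$, which is precisely what the holonomy in Proposition \ref{pi1action} measures.
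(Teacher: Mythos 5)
Your proposal is correct and follows essentially the same route as the paper, which in fact offers no written proof of this proposition beyond the remark that it follows from "arguments similar to those used in \cite[Section 2.2]{Meinrenken}"; your loop-by-loop reduction of the cross term $H^{1}(X^{T};\Z)\otimes H^{2}(BT;\Z)$ to the holonomy of the family of central extensions $\widetilde{T}_{x}$ is precisely the argument that citation encapsulates. The one step you flag as the technical heart --- identifying $\pi_{T}(\alpha^{*}r_{T}(\eta_{P}))\in H^{2}(BT;\Z)\cong\Hom(T,\SS^{1})$ with $\Phi_{\alpha}$ --- is exactly what the paper delegates to Meinrenken, so your sketch is at least as complete as the paper's own treatment.
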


Suppose that $G$ acts on a space $X$ with connected maximal rank isotropy and 
that there is a  point $x_{0}\in X$ fixed by $G$ that we take as the base point. 
Let $\pi\colon \widetilde{X^{T}}\to X^{T}$ be 
the universal cover of $X^{T}$. Then the group $\pi_{1}(X^{T}):=\pi_{1}(X^{T},x_{0})$ 
acts on $\widetilde{X^{T}}$  by deck transformations.  
Fix a point $\tilde{x}_{0}$ in $\pi^{-1}(x_{0})$.   
For every $w\in W$, there is a unique map 
$\widetilde{w}\colon \widetilde{X^{T}} \to \widetilde{X^{T}}$ 
that is a lifting of the action map $w\colon X^{T}\to X^{T}$ and that satisfies 
$\widetilde{w}(\tilde{x}_{0})=\tilde{x}_{0}.$ 
Such a map exists and is unique by the lifting theorem for covering spaces. 
This assignment defines an action of $W$ on 
$\widetilde{X^{T}}$ in such a way that the map $\pi\colon \widetilde{X^{T}}\to X^{T}$  
is $W$-equivariant. On the other hand, observe that since the $G$-action leaves $x_{0}$ 
fixed, then  $W$ also fixes $x_{0}$ and therefore  
$W$ acts on $\pi_{1}(X^{T})$ by automorphisms under the assignment 
$w\cdot[\alpha]=[w\cdot \alpha]$. If we identify $\pi_{1}(X^{T})$  with the group 
of deck transformations on $\widetilde{X^{T}}$, this action corresponds to the 
conjugation action of $W$ on the group of deck transformations. 

With this action in mind, we can construct the semi-direct product  
$\widetilde{W}:=\pi_{1}(X^{T})\rtimes W$. We write elements in $\widetilde{W}$ in the 
form $(a,w)$ with $a\in \pi_{1}(X^{T})$ and $w\in W$. We have an induced 
action of $\widetilde{W}$ on $\widetilde{X^{T}}$. Explicitly, if 
$\tilde{x}\in \widetilde{X^{T}}$ and $(a,w)\in \widetilde{W}$ then
\[ (a,w)\cdot \tilde{x}:= a\cdot (w\cdot \tilde{x}) . \]
Moreover, if we let $\pi_{1}(X^{T})$ act trivially on $X^{T}$ then the action of 
$\widetilde{W}$ on $X^{T}$ is such that the projection map $\pi\colon \widetilde{X^{T}}\to X^{T}$ 
is $\widetilde{W}$-equivariant. 

Assume that $x\in X^{T}$ is a point and let $\tilde{x}\in \widetilde{X^{T}}$ be any point 
such that $\pi(\tilde{x})=x$. Suppose that $(a,w)\in \widetilde{W}_{\tilde{x}}$; that is, 
$(a,w)\cdot \tilde{x}=a\cdot (w\cdot \tilde{x})=\tilde{x}$. Applying $\pi$ to the previous equation 
we obtain $wx=x$ and thus $w\in W_{x}$. In addition, $a$ is a deck transformation that satisfies 
$a\cdot (w\cdot \tilde{x})=\tilde{x}$. Since deck transformations are uniquely 
determined by their values at a point we conclude that for every $w\in W_{x}$ there is a unique 
element $a\in \pi_{1}(X^{T})$ such that $(a,w)\in \widetilde{W}_{\tilde{x}}$. If $w\in W_{x}$ we 
denote by $a_{w}\in \pi_{1}(X^{T})$ the unique element such that 
$(a_{w},w)\in \widetilde{W}_{\tilde{x}}$. It can be seen that the assignment $w\mapsto a_{w}$ 
defines a cocycle on $W_{x}$ with values on $\pi_{1}(X^{T})$.

\medskip

Next we want to study a spectral sequence \`a la Segal computing ${}^{P}K_{G}^{*}(X)$ 
for a $G$-stable $G$-equivariant principal $PU(\H)$-bundle  $p\colon P \to X$.  
In the context of twisted equivariant K-theory, the spectral sequence is easier 
to describe using $G$-invariant open covers of $X$. 

\begin{definition}
Let $X$ be a $G$-space. We say that a $G$-invariant open subspace $U$ of $X$
is a contractible slice if there exists $x \in U$ such that the inclusion map
$Gx \hookrightarrow U$ is a $G$-homotopy equivalence.
\end{definition}

\begin{definition}
A $G$-equivariant good cover of a $G$-space $X$ is a cover $\U=\{U_{i}\}_{i\in \I}$ of $X$
by $G$-invariant open subsets such that the following two conditions hold:
\begin{itemize}
\item $\I$ is a well ordered set.
\item For every sequence $i_{1}\le \cdots\le i_{p}$ of elements in $\I$, if 
$U_{i_{1},\dots, i_{p}}:=U_{i_{1}}\cap \cdots \cap U_{i_{p}}$ is nonempty, then
it is a contractible slice.
\end{itemize}
\end{definition}

These covers are also called contractible slice covers. Note that in particular,
whenever $U_{i_{1},\dots, i_{p}}$ is nonempty, there exists some element 
$x_{i_{1},\dots,i_{p}}\in U_{i_{1},\dots, i_{p}}$ such that the inclusion map  
$Gx_{i_{1},\dots,i_{p}}\hookrightarrow U_{i_{1},\dots,i_{p}}$ is a 
$G$-homotopy equivalence and therefore $U_{i_{1},\dots,i_{p}}
\simeq G/G_{x_{i_{1},\dots,i_{p}}}$. 

When $G$ is a compact Lie group and $X$ is a $G$-ANR, the existence of $G$-equivariant
good covers is guaranteed by the work in \cite{Antonyan}. In particular, $G$-equivariant good 
covers exist for finite dimensional $G$-CW complexes when $G$ is a compact Lie group.

\begin{lemma}\label{one-to-one cover}
Suppose that $G$ acts on $X$ with connected maximal rank isotropy subgroups. 
Then there is a one-to-one correspondence between $G$-equivariant good covers on 
$X$ and $\widetilde{W}$-equivariant good covers on $\widetilde{X^{T}}$.
\end{lemma}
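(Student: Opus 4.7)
The plan is to exhibit mutually inverse assignments between $G$-equivariant good covers of $X$ and $\widetilde{W}$-equivariant good covers of $\widetilde{X^{T}}$. Given a $G$-invariant open set $U \subseteq X$ I form the open set $\widetilde{V} := \pi^{-1}(U \cap X^{T}) \subseteq \widetilde{X^{T}}$, which is $\widetilde{W}$-invariant since $U \cap X^{T}$ is $W$-invariant and $\pi$ is $W$-equivariant while $\pi_{1}(X^{T})$ acts by deck transformations. Conversely, given a $\widetilde{W}$-invariant open set $\widetilde{V} \subseteq \widetilde{X^{T}}$ I form $U := G \cdot \pi(\widetilde{V}) \subseteq X$. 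To see these are mutually inverse I would use two observations: first, since every isotropy subgroup of the $G$-action on $X$ contains a maximal torus, every $G$-orbit in $X$ meets $X^{T}$, so $U = G \cdot (U \cap X^{T})$ for any $G$-invariant open $U$; second, since $\widetilde{V}$ is invariant under $\pi_{1}(X^{T}) \subseteq \widetilde{W}$ it equals $\pi^{-1}(\pi(\widetilde{V}))$.

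The heart of the proof is to verify that the contractible slice condition is preserved in both directions. Assume $U$ is a $G$-contractible slice, so that $Gx \hookrightarrow U$ is a $G$-homotopy equivalence for some $x \in U$; since $Gx$ meets $X^{T}$ I may take $x \in U \cap X^{T}$, and then $G_{x}$ is a connected subgroup of maximal rank. Taking $T$-fixed points of the $G$-homotopy equivalence $G/G_{x} \simeq U$ gives a $W$-homotopy equivalence $(G/G_{x})^{T} \simeq U \cap X^{T}$; by \cite[Theorem 1.1]{Hauschild} this identifies $U \cap X^{T}$ up to $W$-homotopy with the finite discrete $W$-set $W/W_{x}$. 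Hence $U \cap X^{T}$ has $|W/W_{x}|$ contractible components permuted transitively by $W$, each of which is in particular simply connected. Its preimage under $\pi$ therefore breaks into $|\pi_{1}(X^{T})|$ contractible copies of each component, and the $\widetilde{W}$-action permutes all these components transitively. From the analysis preceding the lemma, for any lift $\tilde{x}$ of $x$ the stabilizer $\widetilde{W}_{\tilde{x}}$ has order $|W_{x}|$, so $|\widetilde{W}/\widetilde{W}_{\tilde{x}}| = |W/W_{x}| \cdot |\pi_{1}(X^{T})|$ matches the total number of components, and $\widetilde{W} \tilde{x} \hookrightarrow \pi^{-1}(U \cap X^{T})$ is a $\widetilde{W}$-homotopy equivalence.

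The reverse direction runs the same argument backwards: a $\widetilde{W}$-contractible slice $\widetilde{V}$ is $\widetilde{W}$-homotopy equivalent to an orbit $\widetilde{W} \tilde{x}$, so projecting down yields a $W$-contractible slice through $\pi(\tilde{x})$ in $X^{T}$; applying \cite[Theorem 2.1]{Hauschild} which identifies $(GU_{\pi(\tilde{x})}) \cap X^{T}$ with $W \times_{W_{\pi(\tilde{x})}} V_{\pi(\tilde{x})}$ then shows that the $G$-saturation $G \cdot \pi(\widetilde{V})$ is a $G$-contractible slice in $X$ through $\pi(\tilde{x})$. Since both operations $U \mapsto \pi^{-1}(U \cap X^{T})$ and $\widetilde{V} \mapsto G \cdot \pi(\widetilde{V})$ commute with finite intersections of invariant open sets, the good cover conditions (and the well-ordering of the index set, which is transported unchanged) carry over for every non-empty intersection $U_{i_{1}, \dots, i_{p}}$. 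The main obstacle is this two-way check of the contractible slice property, which requires carefully combining Hauschild's $W$-equivariant slice theorems with covering-space theory so as to match the $\widetilde{W}$-isotropy of lifts with the $W$-isotropy on $X^{T}$ via the cocycle $w \mapsto a_{w}$ introduced above.
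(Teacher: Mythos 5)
Your overall route coincides with the paper's: restrict to $T$-fixed points using Hauschild's theorems and then pass through the universal cover, the only cosmetic difference being that you compose the two correspondences ($G$-covers of $X$ $\leftrightarrow$ $W$-covers of $X^{T}$ $\leftrightarrow$ $\widetilde{W}$-covers of $\widetilde{X^{T}}$) into a single assignment. The descent and ascent between $X$ and $X^{T}$, and the check that the two assignments are mutually inverse, are essentially correct (though your claim that $\widetilde{V}\mapsto G\cdot\pi(\widetilde{V})$ commutes with intersections silently uses that $(GA)\cap X^{T}=A$ for $W$-invariant $A\subseteq X^{T}$, which needs the conjugacy of the maximal tori $T$ and $g^{-1}Tg$ inside the connected group $G_{a}$ --- true, but it should be said).

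The genuine gap is in the ascent from $U\cap X^{T}$ to $\pi^{-1}(U\cap X^{T})$. You deduce that the preimage is a disjoint union of contractible components, count them against $|\widetilde{W}/\widetilde{W}_{\tilde{x}}|$, and conclude that $\widetilde{W}\tilde{x}\hookrightarrow\pi^{-1}(U\cap X^{T})$ is a $\widetilde{W}$-homotopy equivalence. This does not follow. First, the cardinality match is vacuous whenever $\pi_{1}(X^{T})$ is infinite, which is the typical situation in this paper (e.g.\ $X^{T}=T$ with $\pi_{1}=\Lambda\cong\Z^{r}$ in the inertia-space applications); two countably infinite sets always have the same cardinality, and even an honest bijection between orbit points and components is not what is needed. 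Second, and more fundamentally, a bijection onto contractible components only yields a \emph{non-equivariant} homotopy equivalence: a $\widetilde{W}$-homotopy equivalence requires the inclusion to induce homotopy equivalences on $H$-fixed point sets for every subgroup $H\le\widetilde{W}$, and contractibility of a component $C$ says nothing about $C^{H}$ when $H$ stabilizes $C$. The repair is exactly the paper's argument, which you should substitute for the counting: the restriction $\pi_{|}\colon\pi^{-1}(U\cap X^{T})\to U\cap X^{T}$ is a covering map, so the $W$-equivariant homotopy inverse of $Wx\hookrightarrow U\cap X^{T}$, together with the two $W$-homotopies witnessing the equivalence, lifts uniquely (after fixing $\tilde{x}$) to $\widetilde{W}$-equivariant data upstairs by the lifting property; this produces the required $\widetilde{W}$-homotopy equivalence directly.
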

\begin{proof}[\bf Proof: ]
Suppose that $\U=\{U_{i}\}_{i\in \I}$ is a $G$-equivariant. For each $i \in \I$, 
the set $U_{i}^{T}:=U_{i}\cap X^{T}$ is open in $X^{T}$ and $W$-invariant. By assumption, for each 
sequence $i_{1}\le \cdots\le i_{p}$ of elements in $\I$ with $U_{i_{1},\dots,i_{p}}$ 
nonempty we can find an element $x_{i_{1},\dots, i_{p}}\in U_{i_{1},\dots,i_{p}}$ such that 
the inclusion map $Gx_{i_{1},\dots, i_{p}}\hookrightarrow U_{i_{1},\dots,i_{p}}$ is a 
$G$-homotopy equivalence. Since $G_{x_{i_{1},\dots, i_{p}}}$ is a subgroup of maximal rank
and all maximal tori in $G$ are conjugate, then after replacing $x_{i_{1},\dots, i_{p}}$  with
 $gx_{i_{1},\dots, i_{p}}$ for a suitable $g$,
we may assume without loss of generality that $x_{i_{1},\dots, i_{p}}\in X^{T}$. Then 
$x_{i_{1},\dots, i_{p}}\in U_{i_{1},\dots,i_{p}}\cap X^{T}=U_{i_{1},\dots,i_{p}}^{T}$. Moreover, 
the $G$-homotopy equivalence $Gx_{i_{1},\dots, i_{p}}\hookrightarrow U_{i_{1},\dots,i_{p}}$  
induces a $W$-homotopy equivalence  
$Wx_{i_{1},\dots, i_{p}}\hookrightarrow U_{i_{1},\dots,i_{p}}^{T}$ after passing to $T$-fixed 
points. Thus $\U^{T}=\{U_{i}^{T}\}_{i\in \I}$ is a $W$-equivariant good cover of 
$X^{T}$. 

Conversely, if $\U^{T}=\{U_{i}^{T}\}_{i\in \I}$  is a $W$-equivariant 
good cover of $X^{T}$ then for every $i\in I$ we can define $U_{i}=\cup_{g\in G}gU^{T}_{i}$.  
Then $\U=\{U_{i}\}_{i\in \I}$ is an open cover of $X$ by $G$-invariant sets.  
For each sequence $i_{1}\le \cdots\le i_{p}$ of elements in $\I$ with $U_{i_{1},\dots,i_{p}}^{T}$ 
nonempty, the $W$-homotopy equivalence  
$Wx_{i_{1},\dots, i_{p}}\hookrightarrow U_{i_{1},\dots,i_{p}}^{T}$ induces a 
$G$-homotopy equivalence $Gx_{i_{1},\dots, i_{p}}\hookrightarrow U_{i_{1},\dots,i_{p}}$ 
by  \cite[Theorem 2.1]{Hauschild}. This shows that $G$-equivariant good covers on $X$ are 
in one-to-one correspondence with $W$-equivariant good covers on $X^{T}$.

Suppose now that $\U^{T}=\{U_{i}^{T}\}_{i\in \I}$  is a $W$-equivariant 
good cover on $X^{T}$. For each $i\in \I$ define  
$\widetilde{U_{i}}=\pi^{-1}(U^{T}_{i})$. Then $\widetilde{\U}=\{\widetilde{U}_{i}\}_{i\in\I}$  
is an open cover of $\widetilde{X^{T}}$  by $\widetilde{W}$-invariant sets.  
Fix a sequence  $i_{1}\le \cdots\le i_{p}$ of elements in $\I$
with $U_{i_{1},\dots,i_{p}}^{T}$  nonempty  so that there is a 
$W$-homotopy equivalence  
$Wx_{i_{1},\dots, i_{p}}\hookrightarrow U_{i_{1},\dots,i_{p}}^{T}$ for some $x_{i_{1},\dots, i_{p}}$.
Fix $\tilde{x}_{i_{1},\dots, i_{p}}\in \widetilde{U}_{i_{1},\dots,i_{p}}$ with 
$\pi(\tilde{x}_{i_{1},\dots, i_{p}})=x_{i_{1},\dots, i_{p}}$. Notice that 
the restriction map  $\pi_{|}\colon \widetilde{U}_{i_{1},\dots,i_{p}}\to U_{i_{1},\dots,i_{p}}^{T}$ 
is a covering space.
Using the lifting property we can lift the $W$-homotopy equivalence 
$Wx_{i_{1},\dots, i_{p}}\hookrightarrow U_{i_{1},\dots,i_{p}}^{T}$ to a 
$\widetilde{W}$-equivariant homotopy equivalence 
$\widetilde{W}\tilde{x}_{i_{1},\dots, i_{p}}
\hookrightarrow \widetilde{U}_{i_{1},\dots,i_{p}}$ proving that $\widetilde{\U}$ is a 
$\widetilde{W}$-equivariant good cover on $\widetilde{X^{T}}$.  

Conversely, if $\widetilde{\U}=\{\widetilde{U}_{i}\}_{i\in\I}$  is a $\widetilde{W}$-equivariant 
good cover on $\widetilde{X^{T}}$, then it is easy to see that we can get a $W$-equivariant good 
cover on $X^{T}$ by defining $U^{T}_{i}=\widetilde{U}_{i}/\pi_{1}(X^{T})$. Thus $W$-equivariant 
good covers on $X^{T}$ are in one-to-one correspondence with 
$\widetilde{W}$-equivariant good covers on $\widetilde{X^{T}}$ and the lemma follows. 
\end{proof}

\medskip

Suppose now that $X$ is a $G$-space for which we can find 
a $G$-equivariant good open cover $\U=\{U_{i}\}_{i\in \I}$
and $p \colon P \to X$ is a $G$-equivariant principal $PU(\H)$-principal bundle. 
If  $i_{1}\le \dots\le i_{p}$ is a sequence of elements 
in  $\I$ we denote by $P_{i_{1},\dots, i_{p}}$ 
the restriction of $P$ to $U_{i_{1},\dots,i_{p}}$.  
Associated to the cover $\U$ we have a spectral sequence \`a la Segal that is constructed in a 
similar way as in \cite[Section 4]{BEJU}. The $E_{1}$-term in this spectral sequence is given by 
\begin{equation}\label{Segalss}
E_{1}^{p,q}=\prod_{i_{1}\le \dots\le i_{p}}
{}^{P_{i_{1},\dots, i_{p}}}K_{G}^{q}(U_{i_{1},\dots, i_{p}}).
\end{equation}
The differential $d_{1}\colon E_{1}^{p,q}\to E_{1}^{p+1,q}$ is given by the alternating sum 
of the different restriction maps 
\[
{}^{P_{i_{1},\dots,\hat{i}_{j},\dots, i_{p+1}}}
K_{G}^{q}(U_{i_{1},\dots,\hat{i}_{j},\dots, i_{p+1}}) 
\to {}^{P_{i_{1},\dots,i_{p+1}}}K_{G}^{q}(U_{i_{1},\dots, i_{p+1}})
\]
for all $1\le j\le p+1$. Here we use the usual convention that $\hat{i}_{j}$ means that 
the index $i_{j}$ is removed.  Our next goal is to identify the $E_{2}$-term of this 
spectral sequence with a suitable Bredon cohomology group of $\widetilde{X^{T}}$.  

Recall that if the restriction of $p$ to $x_{0}$ is trivial, then  
by Proposition \ref{pi1action} associated to the bundle $p\colon P\to X$
we have an action of $\pi_{1}(X^{T})$ on $\Hom(T,\SS^{1})$ that is 
$W$-equivariant. Therefore this action can be extended to an action of 
$\widetilde{W}=\pi_{1}(X^{T})\rtimes W$ on $R(T)$. We now consider the
coefficient system 
\[
\lR:=H^{0}(-;R(T)).
\]
defined on the orbit category of $\widetilde{W}$. Explicitly the value of this coefficient 
system at an orbit of the form $\widetilde{W}/\widetilde{W_{i}}$ are
\[
\lR(\widetilde{W}/\widetilde{W_{i}})=R(T)^{\widetilde{W_{i}}}.
\]
Note that $\lR$ can also be seen as a functor from the orbit category of $\widetilde{W}$
to the category of $R(G)$-modules.

\begin{theorem}\label{identifycontract}
Suppose that $p \colon P\to X$ is a $G$-equivariant principal $PU(\H)$-bundle that 
is $G$-stable and such that its restriction to base point $x_{0}$ is trivial. 
Assume that $U$ is a contractible slice and let $\widetilde{U}=\pi^{-1}(U\cap X^{T})$. 
Then for every even integer $q$ there is an isomorphism of 
$R(G)$-modules 
\[
\psi_{U}\colon {}^{P_{U}}K_{G}^{q}(U)\to H_{\widetilde{W}}^{0}(\widetilde{U};\lR).
\]
Moreover, if $U$ and $V$ are two $G$-equivariant slices with $U\subset V$  
the following diagram commutes 
\begin{equation*}
\xymatrix{
{}^{P_{V}}K_{G}^{q}(V)\ar[d]\ar[r]^{\psi_{V}} & 
H^{0}_{\widetilde{W}}(\widetilde{V};\lR) \ar[d]\\
{}^{P_{U}}K_{G}^{q}(U)\ar[r]^{\psi_{U}} & 
H^{0}_{\widetilde{W}}(\widetilde{U};\lR)
}
\end{equation*}
where the vertical maps are the restriction maps. 
\end{theorem}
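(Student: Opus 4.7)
The plan is to reduce both sides of the desired isomorphism to purely algebraic invariants attached to a single point $x \in U^T$, and then match those invariants using the holonomy data encoded by $\phi_P$. Since $U$ is a contractible slice, the proof of Lemma \ref{one-to-one cover} lets me choose $x\in U^{T}$ such that $Gx \hookrightarrow U$ is a $G$-homotopy equivalence, so that $U \simeq_{G} G/G_{x}$. By $G$-homotopy invariance of twisted equivariant K-theory and induction along $G_{x}\hookrightarrow G$,
\[
{}^{P_{U}}K_{G}^{q}(U) \;\cong\; {}^{P_{x}}K_{G_{x}}^{q}(\{x\}),
\]
and by the example in Section~2 this group vanishes for odd $q$ and is isomorphic to $R^{\tau_{x}}(T)^{W_{x}}$ for even $q$, where $\tau_{x}\colon \widetilde{G}_{x}\to G_{x}$ is the central extension produced by the local representation $\gamma_{x}$.

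Next I would analyze the right-hand side. By the maximal rank hypothesis and Hauschild's theorem, $U\cap X^{T}\simeq_{W} W/W_{x}$. Since $\pi\colon \widetilde{X^{T}}\to X^{T}$ is a covering, $\widetilde{U}=\pi^{-1}(U\cap X^{T})$ is $\widetilde{W}$-homotopy equivalent to a single orbit $\widetilde{W}/\widetilde{W}_{\tilde{x}}$, where $\tilde{x}$ is any chosen lift of $x$. The discussion preceding the theorem identifies $\widetilde{W}_{\tilde{x}}$ with the graph of the cocycle $w\mapsto a_{w}$ inside $\pi_{1}(X^{T})\rtimes W$, so that
\[
H^{0}_{\widetilde{W}}(\widetilde{U};\lR)\;\cong\;\lR(\widetilde{W}/\widetilde{W}_{\tilde{x}})\;=\;R(T)^{\widetilde{W}_{\tilde{x}}}.
\]

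The core step is to produce a canonical isomorphism $R^{\tau_{x}}(T)^{W_{x}}\cong R(T)^{\widetilde{W}_{\tilde{x}}}$. Because $H^{3}(BT;\Z)=0$, the central extension $\widetilde{T}_{x}\to T$ admits a splitting, yielding a ring isomorphism $R^{\tau_{x}}(T)\cong R(T)$; any two splittings differ by an element of $\Hom(T,\SS^{1})$, which acts on $R(T)$ by a unit, so the invariants do not depend on the choice. Under such a splitting, the $W_{x}$-action on $R^{\tau_{x}}(T)$ coming from $\widetilde{T}_{x}$ differs from the ordinary Weyl action by multiplication by the holonomy character $\phi_{P}(a_{w})\in \Hom(T,\SS^{1})$, as follows from Proposition \ref{pi1action} applied to the small loop in $X^T$ that realises the obstruction to equivariant triviality. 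This is precisely the action of $(a_{w},w)\in\widetilde{W}_{\tilde{x}}$ on $R(T)$ via the homomorphism $\phi_{P}$, so the $W_{x}$-invariants on the left and the $\widetilde{W}_{\tilde{x}}$-invariants on the right coincide. Composing these identifications gives $\psi_{U}$, and the $R(G)$-module structure on both sides factors through the canonical map $R(G)\cong R(T)^{W}\to R(T)^{\widetilde{W}_{\tilde{x}}}$, making $\psi_{U}$ an $R(G)$-module map.

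The main obstacle, and the step that will require the most care, is the naturality claim: given $U\subset V$ with chosen points $x_{U}\in U^{T}$ and $x_{V}\in V^{T}$, one has $W_{x_{U}}\subset W_{x_{V}}$ (after translating $x_{U}$ if necessary), and the restriction map on twisted K-theory is the inclusion $R^{\tau_{x_{V}}}(T)^{W_{x_{V}}}\hookrightarrow R^{\tau_{x_{U}}}(T)^{W_{x_{U}}}$ with respect to a compatible pair of splittings of the central extensions. The key point is that the cocycles $w\mapsto a_{w}$ obtained from $x_{U}$ and $x_{V}$ agree on $W_{x_{U}}$ up to a coboundary that is absorbed by an adjustment of the splittings, because the bundle $L$ of Proposition \ref{principalbundle} is locally trivial and the holonomy is defined on loops in $X^{T}$. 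Once this local compatibility of cocycles is checked, the square of restriction maps commutes on the nose, completing the verification.
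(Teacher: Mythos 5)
Your reduction of both sides to a single orbit is exactly right and matches the paper's strategy: ${}^{P_U}K_G^q(U)\cong R^{\tau_x}(T)^{W_x}$ and $H^0_{\widetilde W}(\widetilde U;\lR)\cong R(T)^{\widetilde W_{\tilde x}}$. The gap is in the core matching step. You trivialize the extension $\widetilde T_x\to T$ by an arbitrary local splitting and assert that, under this splitting, the intrinsic $W_x$-action on $R^{\tau_x}(T)$ becomes the ordinary Weyl action twisted by $\phi_P(a_w)$, citing ``the small loop in $X^T$ that realises the obstruction to equivariant triviality.'' But $a_w$ is an element of $\pi_1(X^T,x_0)$ defined by the global condition $(a_w,w)\cdot\tilde x=\tilde x$; concretely it is the class of the loop $(w\cdot\beta)\ast\beta^{-1}$ for a path $\beta$ from $x_0$ to $x$, and it depends on the choice of lift $\tilde x$. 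A bare splitting of $\widetilde T_x$ carries no information about $x_0$ or $\beta$, so it cannot by itself produce the character $\phi_P(a_w)$: for an arbitrary splitting the twist differs from $\phi_P(a_w)$ by the coboundary $(w\cdot\mu)\,\mu^{-1}$ of the character $\mu\in\Hom(T,\SS^1)$ comparing your splitting with the one transported from $x_0$. That coboundary does not change the isomorphism type of the invariants, but it does change the actual submodule of $R(T)$ you land in, and the target $H^0_{\widetilde W}(\widetilde U;\lR)=R(T)^{\widetilde W_{\tilde x}}$ is the invariants for the specific action defined via $\phi_P$. The missing ingredient is the transport isomorphism $\Phi_\beta\colon\widetilde T_{x_0}\to\widetilde T_x$ along the path obtained by projecting the unique (up to homotopy) path in $\widetilde{X^T}$ from $\tilde x_0$ to $\tilde x$, together with the holonomy identity $\Phi_{w\cdot\beta}=\Phi_\beta\circ\Phi_\alpha=w\Phi_\beta w^{-1}$ with $[\alpha]=a_w$; this is what proves that $\Phi_\beta$ is $\widetilde W_{\tilde x}$-equivariant and pins down the twist as exactly $\phi_P(a_w)$.

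The same omission undermines your naturality argument. You correctly flag the compatibility of the cocycles for $U\subset V$ as the delicate point, but the proposed fix (``the coboundary is absorbed by an adjustment of the splittings'') restates the problem rather than solving it: one must exhibit a coherent system of trivializations over all contractible slices simultaneously, and that is precisely what transporting everything from the single base point $x_0$ along paths in the simply connected cover provides. Once $\psi_U$ is defined via $\Phi_\beta$, both the independence of the choice of $\tilde x$ (two lifts differ by a deck transformation $v$, and conjugation by $v$ carries $R(T)^{\widetilde W_{\tilde x}}$ to $R(T)^{\widetilde W_{\tilde x_1}}$ compatibly) and the commutativity of the restriction square follow from the uniqueness of path classes in $\widetilde{X^T}$.
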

\begin{proof}[\bf Proof: ]
Suppose that $U$ is a contractible slice. Then we can find some $x\in U$ 
such that the inclusion map $Gx\hookrightarrow U$ is a  $G$-homotopy 
equivalence.  After replacing $x$ with $gx$ for a suitable $g$, we may assume that 
$x\in X^{T}$ so that $x\in U\cap X^{T}$.  Let $P_{x}$ be the restriction of 
$P$ to the orbit $Gx$.  Notice that the inclusion map $Gx\hookrightarrow U$  induces an 
isomorphism
\[
{}^{P_{U}}K_{G}^{0}(U)\cong {}^{P_{x}}K_{G}^{0}(Gx)\cong R^{\tau_{x}}(G_{x}).
\]
Here $\tau_{x}$ denotes the  central extension of the group 
$G_{x}$ defined by the local representation $\gamma_{x}\colon G_{x}\to PU(\H)$ 
used in the construction of $L$ and $R^{\tau_{x}}(G_{x})$ denotes the $\tau_{x}$-twisted 
representation ring of $G_{x}$. As in the case of untwisted K-theory, 
the inclusion map $T\subset G_{x}$ induces an isomorphism
\[
R^{\tau_{x}}(G_{x})\cong R^{\tau_{x}}(T)^{W_{x}}. 
\]
In the above equation by abuse of notation we also denote by 
$\tau_{x}$ the central extension 
\[
1\to \SS^{1}\to \widetilde{T}_{x}\stackrel{\tau_{x}}{\rightarrow} T\to 1.
\]
Note that $\widetilde{U}=\pi^{-1}(U\cap X^{T})$ is a 
$\widetilde{W}$-invariant open set in $\widetilde{X^{T}}$.  
Let $\tilde{x}$ be any point in $\widetilde{X^{T}}$ 
such that $\pi(\tilde{x})=x$. Thus $\tilde{x}\in \widetilde{U}$ and  the inclusion 
map $\widetilde{W}\tilde{x}\hookrightarrow \widetilde{U}$ is a $\widetilde{W}$-equivariant 
homotopy equivalence. Let  $ \tilde{\beta}\colon [0,1]\to \widetilde{X^{T}}$ be any path in 
$\widetilde{X^{T}}$ from $\tilde{x}_{0}$ to $\tilde{x}$, where $\tilde{x}_{0}$ is the base 
point in $\widetilde{X^{T}}$. Such a path exists and is unique up to path homotopy 
since $\widetilde{X^{T}}$ is simply connected. Let $\beta=\pi\circ \tilde{\beta}$. 
Notice that $\beta$ is a path in 
$X^{T}$ from $x_{0}$  to $x$ and   by definition $\tilde{\beta}$ is the unique lifting of 
$\beta$ to  $\widetilde{X^{T}}$ such that $\tilde{\beta}(0)=\tilde{x}_{0}$. The 
path $\beta$ induces an isomorphism of central extensions 
$\Phi_{\beta}\colon \widetilde{T}_{x_{0}}\to \widetilde{T}_{x}.$
Moreover, this isomorphism only depends on the path-homotopy class of $\beta$.
Therefore $\beta$ induces an isomorphism 
\[
\Phi_{\beta}\colon R^{\tau_{x}}(T)\rightarrow R^{\tau_{x_{0}}}(T)
\]
On the other hand, fix a trivialization
$\widetilde{T}_{x_{0}}\cong T\times \SS^{1}$.  
Thus we have an identification 
$R^{\tau_{x_{0}}}(T)=R(T)$. We show next that $\Phi_{\beta}$  induces an isomorphism 
\begin{equation}\label{isom1}
\Phi_{\beta}\colon R^{\tau_{x}}(T)^{W_{x}}\rightarrow
R(T)^{\widetilde{W}_{\tilde{x}}}.
\end{equation}
For this we give $R^{\tau_{x}}(T)$ an action of $\widetilde{W}_{\tilde{x}}$ 
as follows. Elements in $\widetilde{W}_{\tilde{x}}$ are pairs of the form 
$(a_{w},w)$ with $w\in W_{x}$ and $a_{w}\in \pi_{1}(X^{T})$. 
We let 
$(a_{w},w)$ act on $p\in R^{\tau_{x}}(T)$ by the assignment 
\[
(a_{w},w)\cdot p=w\cdot p;
\]
that is, the part corresponding to $\pi_{1}(X^{T})$ acts trivially on $R^{\tau_{x}}(T)$.
With this action we clearly have 
$R^{\tau_{x}}(T)^{W_{x}}
=R^{\tau_{x}}(T)^{\widetilde{W}_{\tilde{x}}}$. 
To prove (\ref{isom1}) it suffices to prove that the map  $\Phi_{\beta}$ is 
$\widetilde{W}_{\tilde{x}}$-equivariant.  To see this 
fix an element $(a_{w},w)\in \widetilde{W}_{\tilde{x}}$ so that 
$w\in W_{x}$ and $a_{w}\in \pi_{1}(X^{T})$. We need to prove that
\[ \Phi_{\beta}(w\cdot p)=\Phi_{\beta}((a_{w,}w)\cdot p)=(a_{w},w)\cdot \Phi_{\beta}(p) \]  
for every $p\in R^{\tau_{x}}(T)$. Let $\alpha\colon [0,1]\to X^{T}$ 
be a loop based at $x_{0}$ such that $a_{w}=[\alpha]\in \pi_{1}(X^{T})$. Let 
$\tilde{\alpha}\colon [0,1]\to \widetilde{X^{T}}$ be the unique lifting of $\alpha$ to 
$\widetilde{X^{T}}$ such that $\tilde{\alpha}(0)=\tilde{x}_{0}$.  Let 
$D_{a_{w}}\colon \widetilde{X^{T}}\to \widetilde{X^{T}}$ be the deck transformation 
that corresponds to $a_{w}$.  Since $(a_{w},w)\in \widetilde{W}_{\tilde{x}}$ 
we have 
\[
(a_{w},w)\cdot \tilde{x}
=a_{w}\cdot(w\cdot \tilde{x})=D_{a_{w}}(w\cdot \tilde{x})=\tilde{x}; 
\]
that is, $w\cdot \tilde{x}=D_{a_{w}^{-1}}(\tilde{x})$. Consider the paths 
$w\cdot \tilde{\beta}$ and $\tilde{\alpha}\ast (D_{a_{w}^{-1}}\circ \tilde{\beta})$. 
These two paths start 
at  $\tilde{x}_{0}$ and end at $w\cdot \tilde{x}$. Since $\widetilde{X^{T}}$ 
is simply--connected, these paths are path-homotopic, i.e.,
$w\cdot \tilde{\beta}\simeq \tilde{\alpha}\ast(D_{a_{w}^{-1}}\circ \tilde\beta)$. 
After composing 
these paths with the projection map $\pi$ we obtain 
$w\cdot \beta \simeq \alpha \ast \beta$ and this 
in turn  implies that $\Phi_{w\cdot \beta}=\Phi_{\beta}\circ \Phi_{\alpha}$. 
On the other hand, recall that the action of $W$ is compatible 
with the isomorphism $\Phi_{\beta}$. This means that 
$\Phi_{w\cdot \beta}=w\Phi_{\beta}(w^{-1})$. This together with the fact that 
$\Phi_{w\cdot \beta}=\Phi_{\beta}\circ \Phi_{\alpha}$ show that the following 
diagram is commutative 
\[
\xymatrix{
\widetilde{T}_{x_{0}}\ar[d]_{\Phi_{\beta}} \ar[r]^{w} &
\widetilde{T}_{x_{0}}\ar[d]_{\Phi_{w\cdot \beta}} \ar[r]^{\Phi_{\alpha}} &
\widetilde{T}_{x_{0}}\ar[d]_{\Phi_{\beta}}\\
\widetilde{T}_{x}\ar[r]_{w} &\widetilde{T}_{x}\ar[r]_{\text{id}} 
&  \widetilde{T}_{x}.
}
\]
In the above diagram the arrows labeled with $w$ represent the maps given 
by the action by  $w$. The commutativity of the previous diagram means precisely 
that $\Phi_{\beta}$ is $\widetilde{W}_{\tilde{x}}$-equivariant. The above shows 
that for every $G$-equivariant slice $U$ there is an isomorphism of $R(G)$-modules
\[
\psi_{U}\colon {}^{P_{U}}K_{G}^{0}(U)\cong R^{\tau_{x}}(T)^{W_{x}}\to 
R(T)^{\widetilde{W}_{\tilde{x}}}\cong H_{\widetilde{W}}^{0}(\tilde{U};\lR).
\]
The isomorphism $\psi_{U}\colon {}^{P_{U}}K_{G}^{0}(U)\to H_{\widetilde{W}}^{0}(\tilde{U};\lR)$ 
constructed above does not depend on the choice of $\tilde{x}\in \tilde{U}$. To see this, 
suppose that $\tilde{x}_{1}\in \tilde{U}$ is another element such that 
$\pi(\tilde{x}_{1})=\pi(\tilde{x})=x$. Let $\tilde{\beta}_{1} \colon [0,1]\to \widetilde{X^{T}}$ be a path in 
$\widetilde{X^{T}}$ from $\tilde{x}_{0}$ to $\tilde{x}_{1}$ and $\beta_{1}=\pi\circ \tilde{\beta}_{1}$. 
Since $\pi \colon \widetilde{X^{T}}\to X^{T}$ 
is the universal cover and $\pi(\tilde{x}_{1})=\pi(\tilde{x})$ we can find a 
unique deck transformation $D$ such that $D(\tilde{x})=\tilde{x}_{1}$. If we identify
$\pi_1(X^T)$ with the group of deck transformations as usual, then the element 
$v:=(D,1)\in \widetilde{W}$ is such that $v\cdot \tilde{x}=\tilde{x}_{1}$.  
As $v\cdot \tilde{x}=\tilde{x}_{1}$ we have 
$\widetilde{W}_{\tilde{x}_{1}}=v\widetilde{W}_{\tilde{x}}v^{-1}$ and 
the action of $v$ on $R(T)$ induces an isomorphism
\[
v \colon R(T)^{\widetilde{W}_{\tilde{x}}}\to R(T)^{\widetilde{W}_{\tilde{x}_{1}}}.
\]
Furthermore we have a commutative diagram
\[
\xymatrix{
{}^{P_{U}}K_{G}^{0}(U)\cong R^{\tau_{x}}(T)^{W_{x}} \ar[r]^{\qquad \Phi_{\beta}}\ar[rd]_{\Phi_{\beta_{1}}} 
& R(T)^{\widetilde{W}_{\tilde{x}}} \ar[r]^{\cong}\ar[d]_{v} &
H_{\widetilde{W}}^{0}(\tilde{U};\lR) \\ 
& R(T)^{\widetilde{W}_{\tilde{x}_{1}}}. \ar[ru]_{\cong}&  
}
\]
This proves that $\psi_{U}$ does not depend on the choice of the element $\tilde{x}$.  
Finally, if $U$ and $V$ are two $G$-equivariant slices with $U\subset V$ it can easily be 
seen that the following diagram commutes 
\begin{equation}\label{commdiag}
\xymatrix{
{}^{P_{V}}K_{G}^{0}(V)\ar[d]\ar[r]^{\psi_{V}} & 
H^{0}_{\widetilde{W}}(\widetilde{V};\lR) \ar[d]\\
{}^{P_{U}}K_{G}^{0}(U)\ar[r]^{\psi_{U}} & 
H^{0}_{\widetilde{W}}(\widetilde{U};\lR). 
}
\end{equation}
In the above diagram the vertical arrows are the corresponding restriction maps.  
The case $q \neq 0$ follows by composing with the natural periodicity isomorphisms.
\end{proof} 

\medskip

Using the previous theorem we can prove the main theorem of this section.

\begin{theorem}\label{ss integer coeff}
Suppose $G$ is a compact Lie group acting on a space $X$ with 
connected maximal rank isotropy subgroups and with a fixed point $x_{0}$. Let 
$p\colon P\to X$ be a $G$-stable $G$-equivariant principal $PU(\H)$-bundle 
that is trivial over $x_{0}$. Assume that $X^{T}$ admits a $W$-equivariant good cover. 
Then there
is a spectral sequence with $E_{2}$-term given by 
\begin{equation*}
E_{2}^{p,q}= \left\{
\begin{array}{ccc}
H^{p}_{\widetilde{W}}(\widetilde{X^{T}};\lR) &\text{ if } &q \text{ is even},\\
0& \text{ if } &q \ \ \text{ is odd}
\end{array}
\right.
\end{equation*}
which converges to ${}^{P}K_{G}^{*}(X)$.
\end{theorem}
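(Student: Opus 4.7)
The strategy is to identify the Segal-type spectral sequence \eqref{Segalss} associated to a $G$-equivariant good cover of $X$ with the \v{C}ech complex computing Bredon cohomology of $\widetilde{X^T}$ with coefficients in $\lR$. First, I would fix a $W$-equivariant good cover $\U^T = \{U_i^T\}_{i\in\I}$ of $X^T$, and apply Lemma \ref{one-to-one cover} to pass back and forth between this cover, the associated $G$-equivariant good cover $\U = \{U_i\}_{i\in\I}$ of $X$, and the $\widetilde{W}$-equivariant good cover $\widetilde{\U} = \{\widetilde{U}_i\}_{i\in\I}$ of $\widetilde{X^T}$ obtained by pulling back along $\pi\colon \widetilde{X^T}\to X^T$. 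The bundle $p\colon P\to X$ then gives rise via \eqref{Segalss} to a spectral sequence converging to $^PK_G^*(X)$ whose $E_1$-page consists of products of twisted equivariant K-theory groups of the contractible slices $U_{i_1,\dots,i_p}$.

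Next, I would apply Theorem \ref{identifycontract} termwise. Because each nonempty intersection $U_{i_1,\dots,i_p}$ is a contractible slice, for $q$ even the group $^{P_{i_1,\dots,i_p}}K_G^q(U_{i_1,\dots,i_p})$ is canonically isomorphic as an $R(G)$-module to $H^0_{\widetilde{W}}(\widetilde{U}_{i_1,\dots,i_p};\lR)$; for $q$ odd it vanishes, by the computation on orbit-type neighborhoods $^PK_G^1(\{\ast\}) = 0$ noted in the example following the definition of twisted $G$-equivariant K-theory. This immediately kills every other row of $E_1$ and identifies the $p$-th column with $\prod_{i_1\le\cdots\le i_p} H^0_{\widetilde{W}}(\widetilde{U}_{i_1,\dots,i_p};\lR)$.

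The crucial step is matching the differential $d_1$ with the \v{C}ech coboundary of the Bredon complex. For this I would invoke the naturality square at the end of Theorem \ref{identifycontract}: for any inclusion $U_{i_1,\dots,i_{p+1}} \subset U_{i_1,\dots,\widehat{i_j},\dots,i_{p+1}}$ the restriction on twisted K-theory corresponds under $\psi$ to the restriction on $H^0_{\widetilde{W}}(-;\lR)$. Taking alternating sums then identifies $d_1$ with the \v{C}ech differential on the cover $\widetilde{\U}$ with coefficients in the functor $\widetilde{U}\mapsto H^0_{\widetilde{W}}(\widetilde{U};\lR)$. Since $\widetilde{\U}$ is a $\widetilde{W}$-equivariant good cover of $\widetilde{X^T}$ (so every finite intersection is equivariantly equivalent to an orbit $\widetilde{W}/\widetilde{W}_{\tilde{x}}$), standard arguments identify the resulting \v{C}ech cohomology of the cover with the Bredon cohomology $H^p_{\widetilde{W}}(\widetilde{X^T};\lR)$, yielding the claimed $E_2$-page.

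The principal obstacle is the second step: verifying carefully that the isomorphisms $\psi_U$ assemble into a map of cochain complexes, i.e., that they genuinely intertwine the restriction maps in twisted equivariant K-theory with the ones in $H^0_{\widetilde{W}}(-;\lR)$ on \emph{all} nonempty intersections, not just pairs. This requires checking compatibility of the path-translation isomorphisms $\Phi_\beta$ of Proposition \ref{pi1action} under nested inclusions of slices, and that the independence of $\psi_U$ from the choice of lift $\tilde{x}$ established in Theorem \ref{identifycontract} is strong enough to make the collection of squares commute simultaneously; convergence of the spectral sequence to $^PK_G^*(X)$ then follows from the formal setup of \cite[Theorem 22.4.4]{MS} applied to the cover $\U$.
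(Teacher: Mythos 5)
Your proposal follows essentially the same route as the paper's proof: both obtain the cover via Lemma \ref{one-to-one cover}, invoke \cite[Theorem 22.4.4]{MS} for the spectral sequence with $E_1$-term \eqref{Segalss}, kill the odd rows using $^{P}K^{1}_{G_{x}}(\{*\})=0$ on contractible slices, and use Theorem \ref{identifycontract} together with its naturality square to identify $E_1^{*,q}$ with the \v{C}ech--Bredon complex of the good cover $\widetilde{\U}$. The compatibility issue you flag as the principal obstacle is exactly what the commutative diagram at the end of Theorem \ref{identifycontract} (and the independence of $\psi_U$ from the choice of lift $\tilde{x}$) is designed to settle, so your plan is correct and matches the paper's argument.
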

\begin{proof}[\bf Proof: ]
Since we are assuming that $X^{T}$ has a $W$-equivariant good cover 
we can find a $G$-equivariant good cover $\U=\{U_i\}_{i\in \I}$ on $X$ by  
Lemma \ref{one-to-one cover}. By \cite[Theorem 22.4.4]{MS}, associated to this cover we have a 
spectral sequence whose $E_{1}^{p,q}$-term is given by equation (\ref{Segalss}). 
For every sequence of elements  $i_{1}\le \dots\le i_{p}$ in $\I$ with 
$U_{i_{1},\dots, i_{p}}$ nonempty 
we have a $G$-homotopy equivalence 
$U_{i_{1},\dots, i_{p}}\simeq G/G_{i_{1},\dots, i_{p}}$. 
This implies 
\[
{}^{P_{i_{1},\dots,i_{p}}}K_{G}^{q}(U_{i_{1},\dots, i_{p}})
\cong {}^{P_{i_{1},\dots,i_{p}}}K_{G_{i_{1},\dots, i_{p}}}^{q}(\{*\})=0
\] 
for $q$ odd. This trivially implies that  $E_{2}^{p,q}=0$ for odd values of $q$. Assume 
now that $q$ is even. Let $\widetilde{\U}$ be the associated $\widetilde{W}$-equivariant 
good cover of 
$\widetilde{X^{T}}$ given by  Lemma \ref{one-to-one cover}.  For any 
sequence $i_{1}\le \dots\le i_{p}$ of elements in $\I$ 
the map $\psi_{U_{i_{1},\dots,i_{p}}}$ constructed in Theorem \ref{identifycontract} provides 
an isomorphism of $R(G)$-modules 
\[
\psi_{U_{i_{1},\dots,i_{p}}}\colon 
{}^{P_{i_{1},\dots,i_{p}}}K_{G}^{q}(U_{i_{1},\dots, i_{p}})
\to H_{\widetilde{W}}^{0}(\widetilde{U}_{i_{1},\dots, i_{p}};\lR).
\]
The naturality of these isomorphism with respect to inclusions between contractible
slices implies that the different maps $\psi_{U_{i_{1},\dots,i_{p}}}$  define an isomorphism 
between the cochain complex $E_{1}^{*,q}$ in the spectral sequence and  
the cochain complex $\{D^{p}_{\widetilde{U}}=
H_{\widetilde{W}}^{0}(\widetilde{U}_{i_{1},\dots, i_{p}};\lR)\}_{p}$ 
whose cohomology computes  $H^{p}_{\widetilde{W}}(\widetilde{X^{T}};\lR)$. 
In particular, this shows that for every even integer $q$ we have an isomorphism of $R(G)$-modules 
 \[
 E_{2}^{p,q}\cong H^{p}_{\widetilde{W}}(\widetilde{X^{T}};\lR). \qedhere
 \]
\end{proof}

\medskip

Next we study the rational coefficients analogue of the spectral sequence studied above 
for a $G$-space with maximal rank isotropy subgroups. 
To start assume that $Y$ is a $W$-CW complex of finite type (we are mainly 
interested in the case where $Y=X^{T}$ for a $G$-CW complex $X$ on which $G$ acts 
with connected maximal rank isotropy). 
Assume that $Y$ has a base point $y_{0}$ that is fixed under the action of $W$. Therefore 
$W$ acts on   $\pi_{1}(Y):=\pi_{1}(Y,y_{0})$ by automorphisms and we can form the semi-direct 
product $\widetilde{W}:=\pi_{1}(Y)\rtimes W$. Let $\pi\colon \widetilde{Y}\to Y$ be the universal 
cover of $Y$. Then $\pi_{1}(Y)$ acts on $\widetilde{Y}$ via deck transformations and as above 
we can lift the action of $W$ on $Y$ to obtain an action of $\widetilde{W}$ on $\widetilde{Y}$.
Fix a $W$-equivariant  homomorphism 
$\phi\colon \pi_{1}(Y)\to \Hom(T,\SS^{1})$. Using this homomorphism we can obtain an 
action of  $\widetilde{W}$ on $R(T)$ defined in the same way as in the case of $Y=X^{T}$.
With this action we can define a new coefficient system  $\lR_{\Q}$ as follows. 

For any compact Lie group $H$ denote by $R(H)_{\Q}$ the complex representation ring of $H$ 
with rational coefficients; that is $R(H)_{\Q}:=R(H)\otimes \Q$. Then the value of $\lR_{\Q}$ 
at an orbit of
the form $\widetilde{W}/\widetilde{W_{i}}$ is defined as
\[
\lR_{\Q}(\widetilde{W}/\widetilde{W_{i}})
=R(T)_{\Q}^{\widetilde{W_{i}}}.
\]
We can regard $\pi_{1}(Y)$ as a normal subgroup of $\widetilde{W}$ and in this way 
we can restrict the coefficient system $\lR_{\Q}$ to obtain 
a coefficient system defined on the orbit category of  $\pi_{1}(Y)$.  

\begin{theorem}\label{Bredon rational}
Suppose that $G$ is a compact connected Lie group. Let $Y$ be a $W$-CW complex 
of finite type. For every $W$-equivariant homomorphism $\phi\colon \pi_{1}(Y)\to \Hom(T,\SS^{1})$ 
there is an isomorphism of $R(G)_{\Q}$-modules 
\[
H^{*}_{\widetilde{W}}(\widetilde{Y};\lR_{\Q})
\cong H_{\pi_{1}(Y)}^{*}(\widetilde{Y};\lR_{\Q})^{W}.
\]
\end{theorem}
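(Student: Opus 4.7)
The plan is to establish the identification at the cochain level, and then pass to cohomology using that $W$ is finite and we work with $\Q$-coefficients. Since $Y$ is a $W$-CW complex with base point fixed by $W$, the universal cover $\widetilde Y$ inherits a $\widetilde W$-CW structure in which $\pi_1(Y)$ acts freely by deck transformations and $W$ acts by liftings, as described just before the statement of the theorem.

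First I would set up the Bredon cochain complex $C^\bullet_{\pi_1(Y)}(\widetilde Y; \lR_\Q)$, viewing $\lR_\Q$ as a coefficient system on the orbit category of $\pi_1(Y)$ via restriction from $\widetilde W$. Because $\pi_1(Y) \triangleleft \widetilde W$, conjugation by any lift of $w \in W$ to $\widetilde W$ permutes $\pi_1(Y)$-orbits of cells of $\widetilde Y$ and sends $\pi_1(Y)$-stabilizers to $\pi_1(Y)$-stabilizers. Combined with the fact that $\phi$ is $W$-equivariant (so $W$ acts compatibly on $\lR_\Q$ through the semi-direct product structure of $\widetilde W$), this produces a natural $W$-action on $C^\bullet_{\pi_1(Y)}(\widetilde Y; \lR_\Q)$ by chain maps.

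The key step is then to produce a natural isomorphism of cochain complexes
\[ C^\bullet_{\widetilde W}(\widetilde Y; \lR_\Q) \cong \bigl(C^\bullet_{\pi_1(Y)}(\widetilde Y; \lR_\Q)\bigr)^{W}. \]
This follows by unwinding definitions: a $\widetilde W$-orbit of $p$-cells in $\widetilde Y$ corresponds bijectively to a $W$-orbit of $\pi_1(Y)$-orbits of $p$-cells, and for a representative cell $c$ the $\widetilde W$-stabilizer fits into an extension of the $W$-stabilizer of the $\pi_1(Y)$-orbit through $c$ by $\pi_1(Y)_c$. The invariants appearing in $\lR_\Q$ at each corresponding orbit match up under this correspondence, once the $W$-action on $\lR_\Q$ is accounted for via $\phi$.

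Finally, since $W$ is a finite group and $\lR_\Q$ takes values in $\Q$-vector spaces, the averaging idempotent $\frac{1}{|W|}\sum_{w \in W} w$ makes the functor $(-)^W$ exact on $\Q[W]$-modules. Hence $W$-invariants commute with cohomology, and taking $H^*$ in the cochain identification above yields the claimed isomorphism of $R(G)_\Q$-modules. The main obstacle is the cochain-level identification, which is essentially bookkeeping about stabilizers under a semi-direct product and the compatibility of the twist in $\lR_\Q$; a more conceptual alternative is to invoke a Lyndon--Hochschild--Serre-type spectral sequence for the normal subgroup $\pi_1(Y) \triangleleft \widetilde W$, which collapses at $E_2$ to the same answer because $H^p(W; -)$ vanishes for $p > 0$ on $\Q[W]$-modules.
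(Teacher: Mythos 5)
Your proposal is correct and follows essentially the same route as the paper: the paper likewise identifies $C^{*}_{\widetilde{W}}(\widetilde{Y};\lR_{\Q})$ with the $W$-invariants of the $\pi_{1}(Y)$-equivariant cochain complex (using that $\lR_{\Q}$ is induced by the $\widetilde{W}$-module $R(T)_{\Q}$, so both sides become $\Hom$-complexes and the identification is $(D^{*})^{\widetilde{W}}\cong((D^{*})^{\pi_{1}(Y)})^{W}$), and then commutes $(-)^{W}$ with cohomology. The paper packages that last step as the collapse of the two hypercohomology spectral sequences for $H^{*}(W;E^{*})$, which is just a formal rephrasing of your exactness-of-invariants argument over $\Q$.
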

\begin{proof}[\bf Proof: ]
To start notice that the coefficient system $\lR_{\Q}$ is induced 
by the $\widetilde{W}$-module $R(T)_{\Q}$. Then
as pointed out in \cite[I. 9]{Bredon} there is an isomorphism of
cochain complexes
\[
C_{\widetilde{W}}^{*}(\widetilde{Y};\lR_{\Q})\cong
\Hom_{\Z[\widetilde{W}]}(C_{*}(\widetilde{Y}),R(T)_{\Q})
\]
and in particular 
$H_{\widetilde{W}}^{*}(\widetilde{Y};\lR_{\Q})\cong
H^{*}(\Hom_{\Z[\widetilde{W}]}(C_{*}(\widetilde{Y}),R(T))\otimes \Q)$. 
Define the cochain complex  
$D^{*}=\Hom(C_{*}(\widetilde{Y}),R(T)_{\Q})$. This cochain complex has a linear
action of $\widetilde{W}$ defined by $(w\cdot f)(x)=wf(w^{-1}x)$. Under this
action we have an isomorphism of cochain complexes
\[
\Hom_{\Z[\widetilde{W}]}(C_{*}(\widetilde{Y}),R(T)_{\Q})= (D^{*})^{\widetilde{W}}
\]
and thus 
$H_{\widetilde{W}}^{*}(\widetilde{Y};\lR_{\Q})
\cong H^{*}( (D^{*})^{\widetilde{W}})$.
On the other hand, define the cochain complex 
$E^{*}=\Hom_{\Z[\pi_{1}(Y)]}(C_{*}(\widetilde{Y}),R(T)_{\Q})$. This cochain 
complex is defined precisely to obtain 
$H^{*}_{\pi_{1}(Y)}(\widetilde{Y};\lR_{\widetilde{W}}\otimes \Q)=H^{*}(E^{*})$. We can 
see $\pi_{1}(Y)$ as a normal subgroup of $\widetilde{W}$ and we have 
$\widetilde{W}/\pi_{1}(Y)=W$. Also, by definition  $(D^{*})^{\pi_1(Y)}=E^{*}$. 
Notice that  we have a natural isomorphism of $R(G)_{\Q}$-modules
\[
(D^{*})^{\widetilde{W}}\cong \left[(D^{*})^{\pi_{1}(Y)} \right]^{W}\cong (E^{*})^{W}.
\]
We conclude that there is a natural isomorphism of $R(G)_{\Q}$-modules
\[
H^{*}_{\widetilde{W}}(\widetilde{Y};\lR_{\Q})
=H^{*}((D^{*})^{\widetilde{W}})\cong H^{*}((E^{*})^{W}).
\]
Consider now $H^{*}(W;E^{*})$; as usual, there are two
spectral sequences computing this group cohomology with coefficients
in a cochain complex. On the one hand, we have
\[
E_{2}^{p,q}=H^{p}(W;H^{q}(E^{*}))
\Longrightarrow H^{p+q}(W;E^{*}).
\]
Since $E^{*}$ is a cochain complex over $\Q$ and $W$ is a finite group 
it follows that
\begin{equation*}
E_{2}^{p,q}=H^{p}(W;H^{q}(E^{*}))\cong \left\{
\begin{array}{ccc}
H^{q}(E^{*})^{W}
&\text{ if } &p=0,\\
0& \text{ if } &p>0 .
\end{array}
\right.
\end{equation*}
On the other hand, we have a spectral sequence
\[
E_{1}^{p,q}=H^{q}(W;E^{p})
\Longrightarrow H^{p+q}(W;E^{*}).
\]
with the differential $d_{1}$ induced by the differential
of the cochain complex $E^{*}$. In this case
\begin{equation*}
H^{q}(W;E^{p})\cong \left\{
\begin{array}{ccc}
(E^{p})^{W}
&\text{ if } &q=0,\\
0& \text{ if } &q>0.
\end{array}
\right.
\end{equation*}
Thus the $E_{2}$-term of this spectral sequence is given by
\begin{equation*}
E_{2}^{p,q}= \left\{
\begin{array}{ccc}
H^{p}((E^{*})^{W})
&\text{ if } &q=0,\\
0& \text{ if } &q>0.
\end{array}
\right.
\end{equation*}
Both of these spectral sequences trivially collapse on the $E_{2}$-term without
extension problems and both converge to $H^{*}(W;E^{*})$.
It follows that there is an isomorphism of $R(G)_{\Q}$ modules
\begin{equation}
H_{\widetilde{W}}^{*}(\widetilde{Y};\lR_{\Q}) 
=H^{*}((E^{*})^{W})
\cong H^{*}(E^{*})^{W}=H^{*}_{\pi_{1}(Y)}(\widetilde{Y};\lR_{\Q})^{W}. \qedhere
\end{equation}
\end{proof}

\medskip

\begin{theorem}\label{ss rational coeff}
Suppose $G$ is a compact Lie group acting on space $X$ with 
connected maximal rank isotropy subgroups and with a fixed
point $x_{0}$. Let 
$p\colon P\to X$ be a $G$-equivariant principal $PU(\H)$-bundle that 
is trivial over $x_{0}$.  Assume that $X^{T}$ is a $W$-CW complex of finite 
type that admits  a 
$W$-equivariant good cover.  Then there is a spectral sequence
with $E_{2}$-term given by  
\begin{equation*}
E_{2}^{p,q}= \left\{
\begin{array}{ccc}
H^{p}_{\pi_{1}(X^{T})}(\widetilde{X^{T}};\lR_{\Q})^{W} 
&\text{ if } &q \text{ is even},\\
0& \text{ if } &q \ \ \text{ is odd}
\end{array}
\right.
\end{equation*}
which converges to ${}^{P}K^{*}_{G}(X)\otimes \Q$.
\end{theorem}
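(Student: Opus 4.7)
The strategy is to combine the two previous results with a flatness argument. Specifically, I would begin with the integral spectral sequence provided by Theorem \ref{ss integer coeff}, whose $E_2$-term is $H^p_{\widetilde{W}}(\widetilde{X^T};\lR)$ for $q$ even and vanishes for $q$ odd, and converges to ${}^P K_G^*(X)$. My plan is to rationalize this spectral sequence and then rewrite the rationalized $E_2$-term using Theorem \ref{Bredon rational}.

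First I would observe that because $X^T$ has a $W$-equivariant good cover (and hence $X$ admits a $G$-equivariant good cover by Lemma \ref{one-to-one cover}), the spectral sequence is obtained as the spectral sequence of a finite-type filtration whose $E_1$-page is a product of twisted equivariant $K$-groups of orbits. Each such term is a finitely generated $R(G)$-module, in particular a $\Z$-module, so tensoring the $E_1$-page with the flat $\Z$-module $\Q$ preserves cohomology of the $d_1$-complex. This produces a new spectral sequence whose $E_2$-term is $H^p_{\widetilde{W}}(\widetilde{X^T};\lR)\otimes \Q$ for $q$ even and $0$ for $q$ odd, and which converges to ${}^P K_G^*(X)\otimes \Q$ (convergence is preserved under rationalization since $\Q$ is flat and the filtration has the required finiteness).

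Next I would identify the rationalized coefficient system. By definition $\lR_{\Q}(\widetilde{W}/\widetilde{W_i}) = R(T)_{\Q}^{\widetilde{W_i}}$, and since $\widetilde{W_i}$ is finite, taking invariants commutes with tensoring with $\Q$, giving a natural isomorphism $\lR\otimes \Q \cong \lR_{\Q}$ of coefficient systems on the orbit category of $\widetilde{W}$. This yields $H^p_{\widetilde{W}}(\widetilde{X^T};\lR)\otimes \Q \cong H^p_{\widetilde{W}}(\widetilde{X^T};\lR_{\Q})$, where again flatness of $\Q$ over $\Z$ is used to commute the tensor with the cochain cohomology. The $W$-equivariant homomorphism $\phi_P\colon \pi_1(X^T)\to \Hom(T,\SS^1)$ supplied by Proposition \ref{pi1action} provides the $\widetilde{W}$-module structure on $R(T)_{\Q}$ needed as input for Theorem \ref{Bredon rational}.

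Finally, I would apply Theorem \ref{Bredon rational} directly with $Y = X^T$ and $\phi = \phi_P$ to obtain the identification
\begin{equation*}
H^p_{\widetilde{W}}(\widetilde{X^T};\lR_{\Q}) \cong H^p_{\pi_1(X^T)}(\widetilde{X^T};\lR_{\Q})^W,
\end{equation*}
yielding the desired $E_2$-term. The main potential obstacle is the rationalization step: one must ensure that the integral spectral sequence of Theorem \ref{ss integer coeff} behaves well under $-\otimes \Q$ both on the $E_2$-page (flatness of $\Q$ to commute with Bredon cohomology) and at the level of convergence (which follows from the conditional convergence and the fact that filtration degree is bounded on each diagonal since $X^T$ is of finite type). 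Once these technicalities are in place, the rest is a direct assembly of Theorems \ref{ss integer coeff} and \ref{Bredon rational}.
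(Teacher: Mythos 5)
Your proposal is correct and follows essentially the same route as the paper: rationalize the \v{C}ech-type spectral sequence of the $G$-equivariant good cover (using the finite-type hypothesis for convergence), identify the even-degree $E_2$-term with $H^{p}_{\widetilde{W}}(\widetilde{X^{T}};\lR_{\Q})$ exactly as in Theorem \ref{ss integer coeff}, and then apply Theorem \ref{Bredon rational} with $Y=X^{T}$ and $\phi=\phi_{P}$. The flatness and finiteness points you flag are precisely the ones the paper handles via the finite-type assumption on $X^{T}$.
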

\begin{proof}

Take the $G$-equivariant principal 
$PU(\H)$-bundle $p\colon P\to X$ and let 
$\phi\colon \pi_{1}(X^{T})\to \Hom(T,\SS^{1})$ be the homomorphism provided by 
Proposition \ref{pi1action}. This way we can construct the coefficient system 
$\lR_{\Q}$ as explained above.
Now let $\U=\{U_i\}_{i\in \I}$ be the corresponding  $G$-equivariant good cover on $X$ 
given by Lemma \ref{one-to-one cover}. As above, associated to this 
cover  we have a spectral sequence whose $E_{1}^{p,q}$-term is given by
\[
E_{1}^{p,q}=\prod_{i_{1}\le \dots\le i_{p}}
{}^{P_{i_{1},\dots,i_{p}}}K_{G}^{q}(U_{i_{1},\dots, i_{p}})\otimes \Q
\]
The differential $d_{1}\colon E_{1}^{p,q}\to E_{1}^{p+1,q}$ is given by the alternating sum 
of the  corresponding  restriction maps.  Since $X$ is a $G$-CW complex of finite 
type, this spectral sequence converges to  ${}^{P}K_{G}^{p+q}(X)\otimes \Q$.
When  $q$ is odd, we have $E_{2}^{p,q}=0$ in the same way as with integer coefficients. 
When $q$ is even, proceeding in the same way as in Theorem \ref{ss integer coeff} we see 
that the $E_{2}$-term in this spectral sequence is given by 
\[
E_{2}^{p,q}=H^{p}_{\widetilde{W}}(\widetilde{X^{T}};\lR_{\Q}). 
\]
Using Theorem \ref{Bredon rational} we obtain an isomorphism of $R(G)_{\Q}$-modules. 
\[
H^{*}_{\widetilde{W}}(\widetilde{X^{T}};\lR_{\Q})
\cong H_{\pi_{1}(X^{T})}^{*}(\widetilde{X^{T}};\lR_{\Q})^{W}. \qedhere
\]
\end{proof}

\begin{remark}
Suppose that the cohomology class $\eta_{P}\in H^{3}_{G}(X;\Z)$  associated to $p\colon P\to X$ 
is trivial. In this case the homomorphism $\phi\colon \pi_{1}(X^{T})\to \Hom(T,\SS^{1})$ 
corresponding to $p$ is also trivial. Therefore we have a natural isomorphism 
\[
H^{p}_{\pi_{1}(X^{T})}(\widetilde{X^{T}};\lR_{\Q})^{W} 
\cong H^{p}(X^{T};R(T)_{\Q})^{W} 
\]
and the latter is isomorphic to $H^{*}(X^{T};\Q)\otimes R(G)$ as a module 
over $R(G)_{\Q}$ by \cite[Theorem 4.3]{AG}. In this case 
the spectral sequence collapses at the 
$E_{2}$-term  without extension problems by \cite[Theorem 5.4]{AG}.
\end{remark}

\section{Root systems, actions and cohomology}
\label{Section 4}

In this section we provide some algebraic background that will be essential for our calculations
in twisted equivariant K--theory. We start by setting up some notation. From now on, $G$ will 
denote a compact, simple and simply connected Lie group of rank $r$. Let $\g$ be the Lie algebra 
of $G$ and denote by $\g_{\C}$ its complexification. We fix a maximal torus $T$ in $G$ with Lie 
algebra $\t$ and denote by $W=N_{G}(T)/T$ the corresponding Weyl group.
We are going to see roots as $\C$-linear functions $\alpha\colon \t_{\C}\to \C$. 
The restriction of a root 
to $\t$ is purely imaginary and thus we can also see a root as an $\R$-linear function
$\alpha\colon \t\to i\R$. Let $B(\cdot,\cdot)$ denote the Killing form. Since $G$ is assumed 
to be simple, $B$ is a non-degenerate, negative definite form on $\t$. For each 
root $\alpha$ we can find a unique element $h_{\alpha}\in \t_{\C}$ such that 
$B(H,h_{\alpha})=\alpha(H)$ for every $H\in \t_{\C}$. Define 
$H_{\alpha}=\frac{2h_{\alpha}}{B(h_{\alpha},h_{\alpha})}$ so that $\alpha(H_{\alpha})=2$. 
If we identify $\t_{\C}$ canonically with $\t_{\C}^{**}$ 
then $H_{\alpha}$ corresponds to the 
element $\alpha^{\vee}$ in the inverse root system. 
Consider $\exp_{|\t}\colon \t\to T$, the 
restriction of the exponential map to $\t$ and let 
$\Lambda:=\ker\left(\exp_{|\t}\colon \t\to T\right)$ 
be the unit lattice. For each root $\alpha\in \Phi$ define  
$K_{\alpha}:=2\pi iH_{\alpha}\in \t$. 
Since $G$ is simply connected, the vectors 
$\{ K_{\alpha}\}_{\alpha\in\Phi}$ span the unit lattice by 
\cite[Corollary 1 IX  \S 4 no. 6]{Bourbaki1}. 
Let  $\Delta=\{\alpha_{1},\dots,\alpha_{r}\}$ 
be a fixed set of simple roots and  consider the fundamental weights
$\{\omega_{1},\dots, \omega_{r}\}$ corresponding to the simple roots. 
These fundamental weights are $\R$-linear maps $\omega_{j}\colon \t\to i\R$ such that 
$\omega_{j}(K_{\alpha_{k}})=2\pi i \delta_{kj}$,
where as usual $\delta_{kj}$ denotes the Kronecker delta function and 
$1\le k,j\le r$. Since $G$ is simply connected, the set 
$\{\omega_{1}/2 \pi i,\dots, \omega_{r}/2 \pi i\}$ is a basis for the lattice 
$\Hom(\Lambda,\Z)$. Also, by \cite[Proposition 8.18]{Hall} it follows that 
the vectors $K_{\alpha_{1}},\dots, K_{\alpha_{r}}$ 
form a basis for the integral lattice $\Lambda$. Let 
$\alpha_{0}$ be the highest root. 
We can write $\alpha_{0}$ in the form 
\[
\alpha_{0}= n_{1}\alpha_{1}+n_{2}\alpha_{2}+\cdots+n_{r}\alpha_{r}
\]
for some integers $n_{1},n_{2},\dots,n_{r}\ge 1$. 
The number $h=n_{1}+n_{2}+\cdots+n_{r}+1$ 
is the Coxeter number of the group $G$. On the other hand, 
we can write $K_{\alpha_{0}}$ in the form
\[
K_{\alpha_{0}}=n_{1}^{\vee}K_{\alpha_{1}}+n_{2}^{\vee}K_{\alpha_{2}}
+\cdots+n_{r}^{\vee}K_{\alpha_{r}}
\]
for some integers $n_{1}^{\vee},n_{2}^{\vee},\dots,n_{r}^{\vee}\ge 1$. The number 
$h^{\vee}=n_{1}^{\vee}+n_{2}^{\vee}+\cdots+n_{r}^{\vee}+1$ 
is the dual Coxeter number of the group $G$. 
If  $\rho:=\frac{1}{2}\left(\sum_{\alpha>0}\alpha\right)=\sum_{i=1}^{r}\omega_{i}$, then 
as $\rho(K_{\alpha_{j}})=2\pi i$ for every $1\le j\le r$ we have 
\[
\rho(K_{\alpha_{0}})=2\pi i(n_{1}^{\vee}+n_{2}^{\vee}+\cdots+n_{r}^{\vee})=2\pi i(h^{\vee}-1).
\]
For every $1\le i \le r$ define $d_{i}=d_{\alpha_{i}}:=
\frac{B(h_{\alpha_{0}},h_{\alpha_{0}})}{B(h_{\alpha_{i}},h_{\alpha_{i}})}$. It can 
be seen that each $d_{i}$ is an integer such that $n_{i}=d_{i}n_{i}^{\vee}$. 
Let $\mathfrak{A}(\Delta)$ be the (closed) Weyl alcove that is contained in the 
(closed) Weyl chamber $\mathfrak{C}(\Delta)$ determined by $\Delta$ and that 
contains $0\in \g$. The alcove $\mathfrak{A}(\Delta)$ is bounded by the hyperplanes with 
equations $\alpha_{j}=0$ for $1\le j\le r$ and $\alpha_{0}=2\pi i$. 
As a topological space $\mathfrak{A}(\Delta)$ is an $r$-simplex whose vertices will be labeled 
by $v_{0}, v_{1},\dots,v_{r}$ in such 
a way that $v_{0}=0$ and $v_{j}$ is the vertex that does not lie in the hyperplane 
$\alpha_{j}=0$ for $1\le j\le r$.  For every $1\le j\le r$  we have  
$\alpha_{0}(v_{j})=2\pi i$ so that
\[
\alpha_{j}(v_{j})=\frac{2\pi i}{n_{j}} \text{ for all } 1\le j\le r
\]
and $\alpha_{j}(v_{k})=0$ for $1\le k\le r$ with $k\ne j$. 
On the other hand, if $f$ is an element in the weight lattice we denote by 
$\theta_{f}\colon T\to \SS^{1}$ the unique homomorphism of Lie groups whose derivative is $f$. 
Notice that the lattice $\Hom(\Lambda,\Z)$ can be identified with  $\Pi=\Hom(T,\SS^{1})$ under 
the following assignment
\begin{align*}
\Gamma\colon \Hom(\Lambda,\Z)&\to \Hom(T,\SS^{1})\\
\beta&\mapsto \Gamma_{\beta},
\end{align*}
where  $\Gamma_{\beta}\colon T\to \SS^{1}$ is the unique 
homomorphism that satisfies the equation 
\[
\Gamma_{\beta}(\exp(tK_{\alpha}))=e^{2\pi i t\beta(K_{\alpha})}
\]
for every real number $t\in \R$ and every root $\alpha\in \Phi$.  
From now on we will identify $\Hom(\Lambda,\Z)$ 
and $\Hom(T,\SS^{1})$ using this isomorphism. 

\medskip

When $G$ is compact, simple and simply connected acting on itself by
conjugation, we have $H^3_G(G;\Z) \cong \Z$. 
By \cite[Proposition 3.1]{Meinrenken} it follows that we can choose 
a generator $\eta\in H^{3}_{G}(G;\Z)$ in such a way that the restriction 
of $\eta$ to $T$ corresponds to restriction of the basic inner 
product on $\g$ to $\Lambda$. We fix this choice from now on. 
Let $p_n \colon P_n \to G$ be a $G$-equivariant
principal $PU(\H)$-principal bundle that corresponds to the class $n\eta$.
By Proposition \ref{pi1action}, we have an associated homomorphism $\phi_n \colon \Lambda \to \Pi$. 
The following proposition provides an explicit expression for $\phi_n$ in
terms of the geometry of $G$.

\begin{proposition}\label{Inducedhom}
Let $G$ be a compact, simple and simply connected Lie group and $n\in \Z$ an integer. 
The homomorphism $\phi_{n}\colon \Lambda\to \Pi$ corresponding to $p_{n}\colon P_{n}\to G$  
is given by
\[
\phi_{n}(K_{\alpha})=\theta_{\alpha}^{nd_{\alpha}}
\]
for every $\alpha\in \Phi$. Here $d_{\alpha}$ is the integer defined by
\[
d_{\alpha}=\frac{B(h_{\alpha_{0}},h_{\alpha_{0}})}{B(h_{\alpha},h_{\alpha})}. 
\]
\end{proposition}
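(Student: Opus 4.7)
My approach is to apply the preceding proposition, which identifies $\phi_P$ with the bilinear data $h_T(\eta_P)$, and to make this data completely explicit for $X=G$ with the conjugation action and $\eta_P = n\eta$. Since $h_T$ is additive, $\phi_n = n\phi_1$, so it suffices to verify the formula for $n=1$. For the conjugation action one has $G^T = Z_G(T) = T$, hence $\pi_1(G^T) = \Lambda$ and both sides of the claimed identity are homomorphisms $\Lambda \to \Pi$ evaluated on the distinguished generators $K_\alpha$. Since $T$ acts trivially on itself by conjugation, $ET \times_T T$ is homotopy equivalent to $T \times BT$, and the K\"unneth formula together with the vanishing of $H^{\mathrm{odd}}(BT;\Z)$ yields
\[
H^3(T\times BT;\Z) \;\cong\; H^3(T;\Z)\,\oplus\,\bigl(H^1(T;\Z)\otimes H^2(BT;\Z)\bigr).
\]

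The second summand is naturally isomorphic to $\Lambda^{*}\otimes\Lambda^{*}$, that is, to integer-valued bilinear forms on $\Lambda$. By the normalization of the generator $\eta$ recalled in the paragraph preceding the proposition, $\pi_T(r_T(\eta))$ corresponds under this isomorphism to the basic inner product $\langle\cdot,\cdot\rangle$ restricted to $\Lambda$. The preceding proposition then identifies $\phi_1(K_\alpha)$, via the canonical isomorphism $\Pi\cong\Hom(\Lambda,\Z)$, with the linear functional $\langle K_\alpha,\cdot\rangle$ on $\Lambda$.

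The remaining task is to compare two elements of $\Hom(\Lambda,\Z)$. Since $d\theta_\alpha=\alpha$, the isomorphism $\Gamma$ sends $\alpha/(2\pi i)\in\Hom(\Lambda,\Z)$ to $\theta_\alpha$, and hence $\theta_\alpha^{d_\alpha}$ corresponds to $d_\alpha\,\alpha/(2\pi i)$; evaluating on $K_\beta = 2\pi i\,H_\beta$ gives $d_\alpha\,\alpha(H_\beta) = \tfrac{2\, d_\alpha\, B(h_\alpha,h_\beta)}{B(h_\beta,h_\beta)}$. Thus the proposition reduces to verifying the purely Lie-algebraic identity $\langle K_\alpha, K_\beta\rangle = d_\alpha\,\alpha(H_\beta)$ for all $\alpha,\beta\in\Phi$, which follows from the defining normalization of the basic inner product (fixed by $\langle H_{\alpha_0},H_{\alpha_0}\rangle = 2$, compatibly with the choice of generator of $H^3_G(G;\Z)$) together with the definition of $d_\alpha$ in terms of Killing form lengths. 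The main obstacle is purely bookkeeping: the factors of $2\pi i$ arising from $\Gamma$, the proportionality constant between the Killing form and the basic inner product, and the length ratio defining $d_\alpha$ must all align to yield precisely the exponent $d_\alpha$; conceptually, the exponent $d_\alpha$ records the distinction between long and short roots that is invisible in the simply-laced case.
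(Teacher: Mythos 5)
Your proposal is correct and follows essentially the same route as the paper's proof: reduce to $n=1$, use the normalization of $\eta$ so that $h_T(\eta)$ is the basic inner product restricted to $\Lambda$, write $\langle\cdot,\cdot\rangle = cB(\cdot,\cdot)$ with $c$ fixed by the length of the shortest lattice vector, and check that $\langle K_\alpha,\cdot\rangle$ corresponds to $\theta_\alpha^{d_\alpha}$ under $\Gamma$. One small bookkeeping point: the normalization is $\langle K_{\alpha_0},K_{\alpha_0}\rangle = 2$ with $K_{\alpha_0}=2\pi i H_{\alpha_0}$, not $\langle H_{\alpha_0},H_{\alpha_0}\rangle=2$, which is exactly one of the $2\pi i$ factors you flag as needing to align (and with the correct normalization your target identity $\langle K_\alpha,K_\beta\rangle = d_\alpha\,\alpha(H_\beta)$ does check out).
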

\begin{proof}[\bf Proof: ]
Let us show first the result for $n=1$; that is, let us prove the result   
for a principal bundle $p\colon P\to G$ whose class corresponds to 
$\eta\in H_{G}^{3}(G;\Z)$. The class $\eta$ is chosen so that 
its restriction to $T$ corresponds to restriction of the basic inner 
product on $\g$ to $\Lambda$. The restriction of this basic inner product  
defines a bilinear form 
$\left<\cdot,\cdot\right>\colon \Lambda\times \Lambda\to \Z$ in such a way that  
the smallest non-zero element in $\Lambda$ has length $\sqrt{2}$. With our conventions 
the smallest element in $\Lambda$ is precisely $K_{\alpha_{0}}$, 
where as above $\alpha_{0}$ is the  highest root. 
Therefore $\left<K_{\alpha_{0}},K_{\alpha_{0}}\right>=2$. On the other hand, 
since $\left<\cdot,\cdot\right>$ is a non-degenerate, $W$-invariant form on $\t$
we have that $\left<\cdot,\cdot\right>=cB(\cdot, \cdot)$ for a certain constant $c$. 
This is because the Killing form is, up to multiplication by a constant, 
the only $W$-invariant non-degenerate form on $\t$. Since 
$\left<K_{\alpha_{0}},K_{\alpha_{0}}\right>=2$ we conclude that 
\[
c=\frac{B(h_{\alpha_{0}},h_{\alpha_{0}})}{2(2\pi i)^{2}}.
\]
Therefore for every $X\in \t$ and every root $\alpha$ we have 
\begin{align*}
\left<X,K_{\alpha}\right>&=\frac{B(h_{\alpha_{0}},h_{\alpha_{0}})}{2(2\pi i)^{2}}B(X,K_{\alpha})
=\frac{B(h_{\alpha_{0}},h_{\alpha_{0}})}{2(2\pi i)}B(X,H_{\alpha})\\
&=\frac{B(h_{\alpha_{0}},h_{\alpha_{0}})}{B(h_{\alpha},h_{\alpha})}
\left(\frac{{\alpha}(X)}{2\pi i}\right)=d_{\alpha}\left(\frac{\alpha(X)}{2\pi i}\right).
\end{align*}
This shows that the homomorphism $\Lambda\to \Z$ given by 
$X\mapsto \left<X,K_{\alpha}\right>$ 
agrees with $\alpha/2\pi i$. With the identification of 
$\Hom(\Lambda,\Z)$ and $\Hom(T,\SS^{1})$ given above this homomorphism 
corresponds precisely to $\theta_{\alpha}^{d_{\alpha}}$. 
This proves that $\phi_{1}(K_{\alpha})=\theta_{\alpha}^{d_{\alpha}}$ for 
every root $\alpha\in \Phi$.

In general, if $p_{n}\colon P_{n}\to G$ is a principal bundle whose class 
corresponds to $n\eta\in H_{G}^{3}(G;\Z)$, then the restriction to $T$ 
of $n\eta$ corresponds to the restriction of $n\left<\cdot,\cdot\right>$ to $\Lambda$,
hence the result holds because of the case $n=1$.
\end{proof}

\medskip

In the next section, it will be important to understand the structure of 
$H^{*}_{\Lambda}(\t,R(T)_{\Q})$ as a $W$-module, hence we give a description
here in terms of the geometry of the group. For this, let $R(T)^{sgn}$ 
denote the representation ring $R(T)$ with $W$-action given as follows: 
if $w\in W$ and $p\in R(T)^{sgn}$, then  
\[
w\bullet p=(-1)^{\ell(w)}w\cdot p.
\]
Here $\ell(w)$ denotes the length of $w$ relative to the reflections 
$s_{\alpha_{1}},\dots, s_{\alpha_{r}}$ and $w\cdot p$ denotes the natural action of 
$W$ on $R(T)$. Also, we denote $R(T)^{sgn}_{\Q}=R(T)^{sgn}\otimes \Q$.

On the other hand, let $R:=\Z[\Lambda]$ and $M:=\Z[\Pi]=R(T)$. We can see $M$ as a module over 
$R$ via the homomorphism $\phi_{n}\colon \Lambda\to \Pi$. This homomorphism is injective 
by \cite[Proposition 3.1]{Meinrenken} whenever $n\ne 0$. The lattices
$\Lambda$ and $\Pi$ have the same rank so that $M$ is a finitely 
generated $R$-module. The elements $K_{\alpha_{1}},\dots, K_{\alpha_{r}}$ 
form a free basis for the lattice $\Lambda$. If $x_{i}:=K_{\alpha_{i}}-1$ for $1\le i \le r$  
then  the Koszul complex $K_{*}=K(x_{1},\dots, x_{r})$ forms a free resolution of $\Z$ seen as a 
trivial module over $R$. Recall that in $K_{*}$ we have generators $a_{1},\dots, a_{r}$ of degree 
one and for each $0\le p\le r$  the module  $K_{p}$ is a free $R$-module generated by elements of 
the form $a_{i_{1},\dots, i_{p}}:=a_{i_{1}}\cdots a_{i_{p}}$, where $i_{1},\dots, i_{p}$ 
runs through all the sequences of integers satisfying 
$1\le i_{1}<\cdots <i_{p}\le r$. The differential $\partial\colon K_{p}\to K_{p-1}$ 
is given by the formula 
\[
\partial(a_{i_{1},\dots, i_{p}})=
\sum_{s=1}^{p}(-1)^{s-1}x_{i_{s}}a_{i_{1},\dots, \hat{i}_{s},\dots, i_{p}}
\]
Let $J_{n}$ denote the ideal in $R(T)^{sgn}$ generated by 
$\theta_{\alpha_{i}}^{nd_{i}}-1$ for $1\le i\le r$.

\begin{theorem}\label{ideal-J}
Suppose that $G$ is a compact, simple and simply connected Lie group of rank equal to $r$,
and $n\ne 0$ is an integer. 
If $p\ne r$, we have $H^{p}_{\Lambda}(\t,R(T))=0$ and for $p=r$ 
there is a $W$-equivariant isomorphism 
\[
H^{r}_{\Lambda}(\t,R(T))\cong R(T)^{sgn}/J_{n}.
\]
Similarly, with rational coefficients we have $H^{p}_{\Lambda}(\t,R(T)_{\Q})=0$ for $p\ne r$
and for $p=r$, there is  a $W$-equivariant isomorphism  
\[
H^{r}_{\Lambda}(\t,R(T)_{\Q})\cong R(T)^{sgn}_{\Q}/J_{n}.
\]
\end{theorem}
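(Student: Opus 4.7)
The plan is to identify $H^{*}_{\Lambda}(\t,R(T))$ with the group cohomology $\mathrm{Ext}^{*}_{R}(\Z,R(T))$, where $R=\Z[\Lambda]$ and $\Lambda$ acts on $R(T)$ via the homomorphism $\phi_{n}$ of \fullref{Inducedhom} (so that $K_{\alpha_{i}}\in\Lambda$ acts as multiplication by $\theta_{\alpha_{i}}^{nd_{i}}$). This identification is standard, since $\Lambda$ acts freely by translation on the contractible space $\t$. Applying $\Hom_{R}(-,R(T))$ to the Koszul resolution $K_{*}=K(x_{1},\dots,x_{r})$ described above converts the differentials into multiplication by $\phi_{n}(x_{i})=\theta_{\alpha_{i}}^{nd_{i}}-1$, reducing the problem to computing the Koszul cohomology of $R(T)$ with respect to the sequence $y_{i}:=\theta_{\alpha_{i}}^{nd_{i}}-1$.

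The central algebraic input is then that $y_{1},\dots,y_{r}$ is a regular sequence in $R(T)$. Since $n\neq 0$, the homomorphism $\phi_{n}$ is injective by \cite{Meinrenken}, and $\phi_{n}(\Lambda)$ has finite index in the weight lattice $\Pi$. Choosing coset representatives exhibits $R(T)=\Z[\Pi]$ as a free module over the subring $\Z[\phi_{n}(\Lambda)]\cong\Z[u_{1}^{\pm 1},\dots,u_{r}^{\pm 1}]$ with $u_{i}=\theta_{\alpha_{i}}^{nd_{i}}$. In this subring the sequence $u_{i}-1$ is visibly regular, since the successive quotients are Laurent polynomial rings in fewer variables, and regularity persists after the free extension to $R(T)$. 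Standard Koszul theory then yields $H^{p}_{\Lambda}(\t,R(T))=0$ for $p\neq r$ together with an isomorphism $H^{r}_{\Lambda}(\t,R(T))\cong R(T)/J_{n}$ of abelian groups.

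It remains to track the Weyl group action, which I expect to be the subtlest point. Since $\phi_{n}$ is $W$-equivariant, the ideal $J_{n}$ agrees with the ideal generated by all $\phi_{n}(\lambda)-1$ for $\lambda\in\Lambda$ and is therefore $W$-invariant. The top Koszul cohomology is canonically isomorphic to $\det(\Lambda^{*})\otimes_{\Z}(R(T)/J_{n})$; since each simple reflection acts on $\Lambda$ with determinant $-1$, the induced action of $W$ on $\det(\Lambda^{*})$ is the sign character $w\mapsto(-1)^{\ell(w)}$, producing precisely the twist and giving the $W$-equivariant isomorphism $H^{r}_{\Lambda}(\t,R(T))\cong R(T)^{sgn}/J_{n}$. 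The rational statement follows by tensoring the whole argument with $\Q$: the Koszul resolution remains a resolution and freeness of $R(T)$ over $\Z[\phi_{n}(\Lambda)]$ is preserved. The main obstacle is organizing the sign-twist bookkeeping cleanly, because the Koszul resolution depends on an ordered basis of $\Lambda$ that is not $W$-stable; this is best addressed by identifying the top cohomology with the canonically defined $\det(\Lambda^{*})\otimes R(T)/J_{n}$ rather than working basis-by-basis.
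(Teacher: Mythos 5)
Your proof is correct and follows essentially the same route as the paper: identify $H^{*}_{\Lambda}(\t,R(T))$ with the Koszul cohomology of $R(T)$ over $\Z[\Lambda]$ with respect to $x_{i}=K_{\alpha_{i}}-1$, use regularity of that sequence to kill every degree below $r$, and extract the sign twist in the top degree. The only differences are in bookkeeping: you actually justify the regularity (via freeness of $\Z[\Pi]$ over $\Z[\phi_{n}(\Lambda)]$ and regularity of $u_{i}-1$ in a Laurent polynomial ring), a point the paper merely asserts before invoking \cite[Corollary 17.5]{Eisenbud}, and you obtain the $W$-equivariance of the top isomorphism from the canonical line $\det(\Lambda^{*})$ rather than from the $W$-equivariant cellular structure on $\t$ that the paper uses to make the Koszul complex $W$-equivariant; both devices yield the same sign character $w\mapsto(-1)^{\ell(w)}$.
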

\begin{proof}[\bf Proof: ]
Recall that the elements $K_{\alpha_{1}},\dots ,K_{\alpha_{r}}$ form a basis for the integral 
lattice $\Lambda$. Therefore $\t$ has the structure of a $\Lambda$-CW complex in such a way 
that for each sequence of integers $1\le i_{1}<\cdots <i_{p}\le r$ we have a 
$p$-dimensional $\Lambda$-cell $e_{i_{1},...,i_{p}}$ corresponding to the sequence 
$\{K_{\alpha_{i_{1}}},\dots,K_{\alpha_{i_{p}}}\}\subset \Lambda$.
If we consider the cellular complex $C_{*}(\t)$, then 
the natural action of $W$ on $\t$ makes $C_{*}(\t)$ into a free resolution of $\Z$ as a 
trivial module over $R = \Z[\Lambda]$, where all the maps in sight are $W$-equivariant. 
Moreover, if we identify the element  $e_{i_{1},...,i_{p}}\in C_{p}(\t)$ with 
$a_{i_{1},..,i_{p}}$ we obtain an isomorphism of $C_{*}(\t)$ with the Koszul complex. 
Via this isomorphism we can give $K_{*}$ the structure of a $W$-equivariant chain complex. 
By definition 
\[
H^{*}_{\Lambda}(\t;R(T))=H^{*}(\Hom_{\Z[\Lambda]}(C_{*}(\t),R(T)))\cong H^{*}(\Hom_{R}(K_{*},M)).
\]
Let $K^{*}$ be the dual of the Koszul complex. Explicitly, 
$K^{p}=\bigwedge^{p}(R^{r})$
and the differential $\delta_{x}\colon \wedge^{p}(R^{r})\to \wedge^{p+1}(R^{r})$ is given by 
$\delta_{x}(y)=x\wedge y$, where $x=(x_{1},\dots, x_{r})\in R^{r}$. 
We have an isomorphism of cochain complexes 
$\Hom_{R}(K_{*},M)\cong K^{*}\otimes_{R} M$. Also, 
the elements $x_{1},\dots, x_{r}$ form a regular sequence on $M$ so by 
\cite[Corollary 17.5]{Eisenbud} it follows that for $p\ne r$ 
\[
H^{p}_{\Lambda}(\t;R(T))\cong  H^{*}(K^{*}\otimes_{R} M)=0.
\] 
Moreover, for $p=r$ we have an isomorphism of $R$-modules
\[
H^{r}_{\Lambda}(\t;R(T))\cong  H^{r}(K^{*}\otimes_{R} M)=M/(x_{1},\dots,x_{r})M.
\]
By Proposition \ref{Inducedhom} we have  
$\phi_{n}(K_{\alpha_{i}})=\theta_{\alpha_{i}}^{nd_{i}}$ and thus 
we have an isomorphism of  $R$-modules 
\[
H^{r}_{\Lambda}(\t;R(T))\cong M/(x_{1},\dots,x_{r})M\cong 
R(T)/(\theta_{\alpha_{1}}^{nd_{1}}-1,\dots,\theta_{\alpha_{r}}^{nd_{r}}-1).
\] 
However, the above isomorphism is not  $W$-equivariant. To obtain the correct 
$W$-action on $H^{r}_{\Lambda}(\t;R(T))$ we observe that we have a $W$-equivariant 
isomorphism 
\[
\Hom_{\Z[\Lambda]}(C_{r}(\t),R(T))\cong R(T)^{sgn}.
\] 
With this observation we can conclude that we have a $W$-equivariant isomorphism 
\[
H^{r}_{\Lambda}(\t;R(T))\cong  H^{r}(K^{*}\otimes_{R} M)=R(T)^{sgn}/J_{n}.
\]
The rational coefficients case is proved in a similar way.
\end{proof}

\section{The twisted equivariant K-theory of inertia spaces} 

Suppose that $X$ is a compact $G$-CW complex in such a way that 
the action of $G$ on $X$ has connected maximal rank isotropy subgroups. 
Consider the inertia space 
\[
\Lambda X:=\left\{(x,g) | g\cdot x=x \right\}.
\]
The group $G$ acts on $\Lambda X$ by the assignment $h\cdot (x,g)=(h\cdot x,hgh^{-1})$. 
Notice that the isotropy subgroup of this action at $(x,g)$ is the centralizer of $g$ in $G_{x}$. 
If we assume that  $G_{x}$ has torsion free fundamental group for every $x\in X$, 
then the action of $G$ on $\Lambda X$ also has connected maximal rank isotropy subgroups 
(see \cite[Section 2]{AG}). Assume also that $x_{0}\in X$ is a point 
fixed by the $G$-action, then $(x_{0},1_{G})\in \Lambda X$  is also fixed by the 
$G$-action. We work under these assumptions from now on.

Since $G$ is compact and simply connected we have 
$H_{G}^{3}(G;\Z)\cong \Z$. We fix a generator $\eta\in H_{G}^{3}(G;\Z)$  
as in the previous section and suppose that  $p_{n}\colon P_{n}\to G$ is  
a $G$-equivariant principal $PU(\H)$-bundle that corresponds 
to the class $n\eta\in H^{3}_{G}(G;\Z)$, where $n\in \Z-\{0\}$.  Since $G$ 
is assumed to be simply connected the restriction of the bundle $p_{n}\colon P_{n}\to G$  
to $1_{G}$ is trivial. Recall that the bundle 
$p_{n}\colon P_{n}\to G$ induces a  homomorphism 
$\phi_{n}=\phi_{P_{n}}\colon \Lambda \to \Pi=\Hom(T,\SS^{1})$ that is $W$-equivariant by 
Proposition \ref{pi1action}. Denote by $\pi_{2}\colon \Lambda X \to G$ the projection onto the 
second component and let $q_{n}\colon Q_{n}\to \Lambda X$ be the pullback 
of $P_{n}$ under $\pi_{2}$. We are interested in computing 
${}^{Q_{n}}K_{G}^{*}(\Lambda X)\otimes \Q$ as a module over $R(G)_{\Q}$. 

As a first step notice that $(\Lambda X)^{T}=X^{T}\times T$ and thus 
$\widetilde{(\Lambda X)^{T}}=\widetilde{X^{T}}\times \t$.  
Also, $\pi_{1}((\Lambda X)^{T})=\pi_{1}(X^{T})\times \Lambda$
and the homomorphism associated to the bundle $q_{n}\colon Q_{n}\to \Lambda X$ is the map 
\[
\phi_{Q_{n }}=\phi_{n}\circ \pi_{2}\colon \pi_{1}(X^{T})\times \Lambda\to \Hom(T,\SS^{1}).
\]
We can see $R(T)$ as a module over $\pi_{1}(X^{T})\times \Lambda$ via this homomorphism. 
To compute ${}^{Q_n}K_{G}^{*}(\Lambda X)\otimes \Q$ we are going 
to use the spectral sequence previously described. By Theorem \ref{ss rational coeff}, 
the $E_{2}$-term in this spectral sequence is given by 
\begin{equation}\label{E2-term1}
E_{2}^{p,q}= \left\{
\begin{array}{ccc}
H^{p}_{\pi_{1}(X^{T})\times \Lambda}(\widetilde{X^{T}}\times \t;R(T)_{\Q})^{W} 
&\text{ if } &q \text{ is even},\\
0& \text{ if } &q \ \ \text{ is odd.}
\end{array}
\right.
\end{equation}
Next we identify  $H^{p}_{\pi_{1}(X^{T})\times \Lambda}(\widetilde{X^{T}}\times \t;R(T)_{\Q})$. 
Note that as a  module over $\pi_{1}(X^{T})\times \Lambda$ there is an isomorphism 
$R(T)_{\Q} \cong \Q\otimes R(T)_{\Q}$, where on the right hand side, $\pi_{1}(X^{T})\times \Lambda$ 
acts trivially on $\Q$ and $\pi_{1}(X^{T})\times \Lambda$ acts on $R(T)_{\Q}$ by the 
homomorphism $\phi_{Q_{n}}$. 
Using a suitable version of the K\"unneth theorem and applying Theorem \ref{ideal-J}, we obtain a 
$W$-equivariant isomorphism 
\begin{align*}
&H^{n}_{\pi_{1}(X^{T})\times \Lambda}(\widetilde{X^{T}}\times \t;R(T)_{\Q})
\cong H^{n-r}_{\pi_{1}(X^{T})}(\widetilde{X^{T}};\Q)\otimes H^{r}_{\Lambda}(\t;R(T)_{\Q})\\
\cong & H^{n-r}(X^{T};\Q)\otimes H^{r}_{\Lambda}(\t;R(T)_{\Q})\cong 
H^{n-r}(X^{T};\Q)\otimes (R(T)^{sgn}_{\Q}/J_{n}).
\end{align*}
The above can be summarized in the following theorem.

\begin{theorem}\label{TheoremE2-term}
Let $G$ be a compact, simple and simply connected Lie group of rank equal to 
$r$ and $n\ne 0$  an integer. Suppose 
that $X$ is a compact $G$-CW complex such that $G_{x}$ is a connected subgroup of maximal 
rank that has torsion free fundamental group for every $x\in X$ and that there 
is a point fixed by the action of $G$. 
Then there is a spectral sequence with $E_{2}$-term given by 
\begin{equation*}
E_{2}^{p+r,q}= \left\{
\begin{array}{ccc}
\left[ H^{p}(X^{T};\Q)\otimes (R(T)^{sgn}_{\Q}/J_{n}) \right]^{W}
&\text{ if } &p\ge 0\text{ and } q \text{ is even},\\
0& \text{ if } &q \ \ \text{ is odd}
\end{array}
\right.
\end{equation*} 
which converges to ${}^{Q_{n}}K_{G}^{*}(\Lambda X)\otimes \Q$.
\end{theorem}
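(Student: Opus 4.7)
The plan is to apply Theorem \ref{ss rational coeff} directly to the $G$-space $\Lambda X$ equipped with the twisting bundle $q_n\colon Q_n\to \Lambda X$, and then identify the resulting Bredon cohomology via a K\"unneth argument combined with Theorem \ref{ideal-J}.

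First I would verify the hypotheses of Theorem \ref{ss rational coeff} in this setting. The $G$-action on $\Lambda X$ has connected maximal rank isotropy because each isotropy group of the form $C_{G_x}(g)$ is connected of maximal rank whenever $G_x$ is connected, of maximal rank, and has torsion-free fundamental group (this is the point where the torsion-free hypothesis on $\pi_1(G_x)$ enters). The point $(x_0,1_G)\in \Lambda X$ is $G$-fixed, and since $G$ is simply connected the bundle $Q_n$ is trivial over this basepoint because its restriction is classified by $H^3(BG;\Z)=0$. Finally, $(\Lambda X)^T$ is a $W$-CW complex of finite type admitting a $W$-equivariant good cover, as $X$ is a compact $G$-CW complex.

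Next I would identify the geometric data appearing in Theorem \ref{ss rational coeff}. Since an element $(x,g)$ is $T$-fixed iff $x\in X^T$ and $T\subseteq C_G(g)$, and the connected maximal-rank hypothesis on $G_x$ forces $g\in T$, one obtains $(\Lambda X)^T\cong X^T\times T$. The universal cover is then $\widetilde{(\Lambda X)^T}=\widetilde{X^T}\times \t$, with fundamental group $\pi_1((\Lambda X)^T)=\pi_1(X^T)\times \Lambda$ where $\Lambda=\ker(\exp_{|\t})$. Since $Q_n=\pi_2^* P_n$, the associated homomorphism $\phi_{Q_n}$ factors as $\phi_n\circ \pi_2$, so $\pi_1(X^T)$ acts trivially on $R(T)$ and $\Lambda$ acts via the formula of Proposition \ref{Inducedhom}. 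Feeding this into Theorem \ref{ss rational coeff} gives the expression (\ref{E2-term1}) for the $E_2$-term.

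The heart of the argument is then the computation of $H^*_{\pi_1(X^T)\times \Lambda}(\widetilde{X^T}\times \t;R(T)_\Q)$ as a $W$-module. Because $\pi_1(X^T)$ acts trivially on $\t$ and $R(T)_\Q$, while $\Lambda$ acts trivially on $\widetilde{X^T}$, a K\"unneth-type argument applied to the tensor product of chain complexes $C_*(\widetilde{X^T})\otimes_\Z C_*(\t)$ yields a $W$-equivariant isomorphism
\[
H^n_{\pi_1(X^T)\times \Lambda}(\widetilde{X^T}\times \t;R(T)_\Q)\cong
\bigoplus_{p+s=n} H^p_{\pi_1(X^T)}(\widetilde{X^T};\Q)\otimes H^s_\Lambda(\t;R(T)_\Q).
\]
By Theorem \ref{ideal-J}, only the $s=r$ summand survives and contributes $R(T)^{sgn}_\Q/J_n$; moreover $H^p_{\pi_1(X^T)}(\widetilde{X^T};\Q)\cong H^p(X^T;\Q)$ since $\pi_1(X^T)$ acts freely on $\widetilde{X^T}$ and we are in rational coefficients. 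Finally, since $W$ is a finite group and we are over $\Q$, taking $W$-invariants is exact and commutes with all the above identifications.

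The main obstacle is bookkeeping the $W$-action cleanly through the K\"unneth decomposition: one must verify that the sign representation on $H^r_\Lambda(\t;R(T)_\Q)$ arising in Theorem \ref{ideal-J} combines correctly with the natural $W$-action on $H^p(X^T;\Q)$ to give the stated $W$-module structure on the tensor product, and that the $W$-equivariant identification of $\widetilde{(\Lambda X)^T}$ with $\widetilde{X^T}\times \t$ induces the product $W$-action (diagonal in the obvious sense). Once this is checked, the theorem follows by feeding the identification into Theorem \ref{ss rational coeff} and reindexing so that the contribution from $H^r_\Lambda(\t;R(T)_\Q)$ lands in cohomological degree $p+r$.
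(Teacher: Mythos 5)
Your proposal is correct and follows essentially the same route as the paper: identify $(\Lambda X)^{T}\cong X^{T}\times T$ with $\pi_{1}((\Lambda X)^{T})=\pi_{1}(X^{T})\times\Lambda$ and $\phi_{Q_{n}}=\phi_{n}\circ\pi_{2}$, apply Theorem \ref{ss rational coeff}, and then use a K\"unneth decomposition together with Theorem \ref{ideal-J} (which kills all factors except $s=r$) before taking $W$-invariants, which is exact over $\Q$. The only cosmetic difference is that the paper deduces $g\in T$ in the fixed-point identification directly from $C_{G}(T)=T$ rather than from the maximal-rank hypothesis on $G_{x}$, but this does not affect the argument.
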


\begin{remark}
It seems reasonable to conjecture 
that with the given hypotheses the above spectral sequence should always
collapse at the $E_{2}$-term. We will provide some key examples where 
this collapse can be verified.
\end{remark}

\subsection{$G$ acting on itself by conjugation}

To begin we consider the case $X=\{*\}$ with the trivial $G$-action.  
If  $G$ is a compact, simple and simply connected Lie group then the hypotheses of 
Theorem \ref{TheoremE2-term} are satisfied trivially. Here the inertia 
space $\Lambda X$ is $G$-homeomorphic to $G$, where $G$ acts on itself by conjugation.
Let $n\ne 0$ and choose 
$p_{n}\colon P_{n}\to G$  a $G$-equivariant principal $PU(\H)$-bundle that corresponds 
to the cohomology class $n\eta\in H^{3}_{G}(G;\Z)$. In this case the bundle 
$Q_{n}=\pi_{2}^{*}(P_{n})$ agrees trivially with $P_{n}$. Also $H^{p}(X^{T};\Q)$ is trivial for 
$p\ne 0$ and $H^{0}(X^{T};\Q)\cong \Q$ with the trivial $W$-action. By Theorem \ref{TheoremE2-term}, 
the $E_{2}$-term in the spectral sequence computing 
${}^{P_{n}}K^{*}_{G}(G)\otimes \Q$ is such that 
\[
E_{2}^{r,q}= (R(T)^{sgn}_{\Q}/J_{n})^{W}
\]
for even values of $q$ and $E_{2}^{p,q}=0$ in all other cases. 
Consider the short exact sequence of $W$-modules
\[
0\to J_{n} \to  R(T)^{sgn}_{\Q} 
\to R(T)^{sgn}_{\Q}/J_{n}\to 0.
\]
Since we are working in characteristic zero and $W$ is a 
finite group, the exactness 
of this sequence is preserved at the level of $W$-invariants; that is, 
there is a short exact sequence
\[
0\to J^{W}_{n} \to  (R(T)^{sgn}_{\Q})^{W} 
\to (R(T)^{sgn}_{\Q}/J_{n})^{W}\to 0
\]
and so $(R(T)^{sgn}_{\Q}/J_{n})^{W}\cong (R(T)^{sgn}_{\Q})^{W}/J_{n}^{W}$. 
Consider the element  
$\delta:=\theta_{\rho}^{-1}\prod_{ \alpha\in \Phi^{+}}(\theta_{\alpha}-1)$,
where $\rho$ denotes the half sum of the positive roots. 
Notice that $\delta$ is the  denominator in the Weyl character formula. 
By \cite[Proposition 2, VI \S 3 no. 3]{Bourbaki} 
we have that $\delta\in (R(T)^{sgn}_{\Q})^{W}$
and the map 
\begin{align*}
\Psi\colon  (R(T)^{sgn}_{\Q})^{W}&\to R(T)_{\Q}^{W}\cong R(G)_{\Q} \\
p&\mapsto \frac{p}{\delta}
\end{align*}
is an isomorphism of $R(G)_{\Q}$-modules. 
Let $L_{n}=\Psi(J_{n}^{W})\subset R(G)_{\Q}$. 
The map $\Psi$ 
defines an isomorphism of $R(G)_{\Q}$-modules 
\begin{equation*}
(R(T)^{sgn}_{\Q})^{W}/J_{n}^{W}\cong R(G)_{\Q}/L_{n}.
\end{equation*}
Therefore the $E_{2}$-term of the spectral sequence 
is given by 
\begin{equation*}
E_{2}^{p,q}= \left\{
\begin{array}{ccc}
R(G)_{\Q}/L_{n}
&\text{ if } &p=r \text{ and } q \text{ is even},\\
0& \text{ if } &q \ \ \text{ in other cases}
\end{array}
\right.
\end{equation*} 
as a module over $R(G)_{\Q}$. In this situation trivially there are no extension problems and we 
conclude that there is an isomorphism of  $R(G)_{\Q}$-modules 
\begin{equation*}
{}^{P_{n}}K^{p}_{G}(G)\otimes \Q\cong \left\{
\begin{array}{ccl}
R(G)_{\Q}/L_{n} &\text{ if } & p\equiv r\ (\text{mod } 2),\\
0 &\text{ if } & p\equiv r+1\ (\text{mod } 2).
\end{array}
\right.
\end{equation*}
Suppose now that $k\ge 0$ is an integer such that $n=k+h^{\vee}$; that is, $k=n-h^{\vee}$.
With this further assumption, we show in Proposition \ref{Verlinde} that 
$L_{n}$ is precisely the rational Verlinde ideal $I_{k}$ at level $k=n-h^{\vee}$ and so 
$R(G)_{\Q}/L_{n}$ is the rational Verlinde algebra $V_{k}(G)_{\Q}=R(G)_{\Q} /I_{k}$ at level 
$k=n-h^{\vee}$. Therefore for $p\equiv r\ (\text{mod } 2)$ we have that 
${}^{P_{n}}K^{p}_{G}(G)\otimes \Q$ is isomorphic as a module over $R(G)_{\Q}$ to 
the rational Verlinde algebra $V_{k}(G)_{\Q}=R(G)_{\Q} /I_{k}$ at level $k=n-h^{\vee}$. 
This shows that our computations in this particular case agree with the 
celebrated theorem of Freed, Hopkins, Teleman (see \cite[Theorem 1]{FHTIII}).  
 
\medskip

\begin{example} Suppose that $G=SU(m)$. We provide here an explicit basis for the 
$\Q$-vector space $(R(T)^{sgn}_{\Q}/J_{n})^{W}\cong R(G)_{\Q}/L_{n}$ in this case. 
Let $T\cong (\SS^{1})^{m-1}$ be the maximal torus  consisting of all diagonal 
matrices in $SU(m)$. In particular, we have $r=m-1$. The Weyl group 
$W=\Sigma_{m}$ acts by permuting the diagonal entries of the elements in 
$T$. We  can choose $\Delta=\{\alpha_{1}=X_{1}-X_{2},\dots, \alpha_{m-1}=X_{m-1}-X_{m}\}$ 
as a set of simple roots. In this case we have
\[
R(T)^{sgn}_{\Q}=\Q[x_{1},\dots, x_{m}]/(x_{1}x_{2}\cdots x_{m}=1),
\]
where $\Sigma_{m}$ acts by signed permutations. More precisely, if $\sigma\in \Sigma_{m}$
then 
\[
\sigma\bullet (x_{i_{1}}^{a_{1}}\cdots x_{i_{k}}^{a_{k}})
=(-1)^{sgn(\sigma)}x_{\sigma^{-1}(i_{1})}^{a_{1}}\cdots x_{\sigma^{-1}(i_{k})}^{a_{k}}.
\]
Suppose that $n\ge m$ is a fixed integer. Then $J_{n}$ is the ideal in 
$R(T)^{sgn}_{\Q}$ generated 
by 
\[
\theta_{\alpha_{i}}^{n}-1=\left(\frac{x_{i}}{x_{i+1}}\right)^{n}-1, \text { for } 1\le i\le m-1.
\]
We conclude that
\[
R(T)^{sgn}_{\Q}/J_{n}=\Q[x_{1},\dots, x_{m}]/
(x_{1}x_{2}\cdots x_{m}=1, x_{1}^{n}=x_{2}^{n}=\cdots =x_{m}^{n}),
\]
with $\Sigma_{m}$ acting by signed permutations. 
Let $\A\colon \Q[x_{1},\dots, x_{m}]\to \Q[x_{1},\dots, x_{m}]$ 
denote the anti-symmetrization operator, that is, 
\[
\A(p(x_{1},\dots,x_{m}))=\sum_{\sigma\in \Sigma_{m}}(-1)^{sgn(\sigma)}
p(x_{\sigma(1)},\dots,x_{\sigma(m)}).
\]
In this example the set 
$\{\A(x_{1}^{i_{1}}\cdots x_{m-1}^{i_{m-1}})\}_{1\le i_{m-1}<i_{m-2}<\cdots<i_{1}\le n-1}$ 
forms a basis as a $\Q$-vector space for $(R(T)^{sgn}_{\Q}/J_{n})^{\Sigma_{m}}$.   
Thus as a $\Q$-vector space
\begin{equation*}
{}^{P_{n}}K^{q}_{SU(m)}(SU(m))\otimes \Q\cong \left\{
\begin{array}{lcl}
\Q^{ \binom {n-1} {m-1}} &\text{ if } & q\equiv m-1\ (\text{mod } 2),\\
0 &\text{ if } & q\equiv m\ (\text{mod } 2).
\end{array}
\right.
\end{equation*}
One can compare this computation with the general results obtained in \cite[Theorem 1.1]{Douglas1} 
and also with the well known description of the Verlinde algebra in terms of weights, 
see \cite[Section 5.3]{Meinrenken}
for instance.

\end{example}

\subsection{Inertia sphere}\label{inertia sphere}
Suppose now that $X=\SS^{\g}$ is the one point 
compactification of the 
Lie algebra $\g$. The group $G$ acts on $\SS^{\g}$ via the adjoint representation and fixing the 
point at infinity. This action has connected 
maximal rank isotropy subgroups. (See \cite[Section 6.2]{AG}). Moreover, when 
$G$ is simple and simply connected, the isotropy subgroup $G_v$ of every $v\in \g$ 
has torsion--free fundamental group. To see this, we may assume without loss 
of generality that $v$ belongs to the Weyl chamber $\mathfrak{C}(\Delta)$. Let $I\subset \Delta$ 
be the set of simple roots for which $\alpha(v)=0$. Then $G_{v}=G_{I}$ is the maximal 
rank subgroup of $G$ that contains $T$ and that has $I$ as a set of simple roots. Let 
$\Lambda_{I}$ be the lattice generated by the set $\{K_{\alpha}\}_{\alpha\in I}$. By  
\cite[Theorem 7.1]{Dieck}, we have that $\pi_{1}(G_{I})=\Lambda/\Lambda_{I}$ is a free 
abelian group of rank $|\Delta-I|$. We conclude that the action of $G$ on 
$\Lambda \SS^{\g}$ has connected maximal rank isotropy subgroups and the hypotheses of 
Theorem \ref{TheoremE2-term} are satisfied.  In this example 
$X^{T}=\SS^{\t}$. Notice that $H^{0}(\SS^{\t};\Q)\cong \Q$ with $W$ acting 
trivially,  $H^{r}(\SS^{\t};\Q)\cong \Q^{sgn}$, where $w\cdot x=(-1)^{\ell(w)}x$ for all 
$w\in W$ and all $x\in \Q$ and  $H^{p}(\SS^{\t};\Q)=0$ for all $p\ne 0,r$. It follows by 
Theorem \ref{TheoremE2-term}  that the $E_{2}$-term of the spectral sequence computing 
${}^{Q_{n}}K_{G}^{*}(\Lambda \SS^{\g})\otimes \Q$ is given as follows: for $q$ even 
\[
E_{2}^{r,q}= [H^{0}(X^{T};\Q)\otimes (R(T)^{sgn}_{\Q}/J_{n})]^{W}
\cong (R(T)^{sgn}_{\Q})^{W}/J_{n}^{W}\cong R(G)_{\Q}/L_{n}.
\]
Also, 
\[
E_{2}^{2r,q}=[H^{r}(X^{T};\Q)\otimes (R(T)^{sgn}_{\Q}/J_{n})]^{W}
\cong (\Q^{sgn}\otimes (R(T)^{sgn}_{\Q}/J_{n}))^{W}.
\]
Notice that $\Q^{sgn}\otimes R(T)^{sgn}_{\Q}\cong R(T)_{\Q}$ as a $W$-module, where on the 
right hand side $W$ acts by the natural action. Let $K_{n}$ denote the ideal in $R(T)_{\Q}$ 
generated by $\theta_{\alpha_{i}}^{nd_{i}}-1$ 
for $1\le i\le r$, where $W$ acts on $K_{n}$ by the natural action. Note that $K_{n}$ and $J_{n}$ 
are equal as sets, but $W$ acts on $J_{n}$ by signed permutations and on $K_{n}$ by permutations. 
It follows that we have an isomorphism of $R(G)_{\Q}$-modules 
\[
E_{2}^{2r,q}\cong (R(T)_{\Q}/K_{n})^{W}\cong R(G)_{\Q}/K_{n}^{W}.
\]
For other values of $p$ and $q$ we have $E_{2}^{p,q}=0$. 
In this case the only possibly non-trivial 
differential is $d_{r-1}$ for odd values of $r\ge 3$. However, 
comparing this spectral sequence with the one computing ${}^{P_{n}}K_{G}^{*}(G)$ 
via the inclusion map $i\colon *\to G$ and the projection map $\pi\colon G\to *$, we conclude  
that such differentials must be trivial. Hence
\begin{equation*}
E_{\infty}^{p,q}\cong \left\{
\begin{array}{cl}
R(G)_{\Q}/L_{n}
&\text{ if } q \text{ is even and } p=r,\\
R(G)_{\Q}/K_{n}^{W}
&\text{ if } q \text{ is even and } p=2r,\\
0& \text{ otherwise, } 
\end{array}
\right.
\end{equation*}
as a module over $R(G)_{\Q}$.

When $r$ is odd, then trivially there are no extension problems.
When $r$ is even, by comparing the spectral sequence computing 
${}^{Q_{n}}K_{G}^{p}(\Lambda \SS^{\g})\otimes \Q$ with that computing 
${}^{P_{n}}K_{G}^{p}(G)\otimes \Q$, we conclude that there are no 
extension problems in this case either. As a corollary, the following is obtained.

\begin{corollary}
Suppose that $G$ is a compact, simple and simply connected Lie group. 
Let  $n\ne 0$ be an integer and let $r$ be the rank of $G$. Then we have
the following isomorphisms $R(G)_{\Q}$-modules:
\begin{enumerate}
\item For odd values of $r$ 
\begin{equation*}
{}^{Q_{n}}K_{G}^{p}(\Lambda \SS^{\g})\otimes \Q\cong \left\{
\begin{array}{cl}
R(G)_{\Q}/K_{n}^{W}
&\text{ for  } p \text{ even,} \\
R(G)_{\Q}/L_{n}
&\text{ for  } p \text{ odd.}
\end{array}
\right.
\end{equation*}

\item For even values of $r$ 
\begin{equation*}
{}^{Q_{n}}K_{G}^{p}(\Lambda \SS^{\g})\otimes \Q\cong \left\{
\begin{array}{cl}
R(G)_{\Q}/K_{n}^{W}\oplus R(G)_{\Q}/L_{n}
&\text{ for  } p \text{ even,} \\
0
&\text{ for  } p \text{ odd.}
\end{array}
\right.
\end{equation*}
\end{enumerate}
\end{corollary}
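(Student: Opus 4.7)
The plan is to apply Theorem \ref{TheoremE2-term} directly to $X=\SS^{\g}$ and then read off the answer once the two nonzero $E_{\infty}$ columns and the extensions have been pinned down. The setup in the preceding paragraphs already establishes that $X^{T}=\SS^{\t}$, so the only nonzero $W$-equivariant rational cohomology groups are $H^{0}(\SS^{\t};\Q)\cong\Q$ (trivial $W$-action) and $H^{r}(\SS^{\t};\Q)\cong\Q^{sgn}$, and the first step is just to feed these into the $E_{2}$-formula from Theorem \ref{TheoremE2-term}. After applying the isomorphism $\Q^{sgn}\otimes R(T)^{sgn}_{\Q}\cong R(T)_{\Q}$ and the identification $(R(T)^{sgn}_{\Q}/J_{n})^{W}\cong R(G)_{\Q}/L_{n}$ from the $X=\{*\}$ discussion, one concludes that the only nonzero $E_{2}$ columns are $E_{2}^{r,q}\cong R(G)_{\Q}/L_{n}$ and $E_{2}^{2r,q}\cong R(G)_{\Q}/K_{n}^{W}$ for $q$ even.

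Next I would address the differentials. Because the two nonzero columns are in degrees $r$ and $2r$, the only possibly nontrivial differential is $d_{r-1}$, which occurs only for odd $r\ge 3$. To kill this differential I would use the standard naturality argument with respect to the base-point inclusion $i\colon \{*\}\hookrightarrow \SS^{\g}$ and the projection $\pi\colon \SS^{\g}\to \{*\}$, both of which are $G$-equivariant and compatible with $P_{n}$ and $Q_{n}$. These induce maps of spectral sequences into and out of the one computing ${}^{P_{n}}K_{G}^{*}(G)\otimes \Q$; the latter has its only surviving column at $p=r$, so by functoriality the map on $E_{r-1}$ identifies the $(r,q)$-column of our spectral sequence with the one from $X=\{*\}$, forcing $d_{r-1}$ on the $(r,q)$-column to vanish. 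This yields $E_{\infty}=E_{2}$ as stated in the excerpt.

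The last step is to resolve extensions. When $r$ is odd the two surviving groups $E_{\infty}^{r,q}$ and $E_{\infty}^{2r,q}$ contribute to total degrees of opposite parity, so there is no extension to resolve and one reads off the formula in case (1) directly. When $r$ is even both surviving terms land in even total degree, so we must exhibit a splitting of
\[
0\to R(G)_{\Q}/K_{n}^{W}\to {}^{Q_{n}}K_{G}^{0}(\Lambda\SS^{\g})\otimes\Q\to R(G)_{\Q}/L_{n}\to 0.
\]
The splitting is supplied by the same comparison square: the projection $\pi\colon\SS^{\g}\to\{*\}$ gives a map of $R(G)_{\Q}$-modules ${}^{P_{n}}K_{G}^{*}(G)\otimes\Q\to {}^{Q_{n}}K_{G}^{*}(\Lambda\SS^{\g})\otimes\Q$ which on associated graded is the inclusion of the $E_{\infty}^{r,*}$ summand, providing the required section.

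The main obstacle I anticipate is verifying carefully the comparison argument for even $r$: one must check that the map of spectral sequences induced by $\pi$ indeed realizes the summand inclusion on the $E_{\infty}$ level with the correct $R(G)_{\Q}$-module structure, rather than just abstractly producing a retraction of abelian groups. The rest of the argument is bookkeeping with the $E_{2}$-formula of Theorem \ref{TheoremE2-term} and the computations already carried out for $X=\{*\}$.
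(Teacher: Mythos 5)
Your proposal follows the paper's proof essentially verbatim: the same $E_2$-computation from Theorem \ref{TheoremE2-term} with $X^T=\SS^{\t}$, the same comparison with the spectral sequence for ${}^{P_n}K_G^*(G)\otimes\Q$ via $\{*\}\hookrightarrow\SS^{\g}\to\{*\}$ to kill the single possible differential, and the same comparison to resolve the extension when $r$ is even. The only difference is that you spell out the splitting of the extension via $\pi^*$ explicitly, where the paper merely asserts it; that detail is correct and welcome.
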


As before, if we further assume that $k\ge 0$ is such that $k=n-h^{\vee}$ 
then we can identify $R(G)_{\Q}/L_{n}$ with the Verlinde algebra 
$V_{k}(G)_{\Q}=R(G)_{\Q} /I_{k}$ at level $k=n-h^{\vee}$. 

\medskip

As a particular example let us suppose that $G=SU(m)$ and 
that $n\ge m$. We already know that in this case 
\[
R(T)^{sgn}_{\Q}/J_{n}=\Q[x_{1},\dots, x_{m}]/
(x_{1}x_{2}\cdots x_{m}=1, x_{1}^{n}=x_{2}^{n}=\cdots x_{m}^{n}),
\]
with $W=\Sigma_{m}$ acting by {\bf{signed permutations}}. Also, 
 $\{\A(x_{1}^{i_{1}}\cdots x_{m-1}^{i_{m-1}})\}_{1\le i_{1}<i_{2}<\cdots<i_{m-1}\le n-1}$ 
forms a basis for $(R(T)^{sgn}_{\Q}/J_{n})^{\Sigma_{m}}\cong R(G)_{\Q}/L_{n} $ 
as a $\Q$-vector space.  On the other hand, 
\[
R(T)_{\Q}/K_{n}=\Q[x_{1},\dots, x_{m}]/
(x_{1}x_{2}\cdots x_{m}=1, x_{1}^{n}=x_{2}^{n}=\cdots x_{m}^{n}),
\]
with $W=\Sigma_{m}$ acting by {\bf{permutations}}. If $\mathcal{S}$ denotes the 
symmetrization operator, that is, 
\[
\mathcal{S}(p(x_{1},\dots,x_{m}))=\sum_{\sigma\in \Sigma_{m}}p(x_{\sigma(1)},\dots,x_{\sigma(m)}),
\]
then it can be proved that the set 
 $\{\mathcal{S}(x_{1}^{i_{1}}\cdots x_{m-1}^{i_{m-1}})\}_{0\le i_{1}\le i_{2}\le 
\cdots\le i_{m-1}\le n}$ forms a basis as a $\Q$-vector space for 
$(R(T)_{\Q}/K_{n})^{\Sigma_{m}}\cong R(G)_{\Q}/K_{n}^{\Sigma_{m}}$.

\subsection{Inertia of the commuting variety in $\g$} 

For every integer $m\ge 2$ the
commuting variety in $\g$ is defined to be
\[
C_{m}(\g)=\{(X_{1},\dots,X_{m})\in \g^{m} ~|~ [X_{i},X_{j}]=0
\text{ for all }  1\le i,j \le m \}.
\]
Let $X=C_{m}(\g)^{+}$ denote the one point compactification of the 
commuting variety $C_{m}(\g)$. The group $G$ acts on $C_{m}(\g)^{+}$ via the adjoint 
representation and fixing the point at infinity. If we assume that 
$G$ is one of the groups $SU(l)$ or $Sp(l)$ with $l\ge 2$, then the 
action of $G$ on $C_{m}(\g)^{+}$ has connected 
maximal rank isotropy subgroups and for every $x\in C_{m}(\g)^{+}$ 
the isotropy subgroups $G_{x}$ have torsion free fundamental groups 
(See \cite[Section 6.4]{AG}). Therefore the action of $G$ on $\Lambda (C_{m}(\g)^{+})$ 
has connected maximal rank isotropy subgroups 
and the hypotheses of Theorem \ref{TheoremE2-term} are satisfied. 
In this example $X^{T}=\SS^{\t^{m}}$, the one-point compactification of $\t^{m}$, with $W$ acting 
diagonally. Notice that $H^{0}(\SS^{\t^{m}};\Q)\cong \Q$ with $W$ acting 
trivially,  $H^{mr}(\SS^{\t^{m}};\Q)\cong (\Q^{sgn})^{\otimes m}$, where $W$ acts diagonally 
and by the sign representation on each factor  $\Q^{sgn}$.  Also  
$H^{p}(\SS^{\t^{m}};\Q)=0$ for all $p\ne 0, mr$. By  
Theorem \ref{TheoremE2-term}, the $E_{2}$-term of the spectral sequence computing 
${}^{Q_{n}}K_{G}^{*}(\Lambda(C_{m}(\g))^{+})\otimes \Q$ is given as follows: for $q$ even 
\[
E_{2}^{r,q}=[H^{0}(X^{T};\Q)\otimes (R(T)^{sgn}_{\Q}/J_{n})]^{W}\cong 
(R(T)^{sgn}_{\Q})^{W}/J_{n}^{W}\cong R(G)_{\Q}/L_{n}.
\]
Also, 
\[
E_{2}^{(m+1)r,q}=[H^{mr}(X^{T};\Q)\otimes (R(T)^{sgn}_{\Q}/J_{n})]^{W}\cong 
[(\Q^{sgn})^{\otimes m}\otimes R(T)^{sgn}_{\Q}/J_{n}]^{W}.
\]
For all other cases $E_{2}^{p,q}=0$. To describe this $E_{2}$-term explicitly we need to 
consider a few cases. 

Suppose first that $m$ is even. In this case we have 
$(\Q^{sgn})^{\otimes m}\cong \Q$ with the trivial $W$-action. Therefore under this assumption  
\[
E_{2}^{(m+1)r,q}\cong (R(T)^{sgn}_{\Q})^{W}/J_{n}^{W}\cong R(G)_{\Q}/L_{n}.
\]
Since $m$ is even, for dimension reasons there are no nonzero differentials and so the 
spectral sequence collapses at the $E_{2}$-term. We conclude that in this case 
\begin{equation*}
E_{\infty}^{p,q}\cong \left\{
\begin{array}{cl}
R(G)_{\Q}/L_{n}
&\text{ if } q \text{ is even and } p=r, (m+1)r\\
0& \text{ otherwise. } 
\end{array}
\right.
\end{equation*}
as a module over $R(G)_{\Q}$.

Assume now that $m$ is odd. With this assumption, we have 
$(\Q^{sgn})^{\otimes m}\cong \Q^{sgn}$ with the sign $W$-action. Therefore under this assumption  
\[
E_{2}^{(m+1)r,q}\cong (\Q^{sgn}\otimes R(T)^{sgn}_{\Q}/J_{n})^{W}\cong R(G)_{\Q}/K_{n}^{W},
\]
where as above $K_{n}$ denotes the ideal in $R(T)_{\Q}$ 
generated by $\theta_{\alpha_{i}}^{nd_{i}}-1$ for $1\le i\le r$. 
In this case the only possibly non-trivial 
differential is $d_{mr-1}$ for odd values of $r$. However, 
comparing this spectral sequence with the one computing ${}^{P_{n}}K_{G}^{*}(G)\otimes \Q$ 
it follows that such differentials must be trivial.
We conclude that in this case 
\begin{equation*}
E_{\infty}^{p,q}\cong \left\{
\begin{array}{cl}
R(G)_{\Q}/L_{n} &\text{ if } q \text{ is even and } p=r,\\
R(G)_{\Q}/K_{n}^{W}
&\text{ if } q \text{ is even and } p=(m+1)r,\\
0& \text{ otherwise, } 
\end{array}
\right.
\end{equation*}
as a module over $R(G)_{\Q}$.

In any of the above cases, by comparing the spectral sequence computing 
${}^{P_{n}}K_{G}^{p}(G)\otimes \Q$  
with that computing ${}^{Q_{n}}K_{G}^{p}(\Lambda (C_{m}(\g)^{+}))\otimes \Q$,   we see 
that there are no extension 
problems thus yielding the following corollary.

\begin{corollary}
Suppose that  $G$ is one of the groups  $SU(l)$ or $Sp(l)$ with $l\ge 2$. 
Let  $n\ne 0$ be an integer and let $r$ be the rank of $G$. Then we have
the following isomorphisms of modules over $R(G)_{\Q}$
\begin{enumerate}
\item If $m$ is even 
\begin{equation*}
{}^{Q_{n}}K_{G}^{p}(\Lambda (C_{m}(\g)^{+}))\otimes \Q\cong \left\{
\begin{array}{cl}
(R(G)_{\Q}/L_{n})^{2} 
&\text{ for  }p\equiv r\ (\text{mod } 2),\\
0
&\text{ for  } p+1\equiv r\ (\text{mod } 2).
\end{array}
\right.
\end{equation*}

\item If $m$ and $r$ are odd
\begin{equation*}
{}^{Q_{n}}K_{G}^{p}(\Lambda (C_{m}(\g)^{+}))\otimes \Q\cong \left\{
\begin{array}{cl}
R(G)_{\Q}/K_{n}^{W}
&\text{ for  } p \text{ even,} \\
R(G)_{\Q}/L_{n}
&\text{ for  } p \text{ odd.}
\end{array}
\right.
\end{equation*}

\item If $m$ is odd and $r$ is even 
\begin{equation*}
{}^{Q_{n}}K_{G}^{p}(\Lambda (C_{m}(\g)^{+}))\otimes \Q\cong \left\{
\begin{array}{cl}
R(G)_{\Q}/K_{n}^{W}\oplus R(G)_{\Q}/L_{n}
&\text{ for  } p \text{ even,} \\
0
&\text{ for  } p \text{ odd.}
\end{array}
\right.
\end{equation*}
\end{enumerate}
\end{corollary}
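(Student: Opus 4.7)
The plan is to deduce the corollary directly from the $E_{\infty}$-computation established immediately before it. That computation shows that the surviving terms of the spectral sequence are concentrated in the two columns $p = r$ and $p = (m+1)r$, all in even rows $q$, with values $R(G)_{\Q}/L_{n}$ in column $r$ and $R(G)_{\Q}/L_{n}$ or $R(G)_{\Q}/K_{n}^{W}$ in column $(m+1)r$ according as $m$ is even or odd. Since $q$ is even, the parity of the total degree $p+q$ is controlled by $p$ alone, so identifying the $K$-groups reduces to a parity count plus the resolution of possible extensions between columns of the same parity.

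Running through the parity regimes is bookkeeping: when $m$ is even, both $r$ and $(m+1)r$ share the parity of $r$, so both columns contribute to the same parity of $K$-degree; when $m$ is odd and $r$ is odd, $r$ and $(m+1)r$ have opposite parities and the two columns separate, so each associated graded piece is already a $K$-group and case (2) follows immediately; when $m$ is odd and $r$ is even, both columns again land in even $K$-degree. Thus cases (1) and (3) demand a non-trivial check, namely that the length--$2$ filtration of $ {}^{Q_{n}}K_{G}^{p}(\Lambda(C_{m}(\g)^{+}))\otimes\Q$ splits as a direct sum.

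To rule out extensions in those cases, the plan is to exploit the fact that the point at infinity $\infty\in C_{m}(\g)^{+}$ is $G$-fixed. This yields a $G$-equivariant inclusion $\iota\colon G \cong \Lambda(\{\infty\})\hookrightarrow \Lambda(C_{m}(\g)^{+})$, $g\mapsto (\infty,g)$, and the projection $\pi_{2}\colon \Lambda(C_{m}(\g)^{+})\to G$ satisfies $\pi_{2}\circ\iota=\mathrm{id}_{G}$. Since $Q_{n}=\pi_{2}^{*}P_{n}$ by construction, $\iota^{*}Q_{n}=P_{n}$, so $(\iota,\pi_{2})$ is compatible with the twistings and induces a map of Segal--type spectral sequences from Theorem \ref{ss rational coeff}. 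On the abutment, $\pi_{2}^{*}$ is an $R(G)_{\Q}$-linear section of $\iota^{*}$, and on the $E_{\infty}$-page it identifies the spectral sequence for ${}^{P_{n}}K_{G}^{*}(G)\otimes\Q$ (concentrated in column $p=r$, with value $R(G)_{\Q}/L_{n}$) with the $p=r$ column of the spectral sequence for $\Lambda(C_{m}(\g)^{+})$. This realises the filtration-$r$ piece of the abutment as a canonical direct summand, forcing the column $p=(m+1)r$ to split off as well and yielding the stated direct sums.

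The main obstacle in this approach is verifying that the section $\pi_{2}^{*}$ on abutments is genuinely filtration-preserving and picks out the correct associated graded piece, which in turn rests on the naturality of the twisted Segal spectral sequence with respect to the equivariant maps $\iota$ and $\pi_{2}$ together with the identity $\iota^{*}Q_{n}=P_{n}$. Once this is in place, the three cases of the corollary follow mechanically by assembling the two columns of $E_{\infty}$ according to the parity table above.
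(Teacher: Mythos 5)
Your proposal is correct and follows essentially the same route as the paper: the paper also reads off the two surviving columns $p=r$ and $p=(m+1)r$ of $E_{\infty}$ and then disposes of the extension problems ``by comparing the spectral sequence computing ${}^{P_{n}}K_{G}^{*}(G)\otimes\Q$ with that computing ${}^{Q_{n}}K_{G}^{*}(\Lambda(C_{m}(\g)^{+}))\otimes\Q$,'' which is exactly the retraction $\pi_{2}\circ\iota=\mathrm{id}_{G}$ with $\iota^{*}Q_{n}=P_{n}$ that you make explicit. Your version merely spells out the mechanism (the section $\pi_{2}^{*}$ splitting off the filtration-$r$ quotient) that the paper leaves implicit.
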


\subsection{Commuting pairs in $SU(2)$} 
Suppose that $X=G$ acts on itself by conjugation. 
The inertia space for this action corresponds to $\Lambda X=\Hom(\Z^{2},G)$, the space  of 
commuting pairs in $G$, equipped with the conjugation action of $G$. 
Assume that $G=SU(2)$. For this group, the space $\Hom(\Z^{2},G)$ is path-connected and  
the conjugation action has connected maximal rank isotropy by \cite[Example 2.4]{AG}.
All the hypotheses of Theorem \ref{TheoremE2-term} are met for this example 
so that we can use this theorem to compute 
${}^{Q_{n}}K^{*}_{SU(2)}(\Hom(\Z^{2},SU(2)))\otimes \Q$.
Let $T\cong \SS^{1}$ be the maximal torus  
consisting of all diagonal matrices in $SU(2)$. Then 
$X^{T}=T\cong \SS^{1}$ with $W=\Z/2=\left<\tau | \tau^{2}=1\right>$  acting by permutation 
on the diagonal entries of $T$. For this example $H^{0}(X^{T};\Q)\cong \Q$ with trivial 
$W$-action, $H^{1}(X^{T};\Q)\cong \Q^{sgn}$, where $\tau \cdot x=-x$ for all
$x\in \Q$ and $H^{p}(X^{T};\Q)=0$ for $p\ne 0, 1$. Using Theorem \ref{TheoremE2-term},  
the $E_{2}$-term of the spectral sequence computing 
${}^{Q_{n}}K^{*}_{SU(2)}(\Hom(\Z^{2},SU(2)))\otimes \Q$. is given as follows: for $q$ even 
\[
E_{2}^{1,q}=\left[ H^{0}(X^{T};\Q)\otimes (R(T)^{sgn}_{\Q}/J_{n}) \right]^{W}
\cong (R(T)^{sgn}_{\Q})^{W}/J_{n}^{W}\cong R(G)_{\Q}/L_{n},
\]
Also, 
\[
E_{2}^{2,q}=\left[ H^{1}(X^{T};\Q)\otimes (R(T)^{sgn}_{\Q}/J_{n}) \right]^{W}
\cong \left[ \Q^{sgn}\otimes (R(T)^{sgn}_{\Q}/J_{n}) \right]^{W}\cong 
R(G)_{\Q}/K_{n}^{W}.
\]
Moreover, $E_{2}^{p,q}=0$ for all other cases. 
We conclude that 
\begin{equation*}
E_{2}^{p,q}\cong \left\{
\begin{array}{llc}
R(SU(2))_{\Q}/L_{n}&\text{ if } &p=1 \text{ and } q \text{ is even},\\
R(SU(2))_{\Q}/K_{n}^{W}&\text{ if } &p=2 \text{ and } q \text{ is even},\\
0& &\text{ otherwise.}
\end{array}
\right.
\end{equation*}
The spectral sequence collapses trivially at the $E_{2}$-term without extension 
problems so that we obtain the following corollary.
\begin{corollary}
For $G=SU(2)$ and an integer $n\ne 0$ we have 
\begin{equation*}
{}^{Q_{n}}K^{p}_{SU(2)}(\Hom(\Z^{2},SU(2)))\otimes \Q\cong \left\{
\begin{array}{lcc}
R(SU(2))_{\Q}/K_{n}^{W}
&\text{ if } &p \text{ is even},\\
R(SU(2))_{\Q}/L_{n} & \text{ if } &p \text{ is odd}.
\end{array}
\right.
\end{equation*}
\end{corollary}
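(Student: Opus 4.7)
The proof amounts to reading off the answer from the $E_2$-page just computed, so my plan is simply to verify that no higher differential can act nontrivially and that the filtration collapses without extension issues. Since the spectral sequence of Theorem \ref{ss rational coeff} is one of $R(G)_{\Q}$-modules, the conclusion will automatically be an isomorphism of $R(SU(2))_{\Q}$-modules.

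First I would note that by the computation immediately preceding the corollary, the only potentially nonzero entries of $E_2^{p,q}$ are concentrated in the two columns $p=1$ and $p=2$, and only in even rows $q$. The differential $d_r$ on the $E_r$-page has bidegree $(r,1-r)$ with $r\ge 2$, so any $d_r$ starting at $(1,q)$ or $(2,q)$ lands in column $p+r\ge 3$, where $E_r^{p+r,q-r+1}=0$. Therefore $d_r=0$ for every $r\ge 2$, and $E_2=E_\infty$.

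Next I would check the filtration. For a given total degree $N=p+q$, the condition that $q$ be even and $p\in\{1,2\}$ forces exactly one $(p,q)$ to contribute: namely $(1,N-1)$ if $N$ is odd, and $(2,N-2)$ if $N$ is even. In either case there is a unique nonzero associated graded piece, so the filtration on ${}^{Q_n}K^N_{SU(2)}(\Hom(\Z^2,SU(2)))\otimes\Q$ has a single nontrivial subquotient and hence is canonically isomorphic to it. Plugging in the values of $E_\infty^{1,\mathrm{even}}\cong R(SU(2))_{\Q}/L_n$ and $E_\infty^{2,\mathrm{even}}\cong R(SU(2))_{\Q}/K_n^W$ yields the stated formula; Bott periodicity handles the remaining values of $p$.

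There is essentially no obstacle here: all the substantive work has already been absorbed into the identification of the $E_2$-page via Theorem \ref{TheoremE2-term} and the explicit description of $H^*(\SS^1;\Q)$ as a $W=\Z/2$-module. The only minor point worth spelling out is that the vanishing for dimensional reasons is genuine (the cohomology of $\SS^1$ is concentrated in degrees $0$ and $1$ and $r=1$), so no comparison argument with the ${}^{P_n}K_G^*(G)$ spectral sequence (as used for the inertia sphere and the commuting variety) is needed in this case.
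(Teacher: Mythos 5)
Your proposal is correct and follows the same route as the paper: the paper likewise reads the answer off the $E_2$-page computed just before the corollary, noting that the spectral sequence "collapses trivially at the $E_2$-term without extension problems." You have merely made explicit the two observations the paper leaves implicit (all $d_r$ with $r\ge 2$ vanish because the nonzero entries sit only in columns $p=1,2$, and for each total degree the parity of $q$ leaves a single nonzero graded piece), which is a faithful expansion of the paper's argument.
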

In this case we can find explicit bases for $R(SU(2))_{\Q}/L_{n}$ 
and $R(SU(2))_{\Q}/K_{n}^{W}$ as $\Q$-vector spaces. Suppose for simplicity that 
$n> 0$. Recall that 
\[
R(T)_{\Q}=\Q[x_{1},x_{2}]/(x_{1}x_{2}=1)
\] 
with $\tau$ permuting 
$x_{1}$ and $x_{2}$. Also, $R(SU(2))_{\Q}=R(T)_{\Q}^{W}=\Q[\sigma]$ 
with  $\sigma=x_{1}+\frac{1}{x_{1}}$. In this example 
$J_{n}^{W}$ is the $R(G)_{\Q}$-submodule in $R(T)_{\Q}^{sgn}$ 
generated by $x_{1}^{n}-x_{1}^{-n}$. Note that 
$\delta=x_{1}-x_{1}^{-1}$ so that $L_{n}$
is the ideal in $R(G)_{\Q}$ generated by the element 
\[
\sigma_{n-1}:=\frac{x_{1}^{n}-x_{1}^{-n}}{x_{1}-x_{1}^{-1}}
=x_{1}^{n-1}+x_{1}^{n-3}+\cdots+x_{1}^{3-n}+x_{1}^{1-n}.
\]
Moreover, the classes corresponding to the elements 
\[
1, x_{1}+\frac{1}{x_{1}},\dots, x_{1}^{n-2}+\frac{1}{x_{1}^{n-2}}
\]
form a free basis for $R(SU(2))_{\Q}/L_{n}$. Thus  
$R(SU(2))_{\Q}/L_{n}\cong \Q^{n-1}$ as a $\Q$-vector space. On the other hand, $K_{n}$ is the 
ideal in $R(T)_{\Q}$ generated by $x_{1}^{2n}-1$. The classes corresponding to the 
elements 
\[
1, x_{1}+\frac{1}{x_{1}},...,x_{1}^{n-1}+\frac{1}{x_{1}^{n-1}}, x_{1}^{n}
\]
form a basis for $R(SU(2))_{\Q}/K_{n}^{W}$  as a 
$\Q$-vector space. Thus  
$R(SU(2))_{\Q}/K_{n}^{W}\cong \Q^{n+1}$ as a $\Q$-vector space.  
We conclude that  
\begin{equation*}
{}^{Q_{n}}K^{p}_{SU(2)}(\Hom(\Z^{2},SU(2)))\otimes \Q\cong \left\{
\begin{array}{lcc}
\Q^{n+1}
&\text{ if } &p \text{ is even},\\\\
\Q^{n-1}&\text{ if } & p \text{ is odd},
\end{array}
\right.
\end{equation*}
as a $\Q$-vector space

\section{Appendix:The Verlinde Algebra}

In this section we recall the definition of the 
Verlinde algebra and relate it to the computations obtained 
in the previous section. Suppose that $G$ is a compact, simple and 
simply connected Lie group of rank $r$ 
with Lie algebra $\g$. Fix a maximal torus $T$ in $G$ and let $W=N_{G}(T)/T$ be 
the corresponding Weyl group. Throughout this section we will use the same notation 
as before. 

Let $LG$ denote the loop group of $G$; that is, the infinite dimensional 
group of smooth maps $\SS^{1}\to G$. Let $k\ge 0$ be 
an integer that corresponds to a level and define $V_{k}(G)$  
to be the group completion of the monoid of positive energy representations of $LG$ at level $k$. 
The group $V_{k}(G)$ equipped with the fusion product becomes a ring that is known as the 
Verlinde ring. In this section we are interested in the rational version of this ring, namely 
$V_{k}(G)_{\Q}:=V_{k}(G)\otimes \Q$ that we will refer to as the rational Verlinde algebra. 
The algebra $V_{k}(G)_{\Q}$ can alternatively 
be defined as the quotient 
\[
V_{k}(G)_{\Q}=R(G)_{\Q}/I_{k}(G),
\]
where $I_{k}(G)$ denotes the rational Verlinde ideal at level $k$.  
(See \cite[Section 4]{FHT} \cite[Section 4]{Meinrenken}). 
The rational Verlinde ideal at level $k\ge 0$ can be defined algebraically as follows.
Let us identify $R(G)$ with $R(T)^{W}=\Z[\Pi]^{W}$, where as before $\Pi=\Hom(T,\SS^{1})$.  
For each $1\le j\le r$, let $\omega_{j}^{*}:=n_{j}^{\vee}v_{j}\in \t$. 
The element $\omega_{j}^{*}$ 
represents the fundamental weight $\omega_{j}$ in the sense that 
$\omega_{j}(X)=2\pi i\left<X,\omega_{j}^{*}\right>$  
for every  $X\in \t$  and every $1\le j\le r$. Recall that
$\left<\cdot,\cdot\right>$ denotes the basic inner product in $\t$ such that  
the smallest non-zero element in $\Lambda$ has length $\sqrt{2}$.
Also, let us denote
\[
\rho^{*}:=n_{1}^{\vee}v_{1}+\cdots+n_{r}^{\vee}v_{r}
=\omega_{1}^{*}+\cdots+\omega_{r}^{*}\in \t.
\] 
Notice that 

\begin{equation*}
\alpha_{j}(\omega_{k}^{*})= \left\{
\begin{array}{ll}
(2\pi i) n_{j}^{\vee}/n_{j}=2\pi i/d_{j} &\text{ if }  k = j,\\
0 & \text{ otherwise. }
\end{array}
\right.
\end{equation*}  
Let $M_{k}$ denote the set of sequences 
of non-negative integers $(m_{1},\dots,m_{r})$ that satisfy  
$m_{1}n_{1}^{\vee}+\cdots +m_{r}n_{r}^{\vee}\le k$. With this definition 
$\{(m_{1}+1)\omega_{1}^{*}+\cdots +(m_{r}+1)\omega_{r}^{*}\}_{(m_{1},\dots, m_{r})\in M_{k}}$ 
corresponds to the set of elements in the lattice generated by 
$\omega_{1}^{*},\dots, \omega_{r}^{*}$ that 
belong to the interior of $n\mathfrak{A}(\Delta)$. 
Here as before $n=k+h^{\vee}$ and $\mathfrak{A}(\Delta)$ denotes the Weyl alcove corresponding to 
the set of simple roots $\Delta$ and that contains $0$. Equivalently, the set $M_{k}$ 
is defined in such a way that the collection 
$\{(m_{1}+1)\omega_{1}+\cdots+(m_{k}+1)\omega_{r}\}_{(m_{1},\dots, m_{r})\in M_{k}}$ 
corresponds to the set of elements in the weight lattice that 
belong to the interior of $n\mathfrak{A}^{*}(\Delta)$. Here  
$\mathfrak{A}^{*}(\Delta)$ denotes the Weyl alcove in $\Hom_{\R}(\t,i\R)$ 
that contains all the elements $f\in \Hom_{\R}(\t,i\R)$ such that 
\[
0\le \frac{f(K_{\alpha_{0}})}{2\pi i} \le 1.
\]
With our notation the rational Verlinde ideal at 
level $k$ is defined as the vanishing ideal in 
$R(T)_{\Q}^{W}$ of the set 
\[
S_{k}:
=\left\{\exp(t) \in T ~\left|~ t=\frac{1}{n}\left(m_{1}\omega_{1}^{*}+\cdots +m_{r}
\omega_{r}^{*}+\rho^{*}\right), 
\text{ where } (m_{1},...,m_{r})\in M_{k}\right\}\right. .
\]
The goal of this section is to relate the Verlinde algebra with rational coefficients  
$V_{k}(G)_{\Q}$ with the quotient 
$(R(T)^{sgn}_{\Q})^{W}/J_{n}^{W}$ that appeared in the computations provided in the previous 
section. Recall that $J_{n}$ is the ideal in $R(T)^{sgn}_{\Q}$   
generated by $\theta_{\alpha_{i}}^{nd_{i}}-1$ for $1\le i\le r$. Also recall that
$R(T)^{sgn}$ denotes the representation ring $R(T)$ with the 
signed permutation action of $W$. That is, if $w\in W$ and $p\in R(T)^{sgn}$, then  
$w\bullet p=(-1)^{\ell(w)}w\cdot p$. Consider the anti-symmetrization operator 
$\A\colon R(T)^{sgn}_{\Q}\to (R(T)^{sgn}_{\Q})^{W}$ defined by 
\[
\A(p)=\sum_{w\in W}w\bullet p=\sum_{w\in W}(-1)^{\ell(w)}w\cdot p.
\]
Clearly $\A(p)\in (R(T)^{sgn}_{\Q})^{W}$ for every $p$ and so $\A$ is well defined.
On the other hand, consider 
$\delta:=\theta_{\rho}^{-1}\prod_{\alpha>0, \alpha\in \Phi}(\theta_{\alpha}-1),$ 
where $\rho$ denotes the half sum of the positive roots. Notice that 
$\delta$ is the denominator in the Weyl character formula. 
By \cite[Proposition 2, VI \S 3 no. 3]{Bourbaki} 
we have that $\delta\in (R(T)^{sgn}_{\Q})^{W}$
and the map 
\begin{align*}
\Psi \colon (R(T)^{sgn}_{\Q})^{W}&\to (R(T)_{\Q})^{W}\cong R(G)_{\Q} \\
p&\mapsto p/\delta
\end{align*}
is an isomorphism of $R(G)_{\Q}$-modules. Let 
$L_{n}=\Psi(J_{n}^{W})\subset R(G)_{\Q}$. The map $\Psi$ 
defines an isomorphism of $R(G)_{\Q}$-modules 
\begin{equation*}
(R(T)^{sgn}_{\Q})^{W}/J_{n}^{W}\cong R(G)_{\Q}/L_{n}.
\end{equation*}

\begin{proposition}\label{Verlinde}
Suppose that $G$ is a compact, simple and simply connected Lie group. 
If $k\ge 0$ and $n=k+h^{\vee}$, then the ideal $L_{n}$ equals the rational Verlinde ideal 
$I_{k}(G)$. In particular there is an isomorphism $R(G)_{\Q}$-modules
\[
V_{k}(G)_{\Q}=R(G)_{\Q}/L_{n}\cong (R(T)^{sgn}_{\Q})^{W}/J_{n}^{W}.
\]
\end{proposition}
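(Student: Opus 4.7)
The plan is to establish the equality $L_n = I_k(G)$ by first proving the inclusion $L_n \subseteq I_k(G)$ via a direct vanishing argument, and then matching $\Q$-dimensions of the two quotient algebras.

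For the inclusion $L_n \subseteq I_k(G)$, take any $p \in J_n^W \subseteq (R(T)^{sgn}_{\Q})^W$ and any $t = \exp(X_{\mathbf{m}}) \in S_k$, where
\[
X_{\mathbf{m}} := \tfrac{1}{n}\bigl(m_1\omega_1^* + \cdots + m_r\omega_r^* + \rho^*\bigr), \qquad (m_1,\dots,m_r) \in M_k.
\]
The identity $\alpha_i(\omega_j^*) = (2\pi i/d_j)\delta_{ij}$ from Section \ref{Section 4} gives $nd_i\,\alpha_i(X_{\mathbf{m}}) = 2\pi i(m_i+1) \in 2\pi i\,\Z$, so $\theta_{\alpha_i}^{nd_i}(t) = 1$ for every simple root, forcing $p(t) = 0$ since $p$ lies in the ideal generated by the elements $\theta_{\alpha_i}^{nd_i} - 1$. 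Using $\alpha_0 = \sum n_i\alpha_i$ and $n_i = d_i n_i^\vee$,
\[
\frac{\alpha_0(X_{\mathbf{m}})}{2\pi i} = \frac{1}{n}\sum_i n_i^\vee(m_i+1) = \frac{(h^\vee - 1) + \sum_i n_i^\vee m_i}{n},
\]
and with $n = k + h^\vee$ the defining condition $(m_1,\dots,m_r) \in M_k$ is exactly what places this quantity strictly between $0$ and $1$. Thus $X_{\mathbf{m}}$ lies in the interior of $\mathfrak{A}(\Delta)$; since $0 < \alpha(X_{\mathbf{m}})/(2\pi i) \le \alpha_0(X_{\mathbf{m}})/(2\pi i) < 1$ for every positive root $\alpha$ on the dominant chamber, we conclude $\delta(t) \ne 0$. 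Hence $\Psi(p)(t) = p(t)/\delta(t) = 0$ and $\Psi(p) \in I_k(G)$.

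For the opposite inclusion I would match dimensions. On the Verlinde side, $S_k$ is a finite set of regular elements lying in distinct $W$-orbits, so evaluation yields a surjection $R(G)_{\Q} \twoheadrightarrow \Q^{|M_k|}$ with kernel $I_k(G)$, giving $\dim_{\Q} R(G)_{\Q}/I_k(G) = |M_k|$. On the other side, the order of $W$ being invertible in $\Q$ makes the $W$-invariants functor exact, and by the already-proven isomorphism $\Psi$,
\[
R(G)_{\Q}/L_n \cong (R(T)^{sgn}_{\Q})^W/J_n^W \cong (R(T)^{sgn}_{\Q}/J_n)^W.
\]
Since $\{K_{\alpha_1},\dots,K_{\alpha_r}\}$ is a $\Z$-basis of $\Lambda$, we have $R(T)_{\Q}/J_n \cong \Q[\Pi/\phi_n(\Lambda)]$, a finite-dimensional $W$-module whose anti-invariants are spanned by antisymmetrizations $\A(\theta_\nu)$ of regular $W$-orbit representatives $\nu$ in $\Pi/\phi_n(\Lambda)$. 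The claim is that these regular orbits are in bijection with the interior weights of $n\mathfrak{A}^*(\Delta)$, i.e.\ with $(m_1+1)\omega_1 + \cdots + (m_r+1)\omega_r$ for $(m_i) \in M_k$, exactly by the alcove computation already done in the first step. Under $\Psi$, these antisymmetrizations map by the Weyl character formula to $\chi_{m_1\omega_1 + \cdots + m_r\omega_r}$, the standard basis of $V_k(G)_{\Q}$, yielding $\dim_{\Q} R(G)_{\Q}/L_n = |M_k|$.

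The main obstacle will be justifying that $\phi_n(\Lambda)$ is precisely the translation lattice of the affine Weyl group at level $n$, so that its fundamental domain for the action on $\Pi \otimes \R$ is exactly $n\mathfrak{A}^*(\Delta)$; the $d_i$ built into $\phi_n(K_{\alpha_i}) = nd_i\,\alpha_i/(2\pi i)$ are the non-simply-laced corrections that make this identification work. Once this is in hand, interior lattice points of $n\mathfrak{A}^*(\Delta)$ are standard representatives of regular $W \ltimes \phi_n(\Lambda)$-orbits on $\Pi$, completing the dimension count. Combining with the inclusion $L_n \subseteq I_k(G)$ yields $L_n = I_k(G)$, and the claimed isomorphism of $R(G)_{\Q}$-modules $V_k(G)_{\Q} \cong (R(T)^{sgn}_{\Q})^W/J_n^W$ follows immediately from $\Psi$.
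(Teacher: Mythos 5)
Your overall skeleton coincides with the paper's: first show $L_n\subseteq I_k(G)$ by evaluating the generators $\theta_{\alpha_i}^{nd_i}-1$ at the points of $S_k$ (your computation $nd_i\,\alpha_i(X_{\mathbf m})=2\pi i(m_i+1)$ and the check that $X_{\mathbf m}$ is interior to $\mathfrak{A}(\Delta)$, so $\delta\ne 0$ there, is exactly the paper's Step 1), and then close the gap by a dimension comparison against the known rank $|M_k|$ of $V_k(G)$. Where you genuinely diverge is in how you bound $\dim_{\Q}(R(T)^{sgn}_{\Q}/J_n)^W$. The paper works entirely inside $R(T)^{sgn}_{\Q}/J_n$: it shows $\theta_{\alpha_0}^n\equiv 1$, uses the affine reflections $s_{\alpha_0,mn}$ to push any weight into $n\mathfrak{A}^*(\Delta)$ up to sign, and then disposes of the boundary weights — the wall $f(K_{\alpha_0})=2\pi i n$ requiring the separate, type-by-type Lemma \ref{finallemma} ($A_1$, $C_r$, and the rest via the extended Dynkin diagram). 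You instead observe that $R(T)_{\Q}/J_n\cong\Q[\Pi/\phi_n(\Lambda)]$ is a permutation module for $W$, so its sign-isotypic part has dimension equal to the number of orbits whose stabilizers lie in $\ker(\mathrm{sgn})$; identifying $W\ltimes\phi_n(\Lambda)$ with the level-$n$ affine Weyl group (which is Proposition \ref{Inducedhom} plus the observation that $d_\alpha\alpha$ is the image of $\alpha^\vee$ under the basic inner product) makes these exactly the free orbits, parametrized by the interior lattice points of $n\mathfrak{A}^*(\Delta)$, i.e.\ by $M_k$. This is cleaner and uniform in type: the standard fact that point stabilizers in an affine Weyl group are generated by affine reflections (whose linear parts have sign $-1$) is precisely what replaces Lemma \ref{finallemma}, and you should state it explicitly since it carries the whole weight of the wall analysis. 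Two smaller points to fix: the evaluation map at $S_k$ does not surject onto $\Q^{|M_k|}$ (the character values are roots of unity, so $R(G)_{\Q}/I_k(G)$ is a product of number fields); the fact you actually need, $\dim_{\Q}V_k(G)_{\Q}=|M_k|$, is standard and is also simply cited in the paper, so cite it rather than derive it this way. And note that, as in the paper, you only need the upper bound $\dim_{\Q}R(G)_{\Q}/L_n\le|M_k|$, since the inclusion $L_n\subseteq I_k(G)$ already gives the reverse inequality; your exact count (with the pleasant byproduct that $\Psi$ carries your basis to the Weyl characters $\chi_{\sum m_i\omega_i}$) is more than is required but is correct.
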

\begin{proof}[\bf Proof: ] Let $(m_{1},...,m_{r})\in M_{k}$ and 
$t=\frac{1}{n}\left((m_{1}+1)\omega_{1}^{*}+\cdots +
(m_{r}+1)\omega_{r}^{*}\right)$.
Then  the element $\exp(t)\in T$  is such that 
$\delta(\exp(t))\ne 0$. Therefore, via the isomorphism $\Psi$, the rational Verlinde ideal can be 
identified with the vanishing set of $S_{k}$ in 
$(R(T)^{sgn}_{\Q})^{W}$. We denote this vanishing set by $\mathcal{W}_{k}(G)$. 
To prove the proposition we need to show that $\mathcal{W}_{k}(G)=J_{n}^{W}$. For this we follow 
the next steps.

\medskip

\noindent{\bf{Step 1}:} Let us prove that $J_{n}^{W}\subset \mathcal{W}_{k}(G)$. Since 
$J_{n}$ is generated by $\theta_{\alpha_{j}}^{nd_{j}}-1$ it suffices to prove that 
each  $\theta_{\alpha_{j}}^{nd_{j}}-1$ vanishes on $S_{k}$. Indeed, suppose that 
$(m_{1},...,m_{r})\in M_{k}$ and let 
$t=\frac{1}{n}\left((m_{1}+1)\omega_{1}^{*}+\cdots +(m_{r}+1)\omega_{r}^{*}\right)$. Therefore 
\begin{align*}
\theta_{\alpha_{j}}^{nd_{j}}(\exp(t))&=e^{nd_{j}\alpha_{j}
\left(\frac{1}{n}\left((m_{1}+1)\omega_{1}^{*}+\cdots +(m_{r}+1)\omega_{r}^{*}\right)\right)}
=e^{d_{j}\alpha_{j} \left((m_{1}+1)\omega_{1}^{*}+\cdots +(m_{r}+1)\omega_{r}^{*}\right)}\\
&=e^{d_{j}\alpha_{j} \left((m_{j}+1)\omega_{j}^{*}\right)}=e^{2\pi i(m_{j}+1)}=1.
\end{align*}
This proves that $\theta_{\alpha_{j}}^{nd_{j}}-1$ vanishes on $S_{k}$  for all $1\le j\le r$ 
completing the first step.

\medskip

\noindent{\bf{Step 2}:} If $f$ is an element of the weight lattice let us denote by 
$\overline{\A}(\theta_{f})$ the class in $(R(T)^{sgn}_{\Q})^{W}/J_{n}^{W}$ corresponding to 
$\A(\theta_{f})$. Let $\mathcal{B}:=\{\overline{\A}(\theta_{(m_{1}+1)\omega_{1}+\cdots 
+(m_{r}+1)\omega_{r}})\}_{(m_{1},...,m_{r})\in M_{k}}$. As the second step we are going to 
show that the set $\mathcal{B}$ generates $(R(T)^{sgn}_{\Q})^{W}/J_{n}^{W}$  as a vector space over 
$\Q$. If this is true, then it follows that $\dim_{\Q}((R(T)^{sgn}_{\Q})^{W}/J_{n}^{W})\le |M_{k}|$.  
On the other hand, it is known that $V_{k}(G)$ is a free $\Z$-module of rank $|M_{k}|$, 
hence $\dim_{\Q}(V_{k}(G)_{\Q})=|M_{k}|$. 
We conclude that $\dim_{\Q}((R(T)^{sgn}_{\Q})^{W}/J_{n}^{W})\le \dim_{\Q}(V_{k}(G)_{\Q})$.
This together with the fact that $J_{n}^{W}\subset \mathcal{W}_{k}(G)$ proves that 
$J_{n}^{W}=\mathcal{W}_{k}(G)$ completing the proof of the proposition.

To prove step 2 notice that $R(T)_{\Q}=\Q[\Pi]$ and therefore the elements of the form 
$\A(\theta_{f})$ generate $(R(T)^{sgn}_{\Q})^{W}$ as a $\Q$-vector space, where $f$ runs 
through the weight lattice. Therefore it suffices to show that the classes 
$\overline{\A}(\theta_{f})$, for $f$ in the weight lattice, can be generated 
with the collection $\mathcal{B}$. To prove this notice that if $w\in W$ then 
\begin{equation}\label{eqn1}
\A(\theta_{f})=\pm \A(\theta_{w\cdot f}) \text{ for every element } f 
\text{ in the weight lattice.}
\end{equation} 
Next we show that $\theta_{\alpha_{0}}^{n}-1$ belongs to $J_{n}$. 
Equivalently, we will show that $\theta_{\alpha_{0}}^{n} \equiv 1 \ (\text{mod }  J_{n})$.
To see this, recall that $\alpha_{0}=n_{1}\alpha_{1}+\cdots+n_{r}\alpha_{r}
=d_{1}n_{1}^{\vee}\alpha_{1}+\cdots+d_{r}n_{r}^{\vee}\alpha_{r}$. Therefore 
$n\alpha_{0}=nd_{1}n_{1}^{\vee}\alpha_{1}+\cdots+nd_{r}n_{r}^{\vee}\alpha_{r}$. 
Using this and the fact that 
$\theta_{\alpha_{i}}^{nd_{i}} \equiv 1 \ (\text{mod }  J_{n})$ 
we get 
\begin{equation}\label{ideal1}
\theta_{\alpha_{0}}^{n}\equiv \prod_{i=1}^{r}(\theta_{\alpha_{i}}^{nd_{i}})^{n_{i}^{\vee}}
\equiv 1 \ (\text{mod }  J_{n}).
\end{equation} 
On the other hand, given an integer $m$ let 
$s_{\alpha_{0},m}\colon \Hom_{\R}(\t,i\R)\to \Hom_{\R}(\t,i\R)$ 
be the map defined by 
\[
s_{\alpha_{0},m}(f)=f-\left(\frac{f(K_{\alpha_{0}})}{2\pi i}-m\right)\alpha_{0}.
\] 
Geometrically the map $s_{\alpha_{0},m}$ corresponds to a reflection 
in $\Hom_{\R}(\t,i\R)$ with respect to the hyperplane defined by the equation 
$f(K_{\alpha_{0}})=2\pi im$. The reflection $s_{\alpha_{0},m}$  
preserves the weight lattice. Note that $s_{\alpha_{0},0}$ agrees with the reflection 
with respect to the hyperplane defined by $\alpha_{0}=0$ so that 
$s_{\alpha_{0},0}=s_{\alpha_{0}}$ belongs to the Weyl group. Also, 
$s_{\alpha_{0},mn}=s_{\alpha_{0}}+mn\alpha_{0}$ and thus
\[
\theta_{s_{\alpha_{0},mn}(f)}=\theta_{s_{\alpha_{0}}(f)}
\left(\theta_{\alpha_{0}}^{mn}-1\right)+\theta_{s_{\alpha_{0}}(f)}.
\]
Using (\ref{ideal1}) we see that 
$\theta_{s_{\alpha_{0}}(f)} \theta_{\alpha_{0}}^{mn}\equiv 
\theta_{s_{\alpha_{0}}(f)} \ (\text{mod }  J_{n})$.  This 
means that  
$\theta_{s_{\alpha_{0}}(f)} \left(\theta_{\alpha_{0}}^{mn}-1\right)\in J_{n}$ and thus
\[
\A(\theta_{s_{\alpha_{0},nm}(f)})-\A(\theta_{s_{\alpha_{0}}(f)})
=\A\left(\theta_{s_{\alpha_{0}}(f)}
\left(\theta_{\alpha_{0}}^{mn}-1\right)\right)\in J_{n}^{W}.
\]
On the other hand, we have $\A(\theta_{s_{\alpha_{0}}(f)})=-\A(\theta_{f})$.  
Therefore 
\begin{equation}\label{eqn2}
\overline{\A}(\theta_{s_{\alpha_{0},mn}(f)})=-\overline{\A}(\theta_{f})
\text{ for every element } f \text{ in the weight lattice.}
\end{equation}
If $f$ is any element in the weight lattice, we can find a unique $f'\in n\mathfrak{A}^{*}(\Delta)$ 
such that $f$ is obtained from $f'$ by applying reflections of the form $s_{\alpha_{0},mn}$ 
and acting by elements of the Weyl group. Using  (\ref{eqn1}) and (\ref{eqn2}) we conclude that 
$\overline{\A}(\theta_{f})=\pm\overline{\A}(\theta_{f'})$. This proves that if $f$ is any element 
in the weight lattice we can find a weight $f'\in n\mathfrak{A}^{*}(\Delta)$ such that 
$\overline{\A}(\theta_{f})=\pm \overline{\A}(\theta_{f'})$. To finish the proof we are going to 
show that if $f$ belongs to any of the walls of  $n\mathfrak{A}^{*}(\Delta)$ then 
$\overline{\A}(\theta_{f})=0$. If $f$ belongs to any of the walls of  
$n\mathfrak{A}^{*}(\Delta)$  that contains $0$, then 
$f$ is fixed by a reflection of the form $s_{\alpha}$ for some simple root $\alpha$. 
Let $M$ be a set of representatives of the left cosets of $W$ with respect to the 
subgroup $\{1,s_{\alpha}\}$. Therefore 
\begin{align*}
\A(\theta_{f})&=\sum_{w\in W}(-1)^{\ell(w)}\theta_{w\cdot f}
=\sum_{w\in M}(-1)^{\ell(w)}\theta_{w\cdot f}+\sum_{w\in M}(-1)^{\ell(ws_{\alpha})}
\theta_{w\cdot (s_{\alpha}\cdot f)}\\
&=\sum_{w\in M}(-1)^{\ell(w)}\theta_{w\cdot f}-\sum_{w\in M}(-1)^{\ell(w)}\theta_{w\cdot f}=0. 
\end{align*}
We conclude that if $f$ is fixed by a reflection of the form $s_{\alpha}$ then $\A(\theta_{f})=0$.
The only possibility left is that 
$f$ belongs to the wall in $n\mathfrak{A}^{*}(\Delta)$ that does not contain $0$ so 
that $f(K_{\alpha_{0}})=2\pi in$. By Lemma \ref{finallemma} below we conclude that 
in this case $\A(\theta_{f})\in J_{n}^{W}$ so that $\overline{\A}(\theta_{f})=0$. 
This finishes the proof. \qedhere
\end{proof}

\medskip

\begin{lemma}\label{finallemma} Let $G$ be as in the previous proposition and $n=k+h^{\vee}$ 
with $k\ge 0$. If $f\in \Hom_{\R}(\t,i\R)$ is such that  $f(K_{\alpha_{0}})=2\pi in$ then 
$\A(\theta_{f})\in J_{n}^{W}$.
\end{lemma}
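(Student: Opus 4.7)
The plan is to specialize the modular identity already established in Step 2 of Proposition \ref{Verlinde} to the particular weight $f$ lying on the ``top'' wall of $n\mathfrak{A}^{*}(\Delta)$. Recall from that step the congruence $\overline{\A}(\theta_{s_{\alpha_{0},mn}(f)}) \equiv -\overline{\A}(\theta_{f})$ in $(R(T)^{sgn}_{\Q})^{W}/J_{n}^{W}$, valid for every weight $f$ and every integer $m$. Its derivation used only the Koszul-type identity $\theta_{\alpha_{0}}^{n}\equiv 1 \pmod{J_{n}}$ from equation (\ref{ideal1}), the affine decomposition $s_{\alpha_{0},mn}(f)=s_{\alpha_{0}}(f)+mn\alpha_{0}$, and the fact that $s_{\alpha_{0}}$ is a reflection, so invoking it here to prove the lemma is not circular.

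First I would record the elementary geometric observation that the hypothesis $f(K_{\alpha_{0}})=2\pi i n$ is exactly the statement that $f$ is fixed by the affine reflection $s_{\alpha_{0},n}$: indeed, from the definition of $s_{\alpha_{0},n}$ the displacement $s_{\alpha_{0},n}(f)-f$ is a scalar multiple of $\alpha_{0}$ whose coefficient $(f(K_{\alpha_{0}})/(2\pi i))-n$ vanishes precisely under the hypothesis. Next I would apply the above congruence with $m=1$ to this particular $f$, which immediately gives $\overline{\A}(\theta_{f})=\overline{\A}(\theta_{s_{\alpha_{0},n}(f)}) \equiv -\overline{\A}(\theta_{f})$, so $2\overline{\A}(\theta_{f})=0$ in $(R(T)^{sgn}_{\Q})^{W}/J_{n}^{W}$. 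Since this quotient is a $\Q$-vector space, division by $2$ is legitimate and one concludes $\overline{\A}(\theta_{f})=0$, i.e.\ $\A(\theta_{f})\in J_{n}^{W}$, which is the claim.

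There is essentially no substantive obstacle, since the lemma is really just the $m=1$ specialization of the machinery assembled in the proof of the proposition; the hypothesis on $f$ is engineered precisely so that the affine reflection $s_{\alpha_{0},n}$ fixes $f$ instead of moving it. The one bookkeeping point worth verifying explicitly is that $J_{n}$ is $W$-stable, so that antisymmetrizing an element of $J_{n}$ produces an element of $J_{n}^{W}$; this holds because $J_{n}$ is generated by $\phi_{n}(\lambda)-1$ for $\lambda \in \Lambda$ and the unit lattice $\Lambda$ is permuted by $W$, so the generating set is $W$-stable up to the sign twist that defines $R(T)^{sgn}$.
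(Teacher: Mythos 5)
Your proof is correct, and it takes a genuinely different and more uniform route than the paper's. The paper proves the lemma by a three–case analysis: explicit computations for type $A_{1}$ and type $C_{r}$, and, for every other group, an inspection of the extended Dynkin diagram to produce a simple root $\alpha_{j}$ joined to the affine node by a single bond, so that the translate $g=f+nd_{j}\alpha_{j}$ lies on the linear wall $\alpha_{0}=0$, where $\A(\theta_{g})=0$; one then transfers back to $f$ modulo $J_{n}$ via $\theta_{g}=\theta_{f}(\theta_{\alpha_{j}}^{nd_{j}}-1)+\theta_{f}$. You replace all of this by the single observation that the hypothesis $f(K_{\alpha_{0}})=2\pi i n$ means $f$ is fixed by the affine reflection $s_{\alpha_{0},n}$, so that (\ref{eqn2}) with $m=1$ --- equivalently, the identity $\theta_{f}-\theta_{s_{\alpha_{0}}(f)}=\theta_{s_{\alpha_{0}}(f)}\left(\theta_{\alpha_{0}}^{n}-1\right)\in J_{n}$ from (\ref{ideal1}) combined with $\A(\theta_{s_{\alpha_{0}}(f)})=-\A(\theta_{f})$ --- gives $2\A(\theta_{f})\in J_{n}^{W}$, and division by $2$ is legitimate over $\Q$. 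There is no circularity: (\ref{ideal1}) and (\ref{eqn2}) are established in Step 2 of Proposition \ref{Verlinde} before the lemma is invoked and make no use of it. What your approach buys is uniformity (no type-by-type inspection of root systems) and conceptual clarity: the lemma becomes the affine-wall analogue of the vanishing of $\A(\theta_{f})$ on the linear walls. In fact the paper's Cases 1 and 2 are special instances of exactly your computation (both conclude via ``$2\A(\cdot)\in J_{n}^{W}$, hence $\A(\cdot)\in J_{n}^{W}$''), and only its Case 3 avoids the division by $2$ --- which is immaterial here since the lemma is stated and used only rationally. Your closing check that $J_{n}$ is $W$-stable (so that $\A$ carries $J_{n}$ into $J_{n}^{W}$) is the right bookkeeping point, and your justification via the $W$-equivariance of $\phi_{n}$ and the $W$-stability of $\Lambda$ is correct.
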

\begin{proof}[\bf Proof: ] 
To prove this lemma we need to consider the following cases:

\medskip

\noindent{\bf{Case 1}:} $G=SU(2)$. In this case $G$ is of type $A_{1}$ and $h^{\vee}=2$.
Fix some level $k\ge 0$ and let $n=k+h^{\vee}=k+2$. Consider the maximal torus
\[
T=\left.\left\{\left[ 
\begin{array}{cc}
x_{1} & 0   \\ 
0 & x_{2}  
\end{array}
\right] \right| x_{1}, x_{2}\in \SS^{1},\ x_{1}x_{2}=1\right\}.
\]
We have $W=\Z/2=\{1,\tau\}$ with $\tau^{2}=1$. The element $\tau$ acts by permuting the 
diagonal entries of elements in $T$. The Lie algebra $\t$ is given by 
\[
\t=\left.\left\{\left[ 
\begin{array}{cc}
X_{1} &0  \\ 
0 & X_{2} 
\end{array}
\right] \right| X_{1},X_{2}\in i\R, \ X_{1}+X_{2}=0\right\}.
\]
Let $\Delta=\{\alpha_{1}=X_{1}-X_{2}=2X_{1}\}$ 
be our choice of simple roots. The fundamental weight in this example  is $\omega_{1}=X_{1}$ 
and the highest weight is $\alpha_{0}=\alpha_{1}$.
The representation ring with rational coefficients is 
$R(T)_{\Q}=\Q[x_{1},x_{2}]/(x_{1}x_{2}=1)$ and the action of
$W$ on $R(T)_{\Q}$ permutes $x_{1}$ and $x_{2}$. If $p(x_{1},x_{2})\in R(T)_{\Q}^{sgn}$, 
then $\tau\bullet p(x_{1},x_{2})=-p(x_{2},x_{1})$. In this case $J_{n}$ is the ideal in 
$R(T)_{\Q}^{sgn}$ generated by $\theta^{n}_{\alpha_{1}}-1=\frac{x_{1}^{n}}{x_{2}^{n}}-1$ 
so that $x_{2}^{n}(\theta^{n}_{\alpha_{1}}-1)=x_{1}^{n}-x_{2}^{n}\in J_{n}$. Moreover, 
$x_{1}^{n}-x_{2}^{n}\in J_{n}^{W}$ as  $\tau\bullet(x_{1}^{n}-x_{2}^{n})=x_{1}^{n}-x_{2}^{n}$. 
The only weight that belongs to the wall $n\mathfrak{A}^{*}(\Delta)$ defined by the equation 
$f(K_{\alpha_{0}})=2\pi in$ is $f=n\omega_{1}$. Note that  
$\A(\theta_{n\omega_{1}})=x_{1}^{n}-x_{2}^{n}\in J_{n}^{W}$, as we wanted to prove.

\medskip

\noindent{\bf{Case 2}:}  $G=Sp(r)$ with $r \geq 2$. In this case $G$ is a 
simple Lie group of type $C_{r}$ and $h^{\vee}=r+1$. Fix some level $k\ge 0$
and let $n=k+h^{\vee}=k+r+1$. Consider the maximal torus in $G$ consisting 
of all diagonal matrices in $Sp(n)$. Here we see $Sp(n)$ as a subgroup of $U(2n)$ in the usual 
way. Let $\Delta=\{\alpha_{1}=X_{1}-X_{2},\dots, \alpha_{r-1}=X_{r-1}-X_{r}, \alpha_{r}=2X_{r}\}$ 
be our choice of simple roots. With this choice of simple roots the highest root is 
$\alpha_{0}=2X_{1}=2\alpha_{1}+\cdots +2\alpha_{r-1}+\alpha_{r}$ so that 
$n_{1}=2,\dots, n_{r-1}=2, n_{r}=1$. Also, $n_{1}^{\vee}=n_{2}^{\vee}=\dots=n_{r}^{\vee}=1$ and 
$d_{1}=2,\dots, d_{r-1}=2, d_{r}=1$. The fundamental weights are 
\[
\omega_{1}=X_{1},\  \omega_{2}=X_{1}+X_{2},\ \dots, \omega_{r}=X_{1}+\cdots +X_{r}. 
\]
In this case $J_{n}$ 
is the ideal in $R(T)_{\Q}^{sgn}=\Q[x_{1}^{\pm 1},\dots,x_{n}^{\pm 1}]$ generated by 
\[
\theta_{\alpha_{1}}^{nd_{1}}-1=\frac{x_{1}^{2n}}
{x_{2}^{2n}}-1,\dots, \theta_{\alpha_{r-1}}^{nd_{r-1}}-1=\frac{x_{r-1}^{2n}}
{x_{r}^{2n}}-1 \text{ and } \theta_{\alpha_{r}}^{nd_{r}}-1=x_{r}^{2n}-1.
\] 
Therefore $J_{n}$ 
is also generated by $x_{1}^{2n}-1, x_{2}^{2n}-1,\dots, x_{r}^{2n}-1$. Suppose that 
$f$ is an element in the wall in $n\mathfrak{A}^{*}(\Delta)$ that does not contain $0$. 
Therefore $f$ can be written in the form $f=s_{1}\omega_{1}+\cdots+s_{r}\omega_{r}$, where 
$s_{1},\dots, s_{r}\ge 0$ are integers such that $s_{1}+\cdots +s_{r}=n=k+h^{\vee}$. 
With this notation we have 
\[
\theta_{f}=\theta_{\omega_{1}}^{s_{1}}\cdots\theta_{\omega_{r}}^{s_{r}}
=x_{1}^{a_{1}}x_{2}^{a_{2}}\cdots x_{r}^{a_{r}},
\] 
where $a_{1}=n=s_{1}+s_{2}+\cdots +s_{r}\ge a_{2}=s_{2}+\cdots +s_{r}\ge\cdots\ge a_{r}=s_{r}\ge 0$.
Note that 
\[
x_{1}^{a_{1}}x_{2}^{a_{2}}\cdots x_{r}^{a_{r}}-x_{1}^{-a_{1}}x_{2}^{a_{2}}\cdots x_{r}^{a_{r}}
=x_{1}^{-a_{1}}x_{2}^{a_{2}}\cdots x_{r}^{a_{r}}(x_{1}^{2n}-1)\in J_{n}.
\]
Thus $\A(x_{1}^{a_{1}}x_{2}^{a_{2}}\cdots x_{r}^{a_{r}})-\A(x_{1}^{-a_{1}}x_{2}^{a_{2}}
\cdots x_{r}^{a_{r}})\in J_{n}^{W}$. 
Also, $x_{1}^{-a_{1}}x_{2}^{a_{2}}\cdots x_{r}^{a_{r}}
=s_{\alpha}\cdot x_{1}^{a_{1}}x_{2}^{a_{2}}\cdots x_{r}^{a_{r}}$ with 
$\alpha=2X_{1}$. This implies that 
\[
\A(x_{1}^{a_{1}}x_{2}^{a_{2}}\cdots x_{r}^{a_{r}})=
-\A(x_{1}^{-a_{1}}x_{2}^{a_{2}}\cdots x_{r}^{a_{r}})
\] 
 We conclude that $2\A(x_{1}^{a_{1}}x_{2}^{a_{2}}\cdots x_{r}^{a_{r}})\in J_{n}^{W}$ and thus 
$\A(x_{1}^{a_{1}}x_{2}^{a_{2}}\cdots x_{r}^{a_{r}})\in J_{n}^{W}$. 
This finishes the proof in this case. 
\medskip

\noindent{\bf{Case 3}:}  Finally suppose that the group $G$ is not of type $A_{1}$ 
or of type $C_{r}$. Under this assumption we can find some $1\le j\le r$ 
(that we fix from now on) such that 
\[
2\frac{B(h_{\alpha_{j}},h_{\alpha_{0}})}{B(h_{\alpha_{j}},h_{\alpha_{j}})}=-1.
\] 
The existence of such $\alpha_{j}$ can be verified by inspecting the extended Dynkin diagram 
associated to the root system of $G$. For the groups considered in this case we can find 
some vertex corresponding to a simple root $\alpha_{j}$ that is connected by a single edge 
to the vertex corresponding to $\alpha_{0}$. This implies that the angle between 
$h_{\alpha_{0}}$ and $h_{\alpha_{j}}$ is $2\pi/3$ and therefore  
$2\frac{B(h_{\alpha_{j}},h_{\alpha_{0}})}{B(h_{\alpha_{j}},h_{\alpha_{j}})}=-1.$ Suppose that 
$f$ is an element in the weight lattice that belongs to the hyperplane defined by the equation 
$f(K_{\alpha_{0}})=2\pi in$ and let $g=f+nd_{j}\alpha_{j}$, which belongs to the weight lattice. 
The element $g$ is defined in such a way that 
\begin{align*}
g(K_{\alpha_{0}})&=f(K_{\alpha_{0}})+nd_{j}\alpha_{j}(K_{\alpha_{0}})
=2\pi in+2 \pi i nd_{j}\left( 2\frac{B(h_{\alpha_{j}},h_{\alpha_{0}})}
{B(h_{\alpha_{0}},h_{\alpha_{0}})}\right)\\
&=2\pi in+2 \pi i n\left(\frac{B(h_{\alpha_{0}},h_{\alpha_{0}})}
{B(h_{\alpha_{j}},h_{\alpha_{j}})}\right)
\left( 2\frac{B(h_{\alpha_{j}},h_{\alpha_{0}})}{B(h_{\alpha_{0}},h_{\alpha_{0}})}\right)\\
&=2\pi in+2 \pi i n\left( 2\frac{B(h_{\alpha_{j}},h_{\alpha_{0}})}
{B(h_{\alpha_{j}},h_{\alpha_{j}})}\right)
=2\pi in-2\pi in=0.
\end{align*}
Let $s_{\alpha_{0}}$ be the reflection in $W$ that corresponds to the root $\alpha_{0}$. Since 
$g(K_{\alpha_{0}})=0$ we have that $s_{\alpha_{0}}\cdot g=g$. By the same argument provided above 
we conclude that Hence $\A(\theta_{g})=0$. 
On the other hand, notice that $(\theta^{nd_{j}}_{\alpha_{j}}-1)$ belongs to
$J_{n}$ and so $\A(\theta_{f}(\theta^{nd_{j}}_{\alpha_{j}}-1))\in J_{n}^{W}$. Since 
$\theta_{g}=\theta_{f}(\theta^{nd_{j}}_{\alpha_{j}}-1)+\theta_{f}$,
we conclude that 
$$0=\A(\theta_{g})=\A(\theta_{f}(\theta^{nd_{j}}_{\alpha_{j}}-1))+\A(\theta_{f})$$
and therefore
$$\A(\theta_{f})=-\A(\theta_{f}(\theta^{nd_{j}}_{\alpha_{j}}-1))\in J_{n}^{W}.$$ \qedhere
\end{proof}

\end{document}